\theoremstyle{definition}
\newtheorem{thm}{\bf Theorem}[section]
\newtheorem*{thm*}{\bf Theorem}
\newtheorem{prop}[thm]{\bf Proposition}
\newtheorem{lem}[thm]{Lemma}
\newtheorem*{lem*}{Lemma}
\newtheorem{cor}[thm]{Corollary}
\newtheorem*{cor*}{Corollary}
\newtheorem*{defn}{\bf{Definition}}
\newtheorem{example}{Example}
\theoremstyle{remark}
\newtheorem*{rem}{\bf{Remark}}
\newcommand{\AAA}{\mathbb{A}}
\newcommand{\BB}{\mathbb{B}}
\newcommand{\FF}{\mathbb{F}}
\newcommand{\GG}{\mathbb{G}}
\newcommand{\NN}{\mathbb{N}}
\newcommand{\PP}{\mathbb{P}}
\newcommand{\ZZ}{\mathbb{Z}}
\newcommand{\fraka}{\mathfrak a}
\newcommand{\frakb}{\mathfrak b}
\newcommand{\frakm}{\mathfrak m}
\newcommand{\frakp}{\mathfrak p}
\newcommand{\frakS}{\mathfrak S}
\newcommand{\calC}{\mathcal C}
\newcommand{\calO}{\mathcal O}
\newcommand{\Hom}{\mathop{\mathrm{Hom}}\nolimits}
\newcommand{\Spec}{\mathop{\mathrm{Spec}}\nolimits}
\newcommand{\ia}{\fraka}
\newcommand{\ib}{\frakb}
\newcommand{\ip}{\frakp}
\newcommand{\im}{\frakm}
\newcommand{\twovect}[2]%
{\ensuremath{\left( \begin{array}{c}
 #1 \\
 #2 
\end{array} \right) }}
\newcommand{\thrvect}[3]%
{\ensuremath{\left( \begin{array}{c}
 #1 \\
 #2 \\
 #3
\end{array} \right) }}
\newcommand{\fouvect}[4]%
{\ensuremath{\left( \begin{array}{c}
 #1 \\
 #2 \\
 #3 \\
 #4
\end{array} \right)}}
\newcommand{\thrlen}[3]%
{\ensuremath{\left|| \begin{array}{c}
 #1 \\
 #2 \\
 #3
\end{array} \right|| }}
\newcommand{\twonude}[2]%
{\ensuremath{ \begin{array}{c}
 #1 \\
 #2 
\end{array}  }}
\newcommand{\thrnude}[3]%
{\ensuremath{ \begin{array}{c}
 #1 \\
 #2 \\
 #3
\end{array} }}
\newcommand{\founude}[4]%
{\ensuremath{ \begin{array}{c}
 #1 \\
 #2 \\
 #3 \\
 #4
\end{array} }}
\newcommand{\twomat}[4]%
{\ensuremath{\left( \begin{array}{cc}
 #1 & #2\\
 #3 & #4
\end{array} \right) }}
\newcommand{\thrmat}[9]%
{\ensuremath{\left( \begin{array}{ccc}
 #1 & #2 & #3\\
 #4 & #5 & #6\\
 #7 & #8 & #9
\end{array} \right) }}
\newcommand{\twodet}[4]%
{\ensuremath{\left| \begin{array}{cc}
#1 & #2\\
#3 & #4
\end{array} \right|}}
\newcommand{\thrdet}[9]%
{\ensuremath{\left| \begin{array}{ccc}
 #1 & #2 & #3\\
 #4 & #5 & #6\\
 #7 & #8 & #9
\end{array} \right| }}
\newcommand{\pmag}{{\mathcal{P\!M}ag}}
\newcommand{\pmon}{{\mathcal{P\!M}on}}
\newcommand{\pring}{{\mathcal{P\!R}ing}}
\newcommand{\sets}{{\mathcal{S}et}}
\newcommand{\abmon}{{\mathcal{A}b\mathcal{M}on}}
\newcommand{\grp}{{\mathcal{G}rp}}
\newcommand{\alg}{{\mathcal{A}lg}}
\newcommand{\pgrp}{\mathcal{PG}rp}
\newcommand{\fun}{\FF_1}
\newcommand{\GGLL}{{\mathbb{G}\mathbb{L}}}
\newcommand{\dslash}{/\!\!/}
\newcommand{\angles}[1]{\langle {#1} \rangle}
\title{Partially additive rings \\
and group schemes over $\fun$}
\author{Shingo Okuyama}
\email{okuyama@di.kagawa-nct.ac.jp}
\address{National Institute of Technology, Kagawa Colledge}
\begin{document}

\begin{abstract}
We develop an elementary theory of partially additive rings as a foundation of ${\mathbb F}_1$-geometry. 
Our approach is so concrete that an analog of classical algebraic geometry is established very straightforwardly. 
As applications, (1) we construct a kind of group scheme ${\GGLL}_n$ whose value at a commutative ring 
$R$ is the group of $n\times n$ invertible matrices over $R$ and at ${\mathbb F}_1$ is the $n$-th symmetric group, 
and (2) we construct a projective space $\mathbb P^n$ as a kind of scheme and count the number of points of 
${\mathbb P}^n({\mathbb F}_q)$ for $q=1$ or $q=p^n$ a power of a rational prime, then we explain a reason of 
number 1 in the subscript of ${\mathbb F}_1$ even though it has two elements. 
\end{abstract}
\maketitle
\tableofcontents
\section{Introduction}

It seems that the notion of a so-called field with one element was first proposed by J. Tits\cite{tits-sur-les-analogues-algebriques}. 
There have been many attempts to define an algebraic geometry over $\fun.$

We start from a partially additive algebraic system, partial monoids. We impose
strict associativity on partial monoids, in the sense of 
G. Segal\cite{segal-configuration-spaces-and},
who used the topological version of this structure in the study of configuration spaces.
Then we define a partial ring as a partial monoid equipped with a binary commutative, associative
multiplication with unity.  Our approach is so concrete that we can establish
an analog of the classical theory of schemes based on commutative rings with unity 
and define so-called partial schemes, which are locally partial-ringed spaces that are 
isomorphic to an affine partial scheme in an open neighborhood of each point.

Among others, rather concrete constructions of $\fun$-geometry from a
(partial) algebraic systems are, Deitmar\cite{deitmar-schemes-over-f1},
Deitmar\cite{deitmar-congruence-schemes} and 
Lorscheid\cite{lorscheid-the-geometry-of-1}.
In particular, the latter two treat the partially additive algebraic system, so 
our approach resembles their approach most. Indeed the category of
partial rings is embedded in the category of blueprints.

The main outcome of our construction is that we can describe a group valued functor
$\GGLL_n$ by a partial group object in the category of partial rings,
which means that we have constructed an affine partial group partial scheme.
When this functor is applied to good partial rings, it takes values in the category of groups.
For example, if $A$ is a commutative ring with unity, then $\GGLL_n(A)$ is nothing but
the general linear group of $n\times n$ matrices, and if $A=\fun, $ then 
$\GGLL_n(\fun) =\frakS_n$ is the $n$-th symmetric group.

Another modest outcome of our approach is that we have an explanation
why the number 1 is put to the notation of our field even though it has two elements.	
Namely, we say that the number is there since
we have only one element which can be added to 1 in the field 
$\fun = \set{0,1},$ while in the usual finite field $\FF_q,$ there are $q$ of such elements.
(See Example \ref{ex:projective}.)
In this paper, $\NN$ denotes the set of non-negative integers.

{\it Acknowledgement.}
Conversations with Bastiaan Cnossen about tensor products of partial monoids
are very useful for the author. Indeed, the associative closure and the tensor product which
appear in this article are contained in his argument \cite{bastiaan-cnossen-master-thesis},
explicitly or implicitly.
Yoshifumi Tsuchimoto suggested to the author that search for a group scheme in
$\fun$-geometry is important.
Bastiaan Cnossen, 
Katsuhiko Kuribayashi, 
Makoto Masumoto, Shuhei Masumoto, 
Jun-ichi Matsuzawa,
Kazunori Nakamoto and Yasuhiro Omoda gave valuable comments on previous versions
of this article. The author is grateful to these people.

\section{Additive Part}
\subsection{Partial Magmas and Monoids}
\begin{defn}[partial magma and partial monoid]
	A (commutative and unital) {\bf partial magma} is 
	\begin{enumerate}
		\item a set $A,$ with a distinguished element $0,$
		\item a subset $A_2 $ of $A\times A,$
		\item a map $+\colon A_2 \to A,$
	\end{enumerate}
	such that
	\begin{enumerate}[label= (\alph*)]
		\item $(0,a)\in A_2, (a,0) \in A_2$ and $a+0 = a = 0+a,$ for all $a\in A,$
		\item if $(a,b) \in A_2$ then $(b,a) \in A_2$ and $a+b = b+a,$ for all $a, b \in A.$
	\end{enumerate}

	If, moreover, it satisfies the following condition, we say that it is a  (commutative and unital) {\bf partial monoid} :
	\begin{enumerate}[label= (\alph*)]
		\setcounter{enumi}{2}
		\item $(a,b), (a+b,c) \in A_2$ if and only if $(b,c), (a,b+c)\in A_2$  for all $a, b, c \in A$
		and in such a case, $(a+b)+c = a+(b+c).$
	\end{enumerate}
\end{defn}

	If, instead, a partial magma satisfies the following condition, we say that it is a  (commutative and unital) {\bf weak partial monoid} (or
	weakly associative partial magma) :
	\begin{enumerate}[label= (\alph*)]
		\setcounter{enumi}{3}
		\item If $a_1 + \dots + a_r$ can be calculated in $A$ under supplement of parenthesis in more than or equal to two ways,
		then the results are equal for all $r\in \NN$ and $a_1, \dots, a_r \in A.$
	\end{enumerate}

\begin{defn}
	Let $A$ and $B$ be partial magmas. A map $f\colon A\to B$ is called a {\bf homomorphism} if
	$f(0) = 0, (f\times f)(A_2) \subseteq B_2$ and $f(a_1 + a_2) = f(a_1) + f(a_2)$ for all $(a_1, a_2) \in A_2.$
	If $A$ and $B$ are partial monoids, a map $A\to B$ is a {\bf homomorphism} if it is a homomorphism of partial magmas.
	The category of partial magmas and partial monoids are denoted by $\pmag$ and $\pmon,$
	respectively.
\end{defn}

\begin{example}
	A partial magma of order 1 is isomorphic to $0 = \Set{0},$ which is a partial monoid.
	A partial magma of order	2 is isomorphic to one of the following :
	\begin{enumerate} 
		\item $\FF_1 = \Set{0,1}$ where $1+1$ is undefined,
		\item $\FF_2 = \Set{0,1}$ where $1+1 = 0$ and
		\item $\BB = \Set{0,1}$ where $1+1 = 1.$
	\end{enumerate}
	These are also partial monoids.
\end{example}

\begin{example}
	Any based set $(X,0)$ can be given a partial monoid structure by putting $X_2 = \Set{0} \times X \cup X\times \Set{0}.$
	A homomorphism between such partial monoids is nothing but a based map between the given based sets. 
\end{example}

\begin{example}
	Any abelian monoid $M$ can be given a partial monoid structure by putting $M_2 = M\times M.$
	A homomorphism between such partial monoids is nothing but a homomorphism between 
	the given abelian monoids. 
\end{example}

These examples show that we can embed a category of based sets $\sets_0$ and that of abelian monoids 
$\abmon$ into $\pmon.$ In this paper, 
based sets and abelian monoids (and abelian groups) are always considered as partial monoids unless otherwise specified.

It is easily shown that a homomorphism $f\colon A\to B$ is a monomorphism 
in the category of partial monoids if and only if it is an injective map of
underlying sets. When $A$ and $B$ are partial magmas, we say that $A$ is contained in $B$ 
to mean that $A$ is a partial submagma of $B.$
Remark that a homomorphism $f\colon A\to B$ is an epimorphism if it is a surjective map of
underlying sets, but the converse is false.  For example, the map $\fun \to \NN$ determined by $1\mapsto 1$
is a monomorphism and epimorphism, but not an isomorphism.

\subsection{Monoid completion}

In this section, we show that for any partial magma $A,$ there exists 
a monoid $A_{mon}$ and a homomorphism $\mu\colon A\to A_{mon}$
which is universal among the homomorphisms from $A$ to abelian monoids.

Let $A$ be a partial magma and $\NN[A]$ be the free abelian monoid
generated by the underlying set of $A.$ More precisely,
\[
	\NN[A] = \Set{ a_1 \dotplus \dots \dotplus a_r | r\in \NN, a_i \in A\,(1\leq i\leq r)},
\]
where the empty sum is the unit. 
By the injective homomorphism $A \to \NN[A]~;~a\mapsto a$ of partial magmas,
we regard $A$ as a partial submagma of $\NN[A].$
Let $\sim$ be the equivalence relation on $\NN[A]$ generated by 
$0\dotplus x\sim x$ and $(a_1+a_2) \dotplus x \sim a_1\dotplus a_2\dotplus x,$
where $x$ is any element of $\NN[A]$ and $(a_1, a_2) \in A_2.$
We put $A_{mon} = \NN[A]/\sim.$ Since $\sim$ is an additive equivalence relation, $A_{mon}$ has a
monoid structure such that the projection $\NN[A]\to A_{mon}$ is a homomorphism of monoids.
The composite $A\to \NN[A] \to A_{mon}$ is denoted by $\mu\colon A\to A_{mon}.$

\begin{prop}
	Let $A$ be a partial magma and $f\colon A\to B$ be a homomorphism to an abelian monoid $B.$
	Then there exists a unique homomorphism $f_{mon}\colon A_{mon} \to B$
	such that $f_{mon}\circ \mu = f.$	
\end{prop}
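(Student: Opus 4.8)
The plan is to build $f_{mon}$ by first extending the underlying set map of $f$ over the free abelian monoid $\NN[A]$ and then checking that this extension descends to the quotient $A_{mon} = \NN[A]/\!\sim$. Since $\NN[A]$ is freely generated as an abelian monoid by the underlying set of $A$, the set map $a \mapsto f(a)$ extends uniquely to a monoid homomorphism $\tilde f \colon \NN[A] \to B$, necessarily given by $\tilde f(a_1 \dotplus \dots \dotplus a_r) = f(a_1) + \dots + f(a_r)$, with the empty sum sent to the unit of $B$. This $\tilde f$ is the only candidate through which $f_{mon}$ can factor, so the uniqueness clause will come essentially for free once descent is established.

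Next I would verify that $\tilde f$ respects the two families of generating relations of $\sim$. For the first, $\tilde f(0 \dotplus x) = f(0) + \tilde f(x) = 0 + \tilde f(x) = \tilde f(x)$, using that $f$ is a homomorphism of partial magmas, so $f(0) = 0$. For the second, with $(a_1, a_2) \in A_2$, the homomorphism property $f(a_1 + a_2) = f(a_1) + f(a_2)$ gives $\tilde f\bigl((a_1+a_2) \dotplus x\bigr) = f(a_1 + a_2) + \tilde f(x) = f(a_1) + f(a_2) + \tilde f(x) = \tilde f(a_1 \dotplus a_2 \dotplus x)$. Thus $\tilde f$ identifies every pair that generates $\sim$.

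To pass from the generators to all of $\sim$, I would observe that the relation identifying $u$ and $v$ whenever $\tilde f(u) = \tilde f(v)$ is a congruence on $\NN[A]$ — an equivalence relation compatible with $\dotplus$, since $\tilde f$ is a monoid homomorphism. As $\sim$ is the smallest additive equivalence relation containing the generators noted above, and this kernel congruence already contains those same generators, every $\sim$-equivalent pair is identified by $\tilde f$. Hence $\tilde f$ is constant on $\sim$-classes and factors uniquely as $\tilde f = f_{mon} \circ \pi$ through the projection $\pi \colon \NN[A] \to A_{mon}$, which defines $f_{mon}$ as a monoid homomorphism. Composing with the inclusion $A \hookrightarrow \NN[A]$ yields $f_{mon} \circ \mu = f$, and uniqueness of $f_{mon}$ follows because $A_{mon}$ is generated as a monoid by the image $\mu(A)$, on which any solution must agree with $f$.

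The argument is essentially routine; the one point demanding care is the step where respecting the generating relations is upgraded to respecting all of $\sim$. The cleanest way to handle this is to note that the stated generators are already closed under adding an arbitrary element $x$ (that is the role of their $\dotplus x$ form), so the equivalence relation they generate is automatically additive — exactly the fact recorded when $A_{mon}$ was constructed — and then to compare it with the kernel congruence of $\tilde f$ rather than attempt to track chains of elementary equivalences by hand.
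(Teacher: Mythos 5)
Your proof is correct and takes essentially the same route as the paper: the paper simply defines $f_{mon}([a_1\dotplus \dots \dotplus a_r]) = f(a_1) + \dots + f(a_r)$ and asserts it works, which is exactly the map you obtain by factoring through $\NN[A]$. Your version merely makes explicit the well-definedness check (the generators of $\sim$ lie in the kernel relation of $\tilde f$) that the paper leaves implicit.
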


\begin{proof}
	We put $f_{mon}([a_1\dotplus \dots \dotplus a_r]) = f(a_1) + \dots + f(a_r),$ then we have a homomorphism
	$f_{mon} \colon A_{mon} \to B$ such that $f_{mon}\circ \mu = f,$ which is unique.
\end{proof}

Remark that $\mu$ is not necessarily a monomorphism. Indeed, $\mu(A)$ has a natural structure of 
partial submagma of $A_{mon}$ which is weakly associative and is denoted by $A_{wass}.$

\subsection{Associative closure}

In this section, we show that for any partial magma $A,$ there exists 
a partial monoid $A_{ass}$ and a homomorphism $\alpha\colon A\to A_{ass}$
which is universal among the homomorphisms from $A$ to partial monoids.

Let $A$ be a partial magma and $\Set{B^{(\lambda)}}$ be a family of partial submagmas of $A.$
If we put 
\[ B = \cap_\lambda B^{(\lambda)} \mbox{~and~} B_2 = \cap_\lambda B^{(\lambda)}_2 \]
then $B$ is the largest partial submagma of $A$ which is contained in every $B^{(\lambda)}$'s.
If all the $B^{(\lambda)}$'s are partial submonoids of $A,$ then $B$ is also a partial submonoid of $A.$

On the other hand, let $A$ be a partial magma and $\Set{B^{(\lambda)}}$ be a family of 
partial submagmas of $A$ which is totally ordered by containment.
If we put 
\[ B = \cup_\lambda B^{(\lambda)} \mbox{~and~} B_2 = \cup_\lambda B^{(\lambda)}_2 \]
then $B$ is the smallest partial submagma of $A$ which contains every $B^{(\lambda)}$'s.
If all the $B^{(\lambda)}$'s are partial submonoids of $A,$ then $B$ is also a partial submonoid of $A.$

Let $A$ be a partial monoid and $B$ be a partial submagma of $A.$ We define $B_{ass, A}$ to be
the smallest partial submonoid of $A$ which contains $B.$ We call $B_{ass,A}$ the associative
closure of $B$ in $A.$ $B_{ass, A}$ can be constructed inductively as follows:

Put $B^{(0)} = B$ and $B^{(0)}_2 = B_2.$
Suppose we have constructed a partial submagma $B^{(n-1)}\subseteq A.$
Consider a condition
\[
	(*)~~~(a+b) + c \mbox{~can be calculated in~} B^{(n-1)}
\]
for a triple $(a,b,c) \in B^{(n-1)}\times  B^{(n-1)}\times  B^{(n-1)}.$
If we put
\begin{align*}
	B^{(n)} &= B^{(n-1)} \cup \Set{ b+c | (a,b,c) \mbox{~satisfies~}(*) }\mbox{~and~}\\
	B^{(n)}_2 &=  B^{(n-1)}_2\cup (\Set{0}\times B^{(n-1)} )\cup  (B^{(n-1)}\times \Set{0} )\\
	&\phantom{=} \cup \Set{ (b,c), (c,b), (a,b+c), (b+c, a) | (a,b,c)\mbox{~satisfies~}(*)},
\end{align*}
then $B^{(n)}$ has a natural structure of partial submagma of $A.$
Constructing $B^{(n)}, n=0,1,2,\dots $ inductively, we put
\[
	B' = \cup_{n\geq 0} B^{(n)} \mbox{~and~} B'_2 = \cup_{n\geq 0} B^{(n)}_2.
\]
Now, $B'$ is a partial submonoid of $A$ which contains $B$ and which is the smallest. Thus $B' = B_{ass, A}.$

For any partial magma $A,$ we define $A_{ass}$ to be the smallest partial submonoid of $A_{mon}$ which
contains $A_{wass}.$ The composite $A\to A_{wass} \to A_{ass}$ is denoted by $\alpha.$

\begin{prop}
	Let $A$ be a partial magma and $f\colon A\to B$ be a homomorphism to a partial monoid $B.$
	Then there exists a unique homomorphism $f_{ass}\colon A_{ass} \to B$
	such that $f_{ass}\circ \alpha = f.$	
\end{prop}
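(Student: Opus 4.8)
The plan is to reduce the statement to the universal property of the monoid completion, which is already available, together with one structural fact about how a partial monoid sits inside its own monoid completion. Let $\mu_B\colon B\to B_{mon}$ be the monoid completion of $B$. Since $B_{mon}$ is an abelian monoid, the composite $\mu_B\circ f\colon A\to B_{mon}$ factors, by the previous proposition, through a unique homomorphism $g\colon A_{mon}\to B_{mon}$ with $g\circ\mu=\mu_B\circ f$. As $A_{ass}$ is by construction the smallest partial submonoid of $A_{mon}$ containing $A_{wass}$, I would restrict $g$ to $A_{ass}$ and show that this restriction is, after identifying $B$ with its image, the homomorphism $f_{ass}$ we are after.

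The first step is a functoriality property of the associative closure: since $g$ is a homomorphism of (full) monoids, an induction on the stages of the inductive construction of the closure shows that $g$ carries the associative closure of a partial submagma into the associative closure of its image, so that
\[
 g(A_{ass})=g\bigl((A_{wass})_{ass,A_{mon}}\bigr)\subseteq\bigl(g(A_{wass})\bigr)_{ass,B_{mon}}=\bigl(\mu_B f(A)\bigr)_{ass,B_{mon}}\subseteq\bigl(\mu_B(B)\bigr)_{ass,B_{mon}}.
\]
The second and crucial step is the following Lemma about partial monoids: for a partial monoid $B$ the unit $\mu_B\colon B\to B_{mon}$ is injective, and its image $\mu_B(B)$ is a \emph{saturated} partial submonoid of $B_{mon}$, meaning that for $x,y\in\mu_B(B)$ one has $(x,y)\in(\mu_B(B))_2$ exactly when $x+y\in\mu_B(B)$. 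Granting this, $\mu_B(B)$ is already associatively closed in $B_{mon}$, so the chain of inclusions above collapses to $g(A_{ass})\subseteq\mu_B(B)$. I may therefore set $f_{ass}=\mu_B^{-1}\circ g|_{A_{ass}}$; injectivity makes this well defined, and $f_{ass}\circ\alpha=\mu_B^{-1}\circ g\circ\mu=\mu_B^{-1}\circ\mu_B\circ f=f$. Saturation, together with the strong associativity axiom~(c) for $B$, is exactly what is needed to check that $f_{ass}$ respects the partial addition: if $(y,y')\in(A_{ass})_2$ then $g(y)+g(y')=g(y+y')\in\mu_B(B)$, whence $(f_{ass}(y),f_{ass}(y'))\in B_2$ and $f_{ass}(y+y')=f_{ass}(y)+f_{ass}(y')$.

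Uniqueness is the easy part and uses none of this machinery: the image $\alpha(A)=A_{wass}$ generates $A_{ass}$ under the iterated partial addition of the closure construction, so any two homomorphisms $A_{ass}\to B$ that agree on $A_{wass}$ agree on every stage by induction, hence on all of $A_{ass}$; since $f_{ass}\circ\alpha=f$ pins down the values on $A_{wass}$, this forces uniqueness.

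The main obstacle is plainly the Lemma of the second step, and it is precisely here that the strict (Segal-type) associativity axiom~(c) must be used. Note that a naive confluent rewriting on $\NN[B]$ is \emph{not} available: merging an addable pair already fails to be confluent (a word $a_1\dotplus a_2\dotplus a_3$ with $(a_1,a_2),(a_1,a_3)\in B_2$ can produce two distinct normal forms), although this only concerns longer words and does not bear on single generators. Instead I would analyze the congruence $\sim$ through its \textquotedblleft merge/split\textquotedblright\ moves and prove the generalized associativity furnished by~(c): a multiset of elements of $B$ that admits \emph{some} fully bracketed sum inside $B$ has a total independent of the bracketing and order. Every zero-free word equivalent to a single generator $\langle b\rangle$ then totals to $b$, because the property \textquotedblleft totals to $b$\textquotedblright\ is preserved under merge/split; applying this to an equivalent singleton $\langle b'\rangle$ gives $b=b'$ (injectivity), and to the two-element word coming from $\mu_B(x)+\mu_B(y)=\mu_B(e)$ gives $(x,y)\in B_2$ with $x+y=e$ (saturation).
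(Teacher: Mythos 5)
The paper states this proposition without proof, so there is no official argument to compare against; judged on its own terms, your proof is correct in outline, and it is the argument that the paper's own machinery implicitly supports. The crucial lemma you isolate --- that for a partial monoid $B$ the unit $\mu_B\colon B\to B_{mon}$ is injective with saturated image --- is essentially Lemma \ref{lem:pring_to_blueprint_to_pring}, which the paper proves later (in the Blueprints section, for partial rings) by exactly the move-invariance argument you sketch: the property ``the word can be calculated in $B$, with total $t$'' is preserved by the generating merge/split/zero relations of $\sim$. Your detour through $g\colon A_{mon}\to B_{mon}$ also buys something real: setting $f_{ass}=\mu_B^{-1}\circ g|_{A_{ass}}$ makes well-definedness automatic, whereas a direct stage-by-stage definition of $f_{ass}$ on $A_{ass}=\cup_n B^{(n)}$ would have to re-prove this same invariance to see that the value assigned to a new element $y+z$ does not depend on the triple $(x,y,z)$ that produced it. The functoriality of the associative closure along the monoid homomorphism $g$, the collapse $\bigl(\mu_B(B)\bigr)_{ass,B_{mon}}=\mu_B(B)$, and the uniqueness argument via generation of $A_{ass}$ by $\alpha(A)$ through the closure stages are all sound.

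Two points should be made explicit in a full write-up, though neither is a gap. First, generalized associativity must be proved in its strong form: if \emph{some} full bracketing of $a_1,\dots,a_r$ can be calculated in $B$, then \emph{every} ordering and bracketing can be calculated and all totals agree; the merge/split preservation uses the existence half, not merely uniqueness of the total. This follows from axiom (c) and commutativity by induction (for $r=3$, the definedness of $(a+b)+c$ forces $(a,c),(b,c)\in B_2$ and equality of all regroupings; the inductive step moves one summand across the root split). Second, the collapse step needs the observation that $\mu_B(B)$, with its maximal partial magma structure, itself satisfies (c) --- this follows from injectivity, saturation, and (c) in $B$ --- so that it is a partial submonoid of $B_{mon}$ and hence equal to its own associative closure; this is precisely why the closure of $\mu_B f(A)$ cannot escape $\mu_B(B)$.
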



\subsection{Limits and colimits in  $\pmag$ and $\pmon$} \label{sec:pmag_pmon_complete_cocomplete}
It is easily checked that $\pmag$ has all small limits and all small colimits.
It is also easily checked that in $\pmon,$ all limits in $\pmag$ are also limits in $\pmon,$
so $\pmon$ has all small limits.
If we construct a colimit in $\pmag$ from a given diagram in $\pmon,$ then it is not 
a colimit in $\pmon,$ but composing with the functor which takes associative closure
makes it a colimit in $\pmon.$ This shows that $\pmon$ has all small colimits.

\subsection{Tensor product and $\Hom$}

In this section, we define tensor product and $\Hom.$
Propositions are given without proof since each of them 
can be proved by a formal argument.

Let $A, B$ be partial monoids and $\NN[A\times B]$ be the free abelian monoid generated by
the set $A\times B.$
Let $\sim$ be the equivalence relation on $\NN[A\times B]$ generated by 
\begin{enumerate}
	\item $(0, b) \dotplus x \sim x \sim (a,0)\dotplus x$ for all $a\in A, b\in B$ and $x\in \NN[A\times B],$
	\item $(a_1, b) \dotplus (a_2, b) \dotplus x \sim (a_1+a_2, b) \dotplus x$ for all $x\in \NN[A\times B], b\in B$ and $(a_1, a_2) \in A_2,$
	\item $(a, b_1) \dotplus (a, b_2) \dotplus x \sim (a, b_1+b_2) \dotplus x$ for all $x\in \NN[A\times B], a\in A$ and $(b_1, b_2) \in A_2.$
\end{enumerate}
We put $T(A, B) = \NN[A\times B]/ \sim.$ Then $T(A,B)$ has an abelian monoid structure such that
$\pi\colon \NN[A\times B] \to T(A,B)$ is a homomorphism of monoids. 
An element of $T(A,B)$ represented by $(a,b) \in A\times B$ is denoted by $a\otimes b.$
We give $\pi(A\times B) \subseteq T(A,B)$ the maximal partial magma structure.

\begin{defn}[tensor product]
	Let $A$ and $B$ be partial monoids. 
	The associative closure of $\pi(A\times B)$ is
	denoted by $A\otimes B$ and is called the {\bf tensor product} of $A$ and $B.$
\end{defn}
\begin{defn}[bilinear map]
We say that a map $f\colon A\times B \to C$ is bilinear if for each $a\in A$ the map $B\to C,$ given by 
$b\mapsto f(a,b)$ is a partial monoid homomorphism and for each $b\in B$ the map $A\to C$ given by
$a\mapsto f(a,b)$ is a partial monoid homomorphism.
\end{defn}

\begin{prop}
	Let $A, B, C$ be partial monoids and $f\colon A\times B \to C$ a biliear map.
	Then there exists a unique partial monoid homomorphism 
	$\tilde{f}\colon A\otimes B \to C$ which makes the following diagram commute:
	\begin{center}
	\begin{tikzcd}
		A\times B \ar{r}{f}\ar{d} & C\\ 
		A\otimes B\ar{ru}[below]{\tilde{f}},
	\end{tikzcd}
	\end{center}
	where the vertical map is the canonical map.\qed
\end{prop}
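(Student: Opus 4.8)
The plan is to exploit that, by definition, $A\otimes B$ is the associative closure of the partial magma $P:=\pi(A\times B)$ sitting inside $T(A,B)$, so that everything should follow formally from the two universal properties already established (monoid completion and associative closure). First I would check that $T(A,B)$ is canonically the monoid completion $P_{mon}$: the inclusion $P\hookrightarrow T(A,B)$ extends to a monoid map $P_{mon}\to T(A,B)$, while the relations defining $T(A,B)$ are exactly the images of the relations defining $P_{mon}$ (each move $a_1\otimes b+a_2\otimes b=(a_1+a_2)\otimes b$, and its mate in the second variable, is a defined sum of simple tensors landing back in $P$), so the two monoids are inverse to one another. Consequently $A\otimes B$ is the associative closure $P_{ass}$ of $P$ in the sense of the preceding subsection, and the universal property of the associative closure becomes applicable. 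Thus to produce $\tilde f$ it suffices to produce a partial-magma homomorphism $g\colon P\to C$ with $g(a\otimes b)=f(a,b)$ and then take $\tilde f=g_{ass}$.

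The construction of $g$ is where the work lies, and I would split it into well-definedness and the homomorphism property. For well-definedness I would first pass to the completed target: the set map $A\times B\to C_{mon}$, $(a,b)\mapsto \mu(f(a,b))$, extends by freeness to a monoid homomorphism $\hat f\colon \NN[A\times B]\to C_{mon}$, and bilinearity of $f$ says exactly that $\hat f$ annihilates the three families of relations generating $\sim$ (namely $\mu f(0,b)=\mu f(a,0)=0$, together with $\mu f(a_1+a_2,b)=\mu f(a_1,b)+\mu f(a_2,b)$ and its second-variable analogue). Hence $\hat f$ descends to $F\colon T(A,B)\to C_{mon}$ with $F(a\otimes b)=\mu(f(a,b))$, so that $\mu(f(a,b))$ depends only on the class $a\otimes b$. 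I would then upgrade this to the statement that $f(a,b)$ itself depends only on $a\otimes b$, and that $g$ respects the maximal partial-magma structure of $P$: whenever $a\otimes b+a'\otimes b'=a''\otimes b''$ in $T(A,B)$, the sum $f(a,b)+f(a',b')$ is \emph{defined} in $C$ and equals $f(a'',b'')$. With $g$ in hand, the associative-closure Proposition extends it uniquely to a homomorphism $\tilde f\colon A\otimes B\to C$; and uniqueness of the whole factorization is then immediate, since the simple tensors generate $A\otimes B$ under iterated defined addition, so $\tilde f(a\otimes b)=f(a,b)$ already determines $\tilde f$ by induction along the associative-closure filtration.

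The main obstacle is exactly the passage from $C_{mon}$ back to $C$ in the previous paragraph: the homomorphism $F$ only records equalities after applying the completion map $\mu$, which need not be injective, whereas $g$ must take values in $C$ and must convert equalities and admissible sums of simple tensors into genuinely defined sums in $C$. The point I would emphasise is that every identification produced by $\sim$ among (sums of) simple tensors is built from finitely many elementary bilinearity moves, and each such move is mirrored, through the two partial-monoid homomorphisms $a\mapsto f(a,b)$ and $b\mapsto f(a,b)$ furnished by bilinearity, by a correspondingly defined partial sum in $C$; so the relevant sums in $C$ are defined and the equalities already hold in $C$ rather than merely in $C_{mon}$. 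This is the step in which genuine bilinearity, as opposed to additivity only after completion, is indispensable, and it is what the phrase ``formal argument'' is quietly invoking.
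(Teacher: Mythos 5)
Your argument is correct, but there is nothing in the paper to compare it against line by line: this proposition is one of those the paper explicitly leaves unproved (``Propositions are given without proof since each of them can be proved by a formal argument''), so what you have written is, in effect, the missing proof. Your route is sound: identifying $T(A,B)$ with the monoid completion $P_{mon}$ of $P=\pi(A\times B)$ endowed with its maximal partial magma structure, so that $A\otimes B$ becomes the intrinsic associative closure $P_{ass}$ and the universal property of the associative closure applies, reduces everything to producing a partial magma homomorphism $g\colon P\to C$ with $g(a\otimes b)=f(a,b)$. You are also right that the heart of the matter is not formal: the completion map $\mu\colon C\to C_{mon}$ need not be injective, so the easy homomorphism $F\colon T(A,B)\to C_{mon}$ does not suffice, and one must show that identifications and admissible sums of simple tensors are witnessed by genuinely defined sums in $C$. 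Your chain-of-elementary-moves argument is exactly the right device; it is in fact the same device the paper itself uses later, in Lemma \ref{lem:pring_to_blueprint_to_pring}, where the invariant ``if $\dotplus$ is replaced by $+$, the formal sum can be calculated in $A$, with a well-defined value'' is propagated along the generating relations of a congruence on $\NN[A]$.

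One step should be made explicit, because it is where your phrase ``mirrored by a correspondingly defined partial sum'' hides real content. When a move merges $(a_1,b)\dotplus(a_2,b)$ into $(a_1+a_2,b)$ inside a longer formal sum, bilinearity only gives that $f(a_1,b)+f(a_2,b)$ is defined in $C$ and equals $f(a_1+a_2,b)$; to conclude that calculability of the whole image sum $f(a_1,b)\dotplus f(a_2,b)\dotplus c_1\dotplus\dots\dotplus c_k$ is equivalent to calculability of $f(a_1+a_2,b)\dotplus c_1\dotplus\dots\dotplus c_k$, with equal values, you must re-associate the merged pair against the remaining terms $c_1,\dots,c_k$. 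This uses axiom (c) of the partial monoid $C$ --- concretely, the fact (itself proved by induction from axioms (b) and (c)) that in a partial monoid the summability and the value of a finite family are independent of order and bracketing. This is precisely why the target must be a partial monoid rather than a mere partial magma, in accordance with the paper's observation that $\Hom_\pmag(A,B)$ is only well-behaved when $B$ is associative. With that detail inserted, your invariant does propagate along every finite chain of elementary moves; both the well-definedness of $g$ and the fact that it respects the maximal partial magma structure of $P$ follow, and uniqueness comes, as you say, from the inductive filtration of the associative closure by the sets $B^{(n)}$.
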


For partial magmas $A,B$ we would like to define
\begin{align*}
	\Hom_\pmag (A, B) &= \Set{ f\colon A\to B | f : \mbox{homomorphism} }\\
	\Hom_\pmag (A, B)_2 &= \Set{ (f,g) | (f(a), g(a)) \in B_2 \mbox{~for all~} a\in A }.
\end{align*}
But a formula $(f+g)( a ) = f(a) + g(a)$ does not define a homomorphism $f+g \colon A\to B$
unless $B$ is associative. If we assume that $B$ is a partial monoid, then $\Hom_\pmag (A,B)$
given above is a partial monoid. Thus we have a functor
$\Hom_\pmag( -, - ) \colon \pmag^{op}\times \pmon \to \pmon.$
We also have a functor 
$\Hom_\pmon( -, - ) \colon \pmon^{op}\times \pmon \to \pmon.$

\begin{prop}\label{prop:bilinear_pmag_pmon}
	Let $A$ and $B$ be partial magmas and $C$ be a partial monoid.
	If $f \colon A\times B\to C$ is a bilinear map, then there exists a unique bilinear map
	$\tilde{f}\colon A_{ass}\times B_{ass} \to C$ which makes the following diagram commute:
	\begin{center}
	\begin{tikzcd}
		A\times B \ar{r}{f}\ar{d} & C\\ 
		A_{ass}\times B_{ass}\ar{ru}[below]{\tilde{f}}.
	\end{tikzcd}
	\end{center}\qed
\end{prop}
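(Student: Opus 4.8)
The plan is to build $\tilde f$ one variable at a time, reducing in each step to the universal property of the associative closure (the preceding proposition) by currying the bilinear map through a $\Hom$-partial-monoid. Throughout, let $\alpha_A\colon A\to A_{ass}$ and $\alpha_B\colon B\to B_{ass}$ denote the canonical homomorphisms, so that the vertical map in the triangle is $\alpha_A\times\alpha_B$ and commutativity means $\tilde f\circ(\alpha_A\times\alpha_B)=f$.

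First I would treat the $A$-variable. Define $\phi\colon A\to \Hom_\pmag(B,C)$ by $\phi(a)=f(a,-)$; this lands in $\Hom_\pmag(B,C)$ precisely because $f$ is a homomorphism in the second slot, and since $C$ is a partial monoid, $\Hom_\pmag(B,C)$ is a partial monoid. The crucial verification is that $\phi$ is itself a homomorphism: $\phi(0)$ is the zero homomorphism because $f(0,b)=0$ for all $b$ (first-slot homomorphism property), and for $(a_1,a_2)\in A_2$ the first-slot property gives $(f(a_1,b),f(a_2,b))\in C_2$ for every $b$, whence $(\phi(a_1),\phi(a_2))\in \Hom_\pmag(B,C)_2$, together with $\phi(a_1+a_2)(b)=f(a_1+a_2,b)=f(a_1,b)+f(a_2,b)=(\phi(a_1)+\phi(a_2))(b)$. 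Applying the universal property of $A_{ass}$ to $\phi$ yields a unique homomorphism $\phi_{ass}\colon A_{ass}\to\Hom_\pmag(B,C)$ with $\phi_{ass}\circ\alpha_A=\phi$. Uncurrying, I set $g(\xi,b)=\phi_{ass}(\xi)(b)$; then $g\colon A_{ass}\times B\to C$ is bilinear, since for fixed $\xi$ the map $b\mapsto g(\xi,b)$ is a homomorphism as $\phi_{ass}(\xi)\in\Hom_\pmag(B,C)$, and for fixed $b$ the map $\xi\mapsto g(\xi,b)$ is the composite of $\phi_{ass}$ with the evaluation $\mathrm{ev}_b\colon\Hom_\pmag(B,C)\to C$, which is a homomorphism directly from the definition of $\Hom_\pmag(B,C)_2$.

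Next I would repeat this construction in the $B$-variable, now starting from $g$ and using that $A_{ass}$ is a partial monoid. Define $\psi\colon B\to\Hom_\pmag(A_{ass},C)$ by $\psi(b)=g(-,b)$; the same computation as above, with the roles of the two slots exchanged, shows $\psi$ is a homomorphism into the partial monoid $\Hom_\pmag(A_{ass},C)$, so the universal property of $B_{ass}$ produces a unique $\psi_{ass}\colon B_{ass}\to\Hom_\pmag(A_{ass},C)$ with $\psi_{ass}\circ\alpha_B=\psi$. I then define $\tilde f(\xi,\eta)=\psi_{ass}(\eta)(\xi)$, whose bilinearity follows exactly as for $g$. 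For commutativity, unwinding the definitions gives $\tilde f(\alpha_A(a),\alpha_B(b))=\psi_{ass}(\alpha_B(b))(\alpha_A(a))=\psi(b)(\alpha_A(a))=g(\alpha_A(a),b)=\phi_{ass}(\alpha_A(a))(b)=\phi(a)(b)=f(a,b)$, as required.

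Finally, for uniqueness I would lean on the uniqueness clause of the associative-closure universal property, applied one variable at a time. If $\tilde f'$ is another bilinear map with $\tilde f'\circ(\alpha_A\times\alpha_B)=f$, then for each fixed $b\in B$ both $\tilde f(-,\alpha_B(b))$ and $\tilde f'(-,\alpha_B(b))$ are homomorphisms $A_{ass}\to C$ whose precomposition with $\alpha_A$ is $a\mapsto f(a,b)$; uniqueness forces them to coincide, so $\tilde f$ and $\tilde f'$ agree on $A_{ass}\times\alpha_B(B)$. Then for each fixed $\xi\in A_{ass}$ the homomorphisms $\tilde f(\xi,-)$ and $\tilde f'(\xi,-)$ on $B_{ass}$ agree after precomposition with $\alpha_B$, so uniqueness again gives $\tilde f=\tilde f'$. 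The one point demanding genuine care, and the likely main obstacle, is the homomorphism check for $\phi$ (and $\psi$): it is exactly there that the ``for all $b$'' definition of $\Hom_\pmag(B,C)_2$ must be matched against first-slot bilinearity, and that the pointwise sum $f(a_1,-)+f(a_2,-)$ is again a homomorphism only because $C$ is associative. Everything else is formal currying and a twofold invocation of the universal property.
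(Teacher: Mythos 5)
Your proof is correct. Note that the paper does not actually prove this proposition: it appears in the subsection whose preamble says the propositions there are stated without proof because each can be established by a formal argument, and the statement is closed with a tombstone. Your currying argument supplies exactly that missing formal argument, and it uses only tools the paper has set up immediately beforehand: the fact that $\Hom_\pmag(B,C)$ is a partial monoid whenever the codomain $C$ is (this is where associativity of $C$ enters, as you correctly flag --- the pointwise sum of two homomorphisms is again a homomorphism only because sums in $C$ can be rearranged), and the universal property of the associative closure $\alpha\colon A\to A_{ass}$ from the preceding subsection. The two-step structure (curry in the first variable, apply the universal property to land in $\Hom_\pmag(B,C)$, uncurry via the evaluation homomorphisms, then repeat in the second variable) is sound, the commutativity computation is right, and your uniqueness argument --- applying the uniqueness clause of the universal property one variable at a time, which is legitimate because a bilinear map restricts to a homomorphism in each slot --- closes the proof. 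In short: correct, and it fills a gap the paper deliberately leaves open, in the style the paper's own $\Hom$-functor machinery suggests.
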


\begin{prop}
	Let $A$ be a partial monoid, then
	we have an adjoint pair of functors
	\[
		\otimes A \colon \pmon \leftrightarrows \pmon : \Hom(A, -) \qedhere
	\]
	\qed
\end{prop}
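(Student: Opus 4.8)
The plan is to produce, for every pair of partial monoids $X$ and $Y,$ a bijection
\[
	\Hom_\pmon(X\otimes A, Y) \;\cong\; \Hom_\pmon(X, \Hom_\pmon(A, Y))
\]
that is natural in both $X$ and $Y$; since $\Hom(A,-) = \Hom_\pmon(A,-),$ such a family of natural bijections is precisely the assertion that $-\otimes A$ is left adjoint to $\Hom(A,-).$ I would factor it through the set $\mathrm{Bil}(X,A;Y)$ of bilinear maps $X\times A\to Y.$ The left-hand half is immediate from the universal property of the tensor product proved earlier: applied to the triple $(X, A, Y),$ it gives a bijection between partial monoid homomorphisms $X\otimes A\to Y$ and bilinear maps $X\times A\to Y,$ realized by precomposition with the canonical map $X\times A\to X\otimes A.$

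It remains to match $\mathrm{Bil}(X,A;Y)$ with $\Hom_\pmon(X, \Hom_\pmon(A,Y)).$ Given a bilinear $f,$ I set $\phi_f(x) = f(x,-),$ which is a homomorphism $A\to Y$ because $a\mapsto f(x,a)$ is one for each fixed $x.$ The content is to check that $\phi_f\colon X\to \Hom_\pmon(A,Y)$ is itself a homomorphism, and here the partial structure must be tracked: $\phi_f(0)=0$ holds since $x\mapsto f(x,a)$ preserves $0,$ and if $(x_1,x_2)\in X_2$ then for every $a$ one has $(f(x_1,a),f(x_2,a))\in Y_2$ together with $f(x_1+x_2,a)=f(x_1,a)+f(x_2,a).$ By the very definition of the partial monoid $\Hom_\pmon(A,Y),$ the first of these (quantified over all $a$) says $(\phi_f(x_1),\phi_f(x_2))\in \Hom_\pmon(A,Y)_2,$ and the second says $\phi_f(x_1+x_2)=\phi_f(x_1)+\phi_f(x_2).$ Conversely, from a homomorphism $\psi\colon X\to\Hom_\pmon(A,Y)$ I build $f_\psi(x,a)=\psi(x)(a)$ and obtain bilinearity by reading the same two equivalences in the opposite direction. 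The assignments $f\mapsto\phi_f$ and $\psi\mapsto f_\psi$ are then visibly mutually inverse.

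To finish I would verify naturality. For homomorphisms $u\colon X'\to X$ and $v\colon Y\to Y',$ the only thing to see is that the bijection intertwines precomposition by $u\otimes\mathrm{id}_A$ and by $u$ on the two sides, and postcomposition by $v$ and by $\Hom_\pmon(A,v)$ respectively; concretely $\phi_{\,v\circ f\circ(u\times\mathrm{id}_A)} = \Hom_\pmon(A,v)\circ\phi_f\circ u,$ which is a one-line substitution.

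All of the above is formal, so I anticipate no genuine obstacle; the single point demanding care is the bookkeeping of the \emph{partial} structure, ensuring that no sum is ever invoked outside the declared domains $X_2,$ $Y_2,$ or $\Hom_\pmon(A,Y)_2.$ This is exactly what the definition of $\Hom_\pmon(A,Y)_2$ was engineered to accommodate, so the middle bijection respects summability on the nose and the verification reduces to correctly matching the universal quantifier over $a\in A.$ That $Y$ is a partial monoid (not merely a partial magma) is used twice and is essential: it is what makes $\Hom_\pmon(A,Y)$ a partial monoid and what makes the tensor universal property available.
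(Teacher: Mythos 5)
Your proof is correct: the two-step factorization through bilinear maps --- the universal property of $\otimes$ (stated just before this proposition) on one side, and currying compatible with the partial structure of $\Hom_\pmon(A,Y)$ on the other --- is precisely the ``formal argument'' the paper alludes to, since it states this proposition with no proof at all. Your bookkeeping of the quantifier over $a\in A$ in the definition of $\Hom_\pmon(A,Y)_2$, and the remark that $Y$ being a partial monoid (not just a partial magma) is what makes pointwise sums of homomorphisms into homomorphisms, correctly fills in the only spots where the partial setting genuinely differs from the classical tensor-hom adjunction.
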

\begin{prop}
	$(\pmon, \otimes , \fun)$ constitute a symmetric monoidal category. \qed
\end{prop}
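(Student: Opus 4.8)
The plan is to exhibit all the coherence data --- the associator, the left and right unitors, and the braiding --- as canonical isomorphisms built from the universal property of $\otimes$, and then to reduce every coherence diagram (pentagon, triangle, the two hexagons, and the symmetry relation $\beta_{B,A}\circ\beta_{A,B}=\mathrm{id}$) to an equality of homomorphisms out of an iterated tensor product. The single engine behind everything is the bilinear universal property: a homomorphism $A\otimes B\to C$ is the same datum as a bilinear map $A\times B\to C$, and it is uniquely determined by its values on the elements $a\otimes b=\pi(a,b)$, since these generate $A\otimes B$ as the associative closure of $\pi(A\times B)$ inside $T(A,B)$ and homomorphisms preserve the partial sum. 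Functoriality of $\otimes$ comes first: given $f\colon A\to A'$ and $g\colon B\to B'$, the assignment $(a,b)\mapsto f(a)\otimes g(b)$ is bilinear and factors through a unique $f\otimes g$, after which the interchange law and functoriality in each slot follow from the uniqueness clause.

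For the unitors I would use the map $\fun\times A\to A$ sending $(1,a)\mapsto a$ and $(0,a)\mapsto 0$. The only point to check is that $\epsilon\mapsto\epsilon\cdot a$ is a homomorphism $\fun\to A$, and it is, because $1+1$ is undefined in $\fun$ and so imposes no condition; bilinearity in the other variable is immediate. This induces $\lambda_A\colon\fun\otimes A\to A$, whose two-sided inverse is $a\mapsto 1\otimes a$, the composites being identities again by uniqueness. The right unitor $\rho_A$ is built symmetrically, and the triangle axiom will reduce to the identity $a\otimes 1\otimes b\mapsto a\otimes b$ read on generators.

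The braiding is the cleanest case: $(a,b)\mapsto b\otimes a$ is bilinear, yielding $\beta_{A,B}\colon A\otimes B\to B\otimes A$, and $\beta_{B,A}\circ\beta_{A,B}$ agrees with the identity on every $a\otimes b$ and hence equals it. For the associator I would proceed in two stages: fixing $c\in C$, bilinearity of $(a,b)\mapsto a\otimes(b\otimes c)$ gives a homomorphism $A\otimes B\to A\otimes(B\otimes C)$, and letting $c$ vary produces a bilinear map $(A\otimes B)\times C\to A\otimes(B\otimes C)$ whose induced homomorphism is $\alpha_{A,B,C}$; its inverse is assembled in the mirror-image fashion. Naturality of all four isomorphisms, like everything else, is a consequence of uniqueness.

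The main obstacle I expect lies exactly in this nesting, which stems from $A\otimes B$ being not the free monoid $T(A,B)$ but the associative closure of $\pi(A\times B)$. I must justify with care the principle that a homomorphism out of $(A\otimes B)\otimes C$ is determined by its values on the fully decomposed generators $(a\otimes b)\otimes c$: such a homomorphism corresponds to a bilinear map on $(A\otimes B)\times C$, which for each fixed $c$ restricts to a homomorphism out of $A\otimes B$ and is thus pinned down on the $a\otimes b$, while the dependence on $c$ is controlled by bilinearity. Granting this determination principle, both composites around the pentagon become homomorphisms on $((A\otimes B)\otimes C)\otimes D$ that agree on $a\otimes b\otimes c\otimes d$, and each triangle and hexagon collapses likewise to a one-line check on generators, so the uniqueness clause of the universal property forces all the diagrams to commute.
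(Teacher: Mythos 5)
Your proposal is correct, and it is exactly the ``formal argument'' the paper has in mind: the paper states this proposition with no proof at all (it is one of the propositions declared to be provable by a formal argument), and your route --- building the associator, unitors and braiding from the bilinear universal property and then reducing every coherence axiom to agreement of homomorphisms on the fully decomposed generators, which suffices because every element of a tensor product is an iterated partial sum of generators --- is the intended one. The only step where you should be slightly more explicit is the existence (not uniqueness) of the associator: for a general element $t \in A\otimes B$, additivity of $c \mapsto \phi_c(t)$ requires rearranging the iterated sum $\phi_{c_1}(t) + \phi_{c_2}(t)$ into $\phi_{c_1+c_2}(t)$, which holds because Segal's strict associativity axiom (c), together with commutativity, implies that any computable finite sum in a partial monoid can be reordered and reparenthesized freely --- a small lemma that your appeal to ``bilinearity'' leaves implicit.
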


\subsection{Equivalence relations}

For a detailed discussion on equivalence relations, see \S 2.5 of \cite{borceux-handbook-2} or \cite{nlab-congruences}. 
As opposed to the terminology in \cite{nlab-congruences}, we reserved the word ``congruences'' for effective equivalence relations on partial rings,
that apear in a later section of this paper.
Let $A$ be a partial magma and $R$ be an equivalence relation on $A$ in $\pmag.$
Thus $R$ is a partial submagma of $A\times A$ such that for any partial magma $X,$
\[
	\Hom_{\pmag} (X, R) \subseteq \Hom_{\pmag} (X, A\times A) 
	\cong \Hom_{\pmag}(X, A)\times \Hom_{\pmag}(X, A)
\]
is an equivalence relation on the set $\Hom_{\pmag} (X,A).$
Recall that $R$ is said to be effective if 
$R\rightrightarrows A$ is a kernel pair of some $f\colon A\to B,$ {\it i.e.}
the following diagram is a pullback diagram:
\[
	\begin{tikzcd}
		R\ar{r}\ar{d} & A\ar{d}{f}\\
		A\ar{r}[below]{f} & B.
	\end{tikzcd}
\]

\begin{defn}
	Let $A$ be a partial magma.
	We say that an equivalence relation $R$ on $A$ is 
	{\bf additive} if $R_2 = (R\times R) \cap A_2.$
\end{defn}

If $R$ is an additive equivalence relation on a partial magma $A,$ then we can define a
partial magma by
\begin{align*}
	&A\dslash R = ( \mbox{~the quotient set of~} A\mbox{~by~} R\\
	&(A\dslash R)_2 = \Set{ ([a_1], [a_2] ) | (a_1, a_2) \in A_2 }\\
	&[a_1]+[a_2] = [a_1+a_2].
\end{align*}

It is easily checked that the following diagram is a coequalizer diagram in $\pmag:$
\begin{center}
	\begin{tikzcd}
		R \ar[shift left = 0.3em]{r}\ar[shift right = 0.3em]{r}& A \ar{r} & A\dslash R.
	\end{tikzcd}
\end{center}

\begin{prop}\label{prop:effective-additive-pmag}
	An equivalence relation $R$ on a partial magma $A$ is effective if and only if 
	it is additive.
\end{prop}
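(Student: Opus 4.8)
The plan is to reduce both implications to a single explicit computation, namely a concrete normal form for kernel pairs in $\pmag$. For an arbitrary homomorphism $g\colon A\to B$, I would first read off, from the construction of limits in $\pmag$ (Section \ref{sec:pmag_pmon_complete_cocomplete}), that the kernel pair $A\times_B A$ has underlying set $\Set{(a,a')\in A\times A | g(a)=g(a')}$, distinguished element $(0,0)$, and addability inherited by restriction from the product: a pair $((a,a'),(b,b'))$ is addable exactly when $(a,b)\in A_2$ and $(a',b')\in A_2$. In symbols, the addable set of $A\times_B A$ equals $\bigl((A\times_B A)\times(A\times_B A)\bigr)\cap (A\times A)_2$, where $(A\times A)_2$ denotes the addability of the product (the object written $A_2$ in the definition of additivity). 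The content to verify here is that this restricted structure really is the pullback: if $(a,b),(a',b')\in A_2$ with $g(a)=g(a')$ and $g(b)=g(b')$, then $g(a+b)=g(a)+g(b)=g(a')+g(b')=g(a'+b')$, so the sum stays in $A\times_B A$, and testing against an arbitrary $X$ shows that a pair of maps into $A$ equalized by $g$ is a homomorphism into the pullback iff both components are homomorphisms.

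Granting this normal form, the forward implication is immediate: if $R$ is effective then $R=A\times_B A$ for some $g$, and the computation gives $R_2=(R\times R)\cap (A\times A)_2$, which is exactly additivity.

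For the converse, suppose $R$ is additive. I would take $B=A\dslash R$ and let $g=q\colon A\to A\dslash R$ be the canonical projection, which is a homomorphism (and, as already remarked, the coequalizer of $R\rightrightarrows A$). It then suffices to identify the kernel pair of $q$ with $R$ as a partial submagma of $A\times A$. On underlying sets this is the equivalence $[a]=[a']\iff (a,a')\in R$, which holds because $R$ is a genuine set-theoretic equivalence relation on $A$; I would extract the latter from the defining property of an equivalence relation in $\pmag$ by testing against $X=\fun$, for which $\Hom_\pmag(\fun,A)\cong A$ and the induced relation on $A$ is precisely $R$. On addable structures, the kernel pair of $q$ carries $(R\times R)\cap (A\times A)_2$ by the normal form, and this equals $R_2$ exactly by the additivity hypothesis. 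Hence $R$ coincides with the kernel pair of $q$, so $R$ is effective.

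The only genuinely delicate point is pinning down the addable structure of the pullback: one must check that it is simultaneously large enough to make both projections homomorphisms and contained in the product's addability, and that it satisfies the universal property against every test object. Once this is settled, additivity is visibly the precise condition isolating those equivalence relations whose addability is inherited from $A\times A$ without loss --- which is exactly the class realized by kernel pairs.
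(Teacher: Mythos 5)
Your proof is correct. The converse direction is, in substance, identical to the paper's: the paper also takes the projection $A\to A\dslash R$ and verifies the kernel-pair property directly, and its check that $(g_1,g_2)\colon B\to A\times A$ factors through $R$ as a homomorphism is exactly your additivity computation (including the implicit use of $R$ being a set-theoretic equivalence relation, which you justify more carefully than the paper does via $\Hom_\pmag(\fun,A)\cong A$). The forward directions, however, genuinely differ. You compute the kernel pair explicitly as a partial submagma of $A\times A$ with restricted addability and then invoke uniqueness of pullbacks; the paper never computes the kernel pair at all, but instead probes the pullback square with the four-element partial monoid $C=\Set{0,a,b,a+b}$ in which $a+b$ is the only nontrivial sum: two $R$-related addable pairs determine two maps $C\to A$ equalized by $f$, the universal property lifts them to a map $C\to R$, and the image of the addable pair $(a,b)\in C_2$ is precisely the witness of additivity. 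Your normal-form lemma buys a unified mechanism for both implications and makes the forward one a one-line read-off; the paper's test-object trick buys independence from any concrete model of the pullback, which is why the same argument can be reused verbatim for the partial-monoid case in the proposition that follows. Relatedly, the one step you should spell out is the identification ``$R=A\times_B A$'': strictly, effectiveness gives an isomorphism $R\cong A\times_B A$ commuting with the two projections; since both objects are partial submagmas of $A\times A$ and the projections are jointly injective, this isomorphism must be the identity on underlying sets, and only then do the two addable structures literally coincide, yielding $R_2=(R\times R)\cap(A\times A)_2$. This is routine, but it is exactly where your argument silently uses uniqueness of limits, and it is the step the paper's choice of proof avoids having to make.
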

\begin{proof}
Suppose that $R$ is effective. Then $R\rightrightarrows A$ is a kernel pair of 
some $f\colon A\to B.$ 
Let $C=\Set{ 0, a, b, a+b}$ be a partial monoid in which $a+b$ is the only non-trivial sum.
If $a_1Rb_1, a_2Rb_2$ and $(a_1, a_2), (b_1, b_2) \in A_2$, we define $h_i\colon C\to A$
by $h_i(a) = a_i, h_i(b) = b_i\,(i=1,2).$ Since $f\circ h_1 = f\circ h_2$ we have
a morphism $h\colon C\to R$ such that $h(a) = (a_1, a_2)$ and $h(b) = (b_1, b_2)$ by
universality of the pullback. Then $((a_1, a_2) , (b_1, b_2))\in R_2,$ which means that
$R$ is additive.

Conversely, suppose that $R$ is additive. Let $\pi \colon A\to A\dslash R$ be 
the projection morphism. To show that the diagram
\[
	\begin{tikzcd}
		R\ar{r}\ar{d} & A\ar{d}{f}\\
		A\ar{r}[below]{f} & A\dslash R
	\end{tikzcd}
\]
is a pullback diagram, let $g_1, g_2\colon B\to A$ be two morphisms such that
$f\circ g_1 = f\circ g_2.$ It is easily checked that $(g_1,g_2) \colon B\to A\times A$
factors through a morphism $B\to R$ since $R$ is additive. This proves that
$R$ is effective.
\end{proof}

Let $A$ be a partial monoid and $R$ be an additive equivalence relation on $A.$
Let $A/R = (A\dslash R)_{ass}$ be the associative closure of $A\dslash R.$
Then it is easily checked that the following diagram is a coequalizer diagram in $\pmon:$
\begin{center}
	\begin{tikzcd}
		R \ar[shift left = 0.3em]{r}\ar[shift right = 0.3em]{r}& A \ar{r} & A/R.
	\end{tikzcd}
\end{center}

\begin{prop}
	An equivalence relation $R$ on a partial monoid $A$ is effective if and only if 
	it is additive and the canonical map $\alpha \colon A\dslash R \to A/R$ is injective.
\end{prop}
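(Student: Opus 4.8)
The plan is to exhibit $R$ as the kernel pair, taken in $\pmon$, of the quotient homomorphism $q = \alpha\circ\pi\colon A\to A/R$, where $\pi\colon A\to A\dslash R$ is the projection and $\alpha\colon A\dslash R\to A/R$ is the canonical map into the associative closure. The guiding principle is that an equivalence relation is effective exactly when it agrees with the kernel pair of its own coequalizer; since $A\to A/R$ is the coequalizer of $R\rightrightarrows A$ in $\pmon$, everything reduces to comparing $R$ with this one kernel pair. Throughout I will use that limits in $\pmon$ are computed as in $\pmag$ (Section \ref{sec:pmag_pmon_complete_cocomplete}), so that the kernel pair of any $\pmon$-morphism is the concrete sub-partial-monoid of $A\times A$ that it cuts out set-theoretically, together with the induced structure $(\,\cdot\,)_2=(\,\cdot\,\times\,\cdot\,)\cap (A\times A)_2$ inherited as an equalizer.

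For the forward implication, suppose $R$ is effective, say $R\rightrightarrows A$ is the kernel pair of some $f\colon A\to B$ in $\pmon$. Viewing $B$ merely as a partial magma and using that the forgetful functor $\pmon\to\pmag$ preserves limits, the same diagram is a kernel pair in $\pmag$; hence $R$ is effective in $\pmag$ and therefore additive by Proposition \ref{prop:effective-additive-pmag}. In particular $A\dslash R$, $A/R$ and $\alpha$ are defined and $A\to A/R$ is the coequalizer of $R\rightrightarrows A$. Since $f$ equalizes the two projections of $R$, it factors as $f=\bar f\circ q$ for a unique homomorphism $\bar f\colon A/R\to B$. To see that $\alpha$ is injective, suppose $\alpha([a_1])=\alpha([a_2])$; applying $\bar f$ gives $f(a_1)=f(a_2)$, so $(a_1,a_2)\in R$ because $R$ is the kernel pair of $f$, that is $[a_1]=[a_2]$. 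Thus $\alpha$ is injective.

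For the converse, assume $R$ is additive and $\alpha$ is injective, and let $K\rightrightarrows A$ be the kernel pair of $q\colon A\to A/R$, computed in $\pmag$ as the sub-partial-monoid of $A\times A$ with underlying set $\Set{(a_1,a_2) | q(a_1)=q(a_2)}$. Because $q=\alpha\circ\pi$ and $\alpha$ is injective, $q(a_1)=q(a_2)$ holds if and only if $[a_1]=[a_2]$, i.e. if and only if $(a_1,a_2)\in R$; hence $K=R$ as subsets of $A\times A$. It remains to match the partial magma structures: as a kernel pair one has $K_2=(K\times K)\cap(A\times A)_2$, while additivity of $R$ gives $R_2=(R\times R)\cap(A\times A)_2$, and these coincide once $K=R$. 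Since $q$ is a $\pmon$-morphism and $A/R$ is a partial monoid, this kernel pair in $\pmag$ is also the kernel pair in $\pmon$, so $R=K$ shows that $R$ is effective.

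The only genuine work is the bookkeeping in this last step, namely checking that the partial magma structure on the kernel pair of $q$ is precisely the additive structure $R_2=(R\times R)\cap(A\times A)_2$; this is where additivity enters essentially. The role of the injectivity hypothesis is equally transparent: it measures exactly the gap between the naive quotient $A\dslash R$, which is the coequalizer in $\pmag$, and the genuine $\pmon$-coequalizer $A/R$ obtained by applying the associative closure. Because limits, but not colimits, are shared between the two categories, the comparison map $\alpha$ is precisely the extra datum one must control, and I expect no obstacle beyond verifying these identifications.
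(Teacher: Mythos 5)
Your proof is correct and takes essentially the same route as the paper: both directions pivot on comparing $R$ with the kernel pair of $A\to A/R$, using Proposition \ref{prop:effective-additive-pmag} to get additivity and the injectivity of $\alpha$ to transfer the kernel-pair property between $A\dslash R$ (where additivity gives it in $\pmag$) and $A/R$. The only difference is stylistic: you identify the kernel pair concretely as a sub-partial-monoid of $A\times A$, whereas the paper verifies the same universal property with test objects (morphisms from $\fun$ in the forward direction, arbitrary $B$ in the converse).
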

\begin{proof}
Suppose that $R$ is effective. Then we can show that $R$ is additive by the same
argument as in the proof of Proposition \ref{prop:effective-additive-pmag}.
It follows that $R\rightrightarrows A$ is the kernel pair of the canonical morphism 
$\pi \colon A\to A/R.$
We need to show that $\alpha$ is injective. Assume, on the contrary, that
$\alpha$ is not injective and there exist $a_1, a_2 \in A$ such that $[a_1] \neq [a_2]$
in $A\dslash R$ but $\alpha([a_1]) = \alpha([a_2]).$ Let $f_i \colon \fun \to A$ 
be a morphism given by $f_i (1) = a_i\, (i=1,2).$ Since $R\rightrightarrows A$ is the kernel pair
of $\pi \colon A\to A/R,$ there exists a morphism $f\colon \fun \to R$ such that 
$f(1) = (f_1(1), f_2(1)) = (a_1, a_2),$ a contradiction. This proves that $\alpha$ is injective.

Conversely, suppose that $R$ is additive and the canonical map 
$\alpha \colon A\dslash R \to A/R$ is injective.
To show that $R\rightrightarrows A$ is the kernel pair of $\pi\colon A\to A/R,$
let $f_1, f_2\colon B\to A$ be morphisms such that $\pi \circ f_1 = \pi \circ f_2.$
If $\pi'\colon A\to A\dslash R$ denotes the canonical morphism, 
$\pi'\circ f_1 = \pi'\circ f_2,$ since $\alpha$ is injective.
Since $R\rightrightarrows A$ is the kernel pair of $\pi'$ in $\pmag,$
we have a unique morphism $f\colon B\to R$ such that $f = (f_1, f_2)$ in $\pmag.$
Since $B$ and $R$ are partial monoids, $f$ is a morphism in $\pmon,$
which is uniquely determined.
\end{proof}
\begin{cor}
	Let $R$ be an additive equivalence relation on a partial monoid $A.$
	The $R$ is effective if and only if the following condition holds:
	\[
		\mbox{(Condition)} \left\{
		\begin{array}{l}
		\mbox{~if~} aRa', bRb', cRc', (a,b) \in A_2, (b',c')\in A_2,\\
		(a+b)Rx, (b'+c')Rx', (x,c) \in A_2 \mbox{~and~} (a',x') \in A_2\\
		\mbox{~then~}(x+c) R (a'+x').
		\end{array}\right.
	\]
\end{cor}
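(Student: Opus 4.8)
The plan is to deduce this from the preceding Proposition. Since $R$ is assumed additive, that Proposition reduces effectiveness of $R$ to injectivity of the canonical map $\alpha\colon A\dslash R\to A/R$, so it suffices to prove that $\alpha$ is injective if and only if the stated Condition holds. Write $P=A\dslash R$. First I would observe that the Condition is nothing but three-term weak associativity of $P$: reading $aRa'$, $bRb'$, $cRc'$ as $[a]=[a']$, $[b]=[b']$, $[c]=[c']$ in $P$, the hypotheses say exactly that $([a]+[b])+[c]$ and $[a]+([b]+[c])$ are both defined in $P$ (the pairs $(a,b)$ and $(a',x')$, respectively $(b',c')$, witnessing the two bracketings), while $(x+c)R(a'+x')$ is precisely the assertion that these two values coincide. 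Next, recall that $A/R=(A\dslash R)_{ass}=P_{ass}$ is by definition a partial submonoid of $P_{mon}$, and that $\alpha$ is the corestriction of $\mu\colon P\to P_{mon}$ to $P_{ass}$; since the inclusion $P_{ass}\hookrightarrow P_{mon}$ is injective, $\alpha$ is injective if and only if $\mu$ is injective. Thus the statement becomes: $\mu\colon P\to P_{mon}$ is injective if and only if $P$ is three-term weakly associative.

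The forward implication (effective $\Rightarrow$ Condition) I would prove by contraposition, and I expect it to be routine. If the Condition fails there are elements with $u:=([a]+[b])+[c]\neq [a]+([b]+[c])=:v$ in $P$, both sides defined. Using that $\mu$ is a homomorphism and applying the defining relations of the monoid completion $P_{mon}=\NN[P]/\!\sim$ to the summable pairs $([a],[b])$ and $([b],[c])$, both $\mu(u)$ and $\mu(v)$ equal the class of $[a]\dotplus[b]\dotplus[c]$. Hence $\mu$, and therefore $\alpha$, fails to be injective, so by the preceding Proposition $R$ is not effective.

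The reverse implication is the crux. Assuming the Condition, I must show that distinct elements of $P$ remain distinct in $P_{mon}$. The approach I would take is to identify $P_{mon}$ with $A_{mon}/\bar R$, where $\bar R$ is the monoid congruence on $A_{mon}$ generated by $R$: both carry the same universal property, namely the universal abelian monoid receiving a homomorphism from $A$ that collapses $R$, because $A\dslash R$ is the coequalizer of $R\rightrightarrows A$ in $\pmag$. Under this identification, injectivity of $\mu$ is equivalent to the statement that, for $a,a'\in A$, whenever $\mu_A(a)$ and $\mu_A(a')$ are joined by a zig-zag of elementary $\bar R$-moves in $A_{mon}$, one already has $aRa'$. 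I would establish this by induction on the length of the zig-zag, using the three-term Condition as the contraction step; equivalently, one first shows that the Condition upgrades three-term associativity of $P$ to full weak associativity and then that such a quotient embeds into its completion via an evaluation invariant.

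I expect this induction to be the main obstacle: with only a three-term hypothesis one must control zig-zags of unbounded length, and the partiality of the addition means that the intermediate sums occurring along a chain need not be defined. The technical point to get right is that commutativity, the additivity of $R$, and the strong two-sided associativity available in the genuine partial monoid $A$ are jointly strong enough that any single elementary detour through a sum can be realised by a suitable choice of representatives upstairs in $A$, so that it falls under the configuration of the Condition and shortens the chain. Once this localisation is secured, the collapse of the zig-zag is bookkeeping.
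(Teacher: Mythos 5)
Your reduction is certainly the intended one (the paper states this corollary without proof, as a direct unwinding of the preceding Proposition), and the first half of your proposal is sound: granting additivity, effectiveness is equivalent to injectivity of $\alpha\colon A\dslash R\to A/R$; the Condition is exactly three-term weak associativity of the quotient partial magma $P=A\dslash R$; $\alpha$ is injective iff $\mu\colon P\to P_{mon}$ is; and your contrapositive argument for ``effective $\Rightarrow$ Condition'' is correct. The problem is the converse direction, where you delegate the entire content to a zig-zag induction that you flag as ``the main obstacle'' and never carry out. This is not a repairable bookkeeping issue: under the literal reading of the Condition (which is also your reading), the implication ``Condition $\Rightarrow$ effective'' is \emph{false}, so no induction of the kind you describe can close the argument.

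Here is a concrete obstruction. Let $A$ be the wedge at $0$ of six four-element partial monoids $\Set{0,x,y,x+y}$ (in each, only the indicated sum and sums with $0$ are defined), with elements named so that the six nontrivial sums are
\[
a_1+b_1=p_1,\quad p_2+c_1=q_1,\quad q_2+d_1=u,\quad c_2+d_2=r_1,\quad b_2+r_2=w_1,\quad a_2+w_2=v.
\]
No element of $A$ is both a sum and summable with a nonzero element, so the associativity axiom holds vacuously and $A$ is a partial monoid. Let $R$ be the equivalence relation whose nontrivial classes are $\Set{a_1,a_2},\Set{b_1,b_2},\Set{c_1,c_2},\Set{d_1,d_2},\Set{p_1,p_2},\Set{q_1,q_2},\Set{r_1,r_2},\Set{w_1,w_2}$, with $R_2=(R\times R)\cap A_2$; one checks that $R$ is additive, since the only componentwise-summable $R$-related pairs of pairs are identical pairs or pairs involving $0$. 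In $P=A\dslash R$ the defined sums are exactly $\bar a+\bar b=\bar p$, $\bar p+\bar c=\bar q$, $\bar q+\bar d=\bar u$, $\bar c+\bar d=\bar r$, $\bar b+\bar r=\bar w$, $\bar a+\bar w=\bar v$. The Condition holds: when one of the classes is $0$ it reduces to additivity, and for nonzero classes its hypotheses are never satisfied because no triple of classes has \emph{both} bracketings defined (for instance $(\bar p+\bar c)+\bar d$ is defined but $\bar p+(\bar c+\bar d)=\bar p+\bar r$ is not, since no representative of $\bar p$ is summable with one of $\bar r$). Yet $R$ is not effective: if $R$ were the kernel pair of some $f\colon A\to B$ with $B$ a partial monoid, then $f$ identifies $R$-related elements, and strict associativity in $B$ forces
\[
f(u)=\bigl(\bigl(f(a_1)+f(b_1)\bigr)+f(c_1)\bigr)+f(d_1)
=f(a_1)+\bigl(f(b_1)+\bigl(f(c_1)+f(d_1)\bigr)\bigr)=f(v),
\]
so $(u,v)$ lies in the kernel pair although $(u,v)\notin R$; equivalently $\alpha(\bar u)=\alpha(\bar v)$ while $\bar u\neq\bar v$. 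This is exactly a length-four zig-zag in $\NN[P]$ that never passes through any configuration covered by the three-term Condition, which is why your ``localisation of elementary detours'' step cannot exist: partiality really does allow the intermediate sums to be undefined, and the ambient associativity of $A$ does not supply representatives making them defined. Any true statement in this direction needs a stronger hypothesis than the printed Condition --- for example the axiom-(c)-style demand that if one bracketing of a triple of classes is defined then so is the other and they agree (which makes $P$ itself a partial monoid, and for which an evaluation-invariance argument on $\NN[P]$ does prove injectivity of $\mu$) --- so the gap you flagged is not the hard part of a correct proof; it is the point at which the claimed implication fails.
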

\begin{cor}
	If $\Set{R_\lambda}$ is a family of effective equivalence relations on a partial monoid $A.$
	Then $\cap_\lambda R_{\lambda}$ is an effective equivalence relation on $A.$
\end{cor}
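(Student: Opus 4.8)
The plan is to produce $R:=\cap_\lambda R_\lambda$ directly as a kernel pair, exploiting that $\pmon$ is complete (Section \ref{sec:pmag_pmon_complete_cocomplete}). Since each $R_\lambda$ is effective, I would choose for every $\lambda$ a homomorphism $f_\lambda\colon A\to B_\lambda$ whose kernel pair is $R_\lambda$. Forming the product $B=\prod_\lambda B_\lambda$ in $\pmon$ and the induced homomorphism $f\colon A\to B$ characterized by $\pi_\lambda\circ f=f_\lambda$ (where $\pi_\lambda$ is the $\lambda$-th projection), the claim is that the kernel pair of $f$ is exactly $R$; this immediately gives effectiveness of $R$.

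To verify the claim I would first compare underlying sets. Because the limits of $\pmag$ are constructed on underlying sets and, by Section \ref{sec:pmag_pmon_complete_cocomplete}, these limits are also the limits of $\pmon$, the kernel pair $A\times_B A$ has underlying set $\Set{(a,a')\in A\times A | f(a)=f(a')}$. Now $f(a)=f(a')$ holds if and only if $f_\lambda(a)=f_\lambda(a')$ for every $\lambda$, which, since $R_\lambda$ is the kernel pair of $f_\lambda$, is equivalent to $(a,a')\in R_\lambda$ for all $\lambda$, i.e.\ to $(a,a')\in R$. For the partial magma structure, the kernel pair carries the structure induced from $A\times A$ as a subobject, so two of its elements are summable precisely when they are summable in $A\times A$ and their sum again lies in the kernel pair; this is exactly the additive structure with $R_2=(R\times R)\cap A_2$. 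Hence the kernel pair of $f$ coincides with $R$, and $R$ is effective.

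The only real subtlety is the bookkeeping that the pullback underlying a kernel pair is genuinely computed on underlying sets and that its induced summability matches the additive structure of $\cap_\lambda R_\lambda$; both rest on the agreement of limits in $\pmon$ and $\pmag$ recorded in Section \ref{sec:pmag_pmon_complete_cocomplete}. Should one prefer an argument closer to the preceding corollary, I would instead first observe that an intersection of additive equivalence relations is additive (if $a_1 R a_1'$ and $a_2 R a_2'$ with $(a_1,a_2),(a_1',a_2')\in A_2$, then the same relations hold in each $R_\lambda$, whence $(a_1+a_2)R_\lambda(a_1'+a_2')$ and so $(a_1+a_2)R(a_1'+a_2')$), and then verify the displayed (Condition) for $R$ by pushing each of its hypotheses, all of which are $R$-relations, down to an arbitrary $R_\lambda$ and invoking effectiveness of $R_\lambda$; since the conclusion $(x+c)R_\lambda(a'+x')$ then holds for every $\lambda$, it holds for $R$, and the preceding corollary yields effectiveness.
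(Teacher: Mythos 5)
Both of your arguments are correct, and your primary one takes a genuinely different route from the paper. The paper states this corollary without proof: it is meant to follow at once from the preceding corollary, which is exactly your fallback argument --- each $R_\lambda$, being effective, is additive and satisfies (Condition); hence $R=\bigcap_\lambda R_\lambda$ is additive, every hypothesis of (Condition) for $R$ is in particular a hypothesis for each $R_\lambda$, and the conclusions $(x+c)\,R_\lambda\,(a'+x')$ intersect to give the conclusion for $R$. Your primary argument instead exhibits $R$ directly as a kernel pair: choose $f_\lambda\colon A\to B_\lambda$ with kernel pair $R_\lambda$, form $f\colon A\to\prod_\lambda B_\lambda$, and check that its kernel pair is $R$. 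This is sound, since $\pmon$ has all small limits and they are computed as in $\pmag$, on underlying sets with the induced summability (\S\ref{sec:pmag_pmon_complete_cocomplete}). One point you should make explicit in the main argument rather than only in the fallback: to identify the summability structure of the kernel pair of $f$ (namely, summable iff summable in $A\times A$) with the structure of the intersection (namely, $R_2=\bigcap_\lambda(R_\lambda)_2$), you need each $R_\lambda$ to be additive, which follows from effectiveness by the proposition preceding the corollary; without this, the two subobjects of $A\times A$ could a priori have the same underlying set but different summability. As for what each approach buys: the product route is independent of the (Condition) criterion and of the injectivity of $A\dslash R\to A/R$, and it shows the statement is an instance of the general categorical fact that an intersection of kernel pairs is the kernel pair of the induced map into a product; the (Condition) route is elementary and element-wise, stays entirely inside the paper's calculus, and needs no infinite products.
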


The next theorem is not used in the following part of this paper.

\begin{thm}
	The category of partial magmas is regular.
\end{thm}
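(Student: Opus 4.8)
The plan is to verify the three standard conditions defining a regular category: that $\pmag$ has all finite limits, that every kernel pair admits a coequalizer, and that regular epimorphisms are stable under pullback. The first two are essentially already available. Finite limits exist because $\pmag$ has all small limits (Section~\ref{sec:pmag_pmon_complete_cocomplete}). For the second, I would note that the kernel pair $R$ of a morphism $f\colon A\to B$, computed as the equalizer inside $A\times A$ of the two composites to $B$, carries the inherited substructure $R_2=(R\times R)\cap (A\times A)_2$, so it is an additive equivalence relation; by Proposition~\ref{prop:effective-additive-pmag} it is effective, its coequalizer is $A\dslash R$, and $R$ is recovered as the kernel pair of $A\to A\dslash R$. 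Thus the whole difficulty is concentrated in pullback-stability of regular epimorphisms.

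First I would pin down the regular epimorphisms concretely. For $f\colon A\to B$ with kernel pair $R$, the induced map $A\dslash R\to B$, $[a]\mapsto f(a)$, is injective (since $[a]=[a']$ iff $f(a)=f(a')$) and hence a monomorphism, while $A\to A\dslash R$ is a regular epimorphism by the remark above. This exhibits a (regular epi, mono) factorization of every morphism, and shows that $f$ is a regular epimorphism precisely when $A\dslash R\to B$ is an isomorphism. Unwinding the definition of $(A\dslash R)_2$, this holds if and only if $f$ is surjective \emph{and} every defined sum in $B$ lifts along $f$: for every $(b_1,b_2)\in B_2$ there exist $a_1,a_2\in A$ with $f(a_i)=b_i$ and $(a_1,a_2)\in A_2$. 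Surjectivity alone is insufficient; for instance $\fun\to\BB$ sending $1\mapsto 1$ is bijective but not a regular epimorphism, since $1+1$ is defined in $\BB$ yet not in $\fun$.

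With this description, pullback-stability is a direct element chase. Let $f\colon A\to B$ be a regular epimorphism and $g\colon C\to B$ arbitrary, and form the pullback $P=A\times_B C=\Set{(a,c)\mid f(a)=g(c)}$ with the submagma structure inherited from $A\times C$, together with the projection $\pi\colon P\to C$. Surjectivity of $\pi$ follows at once from surjectivity of $f$. For the lifting clause, take $(c_1,c_2)\in C_2$; then $(g(c_1),g(c_2))\in B_2$, so the lifting property of $f$ produces $a_1,a_2$ with $f(a_i)=g(c_i)$ and $(a_1,a_2)\in A_2$. Then $(a_i,c_i)\in P$, and $((a_1,c_1),(a_2,c_2))\in P_2$ because $(a_1,a_2)\in A_2$ and $(c_1,c_2)\in C_2$; this pair maps under $\pi$ to $(c_1,c_2)$. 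Hence $\pi$ is surjective and sum-lifting, i.e. a regular epimorphism, which is what regularity requires.

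The only genuinely delicate point — and the step I expect to be the main obstacle — is the correct identification of the regular epimorphisms. Since in $\pmag$ epimorphisms need not be surjective and surjections need not be regular, one must resist collapsing these three classes; the sum-lifting clause is exactly what forces the monomorphism part of the factorization to be an \emph{isomorphism} of partial magmas rather than merely a bijective homomorphism. Once that characterization is isolated, every remaining step is a short computation that never leaves the concrete level at which the partial magma structures are defined.
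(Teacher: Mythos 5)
Your proof is correct, and it takes a genuinely different route from the paper's, though both rest on the same core fact. The paper argues directly: it replaces a regular epimorphism $f\colon A\to B$ by the quotient map $\pi\colon A\to A\dslash K$ of its kernel pair $K$ (asserting $B\cong A\dslash K$), forms the pullback $h\colon P\to C$, and then verifies by hand that $h$ is the coequalizer of its own kernel pair: surjectivity of $h$ gives a candidate factoring map $\tilde m$ on underlying sets, and the fact that sums in $A\dslash K$ are precisely images of summable pairs of $A$ is used to show $\tilde m$ is a homomorphism. You instead promote that fact to an intrinsic characterization of regular epimorphisms --- $f$ is a regular epi iff it is surjective \emph{and} sum-lifting --- proved via the comparison map $A\dslash R\to B$, after which pullback stability becomes a two-line element chase with no further appeal to universal properties. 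The mathematical engine is identical (both arguments hinge on $(A\dslash R)_2$ consisting exactly of classes of summable pairs, which is what your lifting clause expresses), but your decomposition buys more: the characterization lemma is reusable, it makes the (regular epi, mono) factorization of an arbitrary morphism explicit, and your example $\fun\to\BB$ cleanly documents why surjectivity alone does not suffice --- a distinction the paper flags for epimorphisms but never spells out for regular ones. What the paper's version buys is economy: it never needs to classify regular epimorphisms, only to handle the one pulled-back map in front of it. One point you got right that deserves to stay explicit in any write-up: the kernel pair and the pullback must carry the full induced substructure of the ambient product (this is what makes the kernel pair additive in the sense of Proposition \ref{prop:effective-additive-pmag}, and what justifies the membership $((a_1,c_1),(a_2,c_2))\in P_2$); with a smaller partial magma structure on $P$ the universal property would fail and both proofs would break.
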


\begin{proof}
	As noted in \S\ref{sec:pmag_pmon_complete_cocomplete}, $\pmag$ is 
	complete and cocomplete. To show that a pullback of a regular epimorphism 
	is a regular epimorphism, let $f\colon A\to B$ be a regular epimorphism.
	If $K$ is the kernel pair of $f,$ then it is readily checked that $B \cong A\dslash K.$
	So we may assume that $B = A\dslash K$ and $f=\pi \colon A\to A\dslash K.$
	Let 
	\[
		\begin{tikzcd}
			P\ar{r}\ar{d}{h} & A \ar{d}{\pi}\\
			C \ar{r}{g} & A\dslash K
		\end{tikzcd}
	\]
	be the pullback of $\pi$ along a morphism $g\colon C\to A\dslash K.$
	Since $\pi$ is a surjective map, so is $h.$ 
	Let $L$ be the kernel pair of $h.$ 
	We show that the diagram
	\[
		\begin{tikzcd}
			L \ar[shift left = 0.3em]{r}{l_1}\ar[shift right = 0.3em]{r}[below]{l_2}& P \ar{r} & C.
		\end{tikzcd}
	\]
	is a coequalizer diagram. For that purpose, suppose that there exists a morphism 
	$m\colon P\to D$ such that $m\circ l_1 = m\circ l_2.$ Since $h$ is surjective,
	we have a map $\tilde{m}\colon C\to D$ of sets which satisfies $\tilde{m} \circ h= m.$
	To show that $\tilde{m}$ is a homomorphism of partial magmas, suppose
	$(c_1, c_2) \in C_2.$ Then $(g(c_1), g(c_2)) \in (A\dslash K)_2.$ So we can take a summable
	pair $(a_1, a_2) \in A_2$ such that $(\pi(a_1), \pi(a_2)) =(g(c_1), g(c_2)).$
	Thus, $((a_1, c_1) , (a_2, c_2)) \in P_2,$ which implies that 
	$(\tilde{m}(c_1), \tilde{m}(c_2)) = (m(a_1, c_1) , m(a_2, c_2)) \in D_2.$
	Therefore $\tilde{m}$ is a unique homomorphism which satisfies $\tilde{m} \circ h= m.$
\end{proof}
\section{Partial Rings}

\subsection{Partial Rings}
\begin{defn}[partial ring]
	A {\bf partial ring} is a partial monoid $A$ with a bilinear operation 
	$\cdot,$ called a multiplication which is associative, commutative,
	and have a unit $1.$ Here, bilinearity of $\cdot$ is, by definition, equivalent to the following 
	condition:
	\begin{enumerate}
		\item $0\cdot a = 0$ for all $a\in A$ and
		\item $(a_1, a_2) \in A_2$ implies $(a_1\cdot x, a_2\cdot x) \in A_2$
		and $a_1\cdot  x + a_2\cdot  x = (a_1+a_2)\cdot x$ for all $x\in A.$
	\end{enumerate}
\end{defn}

\begin{defn}
	Let $A$ and $B$ be partial rings. A map $f\colon A\to B$ is a 
	homomorphism of partial rings if it is a homomorphism of underlying partial monoids and
	satisfies	$f(1) = 1$ and $f(ab) = f(a)f(b)$ for all $a, b\in A.$
	The category of partial rings are denoted by $\pring.$
\end{defn}

\begin{rem}
	A partial ring is nothing but a commutative monoid object in the symmetric monoidal category
	$(\pmon, \otimes , \fun).$
\end{rem}

\begin{example}
	A partial ring of order 2 is isomorphic to one of 
	$\fun, \FF_2$ and $\BB.$
\end{example}

\subsection{A partial ring given by generators and a summability list}
	In this section let $n$ be a fixed positive integer and $N=\NN[x_1,\dots, x_n]$ be the set of the polynomials of 
	indeterminates $x_1, \dots, x_n$ with coefficients in $\NN.$
	Let $S = \Set{s_1, \dots, s_r }$ be a subset of $N.$
	Then let
	\[
		\fun \angles{x_1,\dots, x_n | \exists s_1, \dots, s_r }
	\]
	denote the smallest partial subring of $N$ which contains every subsum of
	one of $s_1, \dots , s_r,$ and in which every pair $(a,b)$
	such that $a+b$ is a subsum of one of $s_1, \dots , s_r$ is summable.
	
	If $c_1, \dots, c_n$ are elements of a partial ring $A,$
	and $s \in N,$ then $s(c_1, \dots, c_n) \in A$ denotes the value of the polynomial $s$ as usual.
	Let $s = m_1 + \dots + m_r$ be the unique factorization of $s$ into a sum of monomials with coefficients 1.
	We say that $s(c_1, \dots, c_n)$ {\bf can be calculated in $A$} if $(m_{i_1}, \dots, m_{i_s}) \in A_s$ for
	all subsets $\set{i_1,\dots , i_s} \subseteq \set{1, \dots, r}.$
	
	\begin{prop}\label{prop:pring_hom_by_generators}
		Let $c_1, \dots, c_n$ be elements of a partial ring $A$ and $S = \Set{s_1, \dots, s_r }$ be a subset of $N.$
		If $s_i (c_1, \dots, c_n)$ can be calculated in $A$ for all $i=1,\dots , r, $ then there exists a unique
		partial ring homomorphism
		\[
			\varphi \colon \fun \angles{x_1,\dots, x_n | \exists s_1, \dots, s_r }\to A
		\]
		such that $\varphi(x_i) = c_i\,(i=1,\dots, n).$
	\end{prop}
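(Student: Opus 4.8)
The only candidate is the evaluation map $\varphi(p)=p(c_1,\dots,c_n)$, so I would begin by disposing of uniqueness. Write $P$ for $\fun\angles{x_1,\dots,x_n\mid \exists s_1,\dots,s_r}$. Any partial ring homomorphism sending $x_i$ to $c_i$ must send a monomial $m=x_{i_1}\cdots x_{i_k}$ to $c_{i_1}\cdots c_{i_k}=m(c_1,\dots,c_n)$ by multiplicativity, and must send a summable tuple of monomials to the corresponding sum in $A$ by additivity. Since every element of $P\subseteq N$ is a $P$-sum of monomials in the $x_i$, a homomorphism out of $P$ is completely determined by its values on $x_1,\dots,x_n$; hence it is forced to be evaluation, which settles uniqueness.

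For existence the plan is to realize evaluation through a minimality argument. I would consider the subset
\[
B = \{\, p \in N : p(c_1,\dots,c_n)\ \text{can be calculated in}\ A \,\},
\]
with multiplication inherited from $N$ and summability relation $B_2=\{(p,q)\in B\times B : (p(c),q(c))\in A_2\}$. The goal is to show that $B$ is a partial subring of $N$ satisfying the two conditions defining $P$ — it contains every subsum of each $s_j$ and makes the complementary-subsum pairs summable (and it contains $x_1,\dots,x_n$, each being a single monomial) — and that evaluation $\mathrm{ev}\colon B\to A,\ p\mapsto p(c)$, is a partial ring homomorphism. Granting this, minimality of $P$ gives $P\subseteq B$ with $P_2\subseteq B_2$, so $\mathrm{ev}$ restricts to the required $\varphi$ with $\varphi(x_i)=c_i$.

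The verification that $\mathrm{ev}$ respects the partial addition, the unit, and the generation conditions transfers directly from $A$. If $(p,q)\in B_2$ then $(p+q)(c)=p(c)+q(c)$ is defined, whence $p+q\in B$; the partial-monoid axioms (a)--(c) for $B_2$ are inherited from those for $A_2$ through the identity $(p+q)(c)=p(c)+q(c)$, and bilinearity likewise transfers. The hypothesis that each $s_j(c_1,\dots,c_n)$ can be calculated in $A$ says exactly that every subsum of $s_j$ lies in $B$ and that complementary subsums constitute pairs in $B_2$. The one genuinely nontrivial point — the step I expect to be the main obstacle — is closure of $B$ under multiplication.

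To handle multiplication I would first record a consequence of strict associativity and commutativity: in a commutative partial monoid, if a finite sum $\sum_k a_k$ is defined then so is every subsum $\sum_{k\in K'}a_k$, obtained by peeling off one summand at a time via axiom (c), reordering by commutativity as needed. Now suppose $p(c)$ and $q(c)$ can both be calculated, and write $p=\sum_i m_i$, $q=\sum_j m'_j$ for the monomial decompositions. In particular $p(c)$ and $q(c)$ are defined, and iterated bilinearity expands the product as $p(c)\,q(c)=\sum_{i,j}m_i(c)\,m'_j(c)$. Thus the full double sum is defined, so by the subsum principle every one of its subsums is defined; this is precisely the assertion that $(pq)(c)$ can be calculated, i.e. $pq\in B$. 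The same identity gives $\mathrm{ev}(pq)=\mathrm{ev}(p)\,\mathrm{ev}(q)$, so $\mathrm{ev}$ is a homomorphism on $B$, and minimality then produces $\varphi$ as above.
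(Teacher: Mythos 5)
Your existence argument is sound, but your uniqueness argument has a genuine gap. The sentence ``Since every element of $P\subseteq N$ is a $P$-sum of monomials in the $x_i$'' asserts exactly the paper's \emph{factorization property}, and this is the crux of the whole proof: the paper devotes two lemmas and an explicit inductive construction of $P$ as a union $\bigcup_k X^{(k)}$ to establishing it. It is not obvious, because $P$ is defined only by a minimality condition: its elements arise from the subsums of the $s_j$ by alternating multiplication steps and associative-closure steps, and an associative-closure step adjoins new elements $b+c$ (whenever some $(a+b)+c$ was already computable) together with new summable pairs; a priori nothing guarantees that such new elements can still be written as sums of monomials \emph{carried out inside} $P$. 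So, as written, uniqueness rests on an unproven claim --- precisely the claim the paper works hardest to prove. The gap is repairable with the same minimality device you use for existence: either prove the factorization property by minimality (the set $Q$ of elements of $P$ that are factorized in $P$, with $Q_2 = P_2\cap(Q\times Q)$, is a partial subring of $N$ containing all subsums of the $s_j$ with complementary pairs summable, hence $P\subseteq Q$), or prove uniqueness directly by an equalizer argument: if $\varphi_1,\varphi_2$ agree on the $x_i$, then $E=\{p\in P : \varphi_1(p)=\varphi_2(p)\}$ with $E_2=P_2\cap(E\times E)$ is again such a partial subring, so $P\subseteq E$.

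Your existence proof, by contrast, is correct and takes a genuinely different route from the paper. The paper builds $P$ explicitly as an increasing filtration $Y^{(0)}\subseteq X^{(0)}\subseteq Y^{(1)}\subseteq\cdots$ with the factorization property and extends $\varphi$ step by step, using factorization to check each extension is well defined; you instead exhibit one large partial subring $B\subseteq N$ of polynomials calculable at $(c_1,\dots,c_n)$, check that evaluation is a homomorphism on it, and invoke minimality of $P$ to get $P\subseteq B$ and $P_2\subseteq B_2$. This bypasses the factorization machinery entirely for existence, which is a real simplification. Two points should be tightened: membership of $p+q$ in $B$ already requires your subsum principle, not just that $p(c)+q(c)$ is defined (being in $B$ means \emph{all} subsums of the monomial decomposition of $p+q$, mixing monomials of $p$ and of $q$, are summable --- this follows because the full sum is defined under the parenthesization $(\,$sum of $p\,)+(\,$sum of $q\,)$, hence under all); and in the product step one should note that the monomial multiset of $pq$ in $N$ is exactly the multiset of products $m_i m'_j$ (coincident monomials merging into coefficients), so that summability of the double sum really is the statement that $(pq)(c)$ can be calculated.
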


	The remainder of this section is devoted to a proof of the above proposition.
	Let $W$ be a partial submagma  of the underlying partial monoid of $N.$ 
	We say that an element $w\in W$
	{\bf is factorized in $W$} if the unique factorization $w = m_1 + \dots + m_r$
	in $N$ can be calculated in	$W$ under some appropriate reordering and 
	supplement of parentheses.
	We say that $W$ {\bf has the factorization property} if every element of $W$ is
	factorized in $W.$
	
	\begin{lem}
		Let $B \subseteq N$ be a partial submagma of the underlying partial monoid of $N.$ 
		If $B$ has the factorization property,
		then so is its associative closure $B_{ass, N}$ in $N.$
	\end{lem}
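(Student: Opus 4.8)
The plan is to prove the statement by induction along the stages $B^{(0)} \subseteq B^{(1)} \subseteq \cdots$ of the inductive construction of $B_{ass, N}$, showing that each $B^{(n)}$ already has the factorization property, and then passing to the union $B_{ass, N} = \cup_{n\geq 0} B^{(n)}$ at the very end. The reduction to the union works because the factorization property is a finitary condition: whether an element $w$ is factorized uses only a single parenthesized computation of $w$ from its monomials, and every partial sum and every summable pair occurring in such a computation tree inside $B^{(n)}$ also occurs in $B_{ass, N}$, since $B^{(n)}_2 \subseteq (B_{ass, N})_2$ and all sums agree with those of $N.$ Hence once every $B^{(n)}$ has the factorization property, any $w \in B_{ass, N}$ lies in some $B^{(n)}$, is factorized there, and is therefore factorized in $B_{ass, N}.$

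The base case $B^{(0)} = B$ holds by hypothesis. For the inductive step I would assume $B^{(n-1)}$ has the factorization property and take an element $w \in B^{(n)}.$ If $w \in B^{(n-1)}$ it is already factorized there, hence in $B^{(n)}.$ Otherwise $w = b + c$ for some triple $(a,b,c)$ satisfying condition $(*);$ in particular $b, c \in B^{(n-1)}$, the pair $(b,c)$ belongs to $B^{(n)}_2$ by the very definition of $B^{(n)}_2$, and $b + c = w \in B^{(n)}.$ By the inductive hypothesis both $b$ and $c$ are factorized in $B^{(n-1)}$, that is, each admits a parenthesized computation from its monomials whose every partial sum lies in $B^{(n-1)}$ and whose every addition is summable there. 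I would then glue these two computation trees by adjoining a new root whose two children are the computations of $b$ and of $c$; the root performs the addition $b + c$, which is legitimate in $B^{(n)}$ since $(b,c) \in B^{(n)}_2$ and $b+c \in B^{(n)}.$ All interior nodes inherited from the two subtrees lie in $B^{(n-1)} \subseteq B^{(n)}$, so the whole tree is a valid computation inside $B^{(n)}.$

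It remains to check that this glued tree really computes the monomial factorization of $w.$ Here I would use that $N$ is free abelian on the monomials, so the unique monomial decomposition of $w = b + c$ is exactly the multiset union of those of $b$ and of $c$; the leaves of the glued tree are precisely the monomials of $b$ followed by those of $c$, which is the monomial factorization of $w$ up to the reordering permitted in the definition. Thus $w$ is factorized in $B^{(n)}$, completing the induction. The only delicate points, and the ones I would be most careful about, are the bookkeeping around condition $(*)$ — namely that satisfying $(*)$ is exactly what puts $(b,c)$ into $B^{(n)}_2$ and $b+c$ into $B^{(n)}$ — and the observation that a factorization is witnessed by a single computation tree and hence survives the enlargements $B^{(n-1)} \subseteq B^{(n)} \subseteq B_{ass, N}$; everything else is routine.
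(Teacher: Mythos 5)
Your proof is correct and takes essentially the same route as the paper's: induction along the stages $B^{(n)}$ of the inductive construction of $B_{ass,N}$, where a new element $w=b+c$ is handled by taking parenthesized computations of $b$ and of $c$ in $B^{(n-1)}$ (inductive hypothesis) and joining them with the final addition, which is legitimate because $(b,c)\in B^{(n)}_2$. The extra care you take --- justifying the passage to the union $\cup_n B^{(n)}$ via finitariness of the factorization property, and noting that the monomials of $b+c$ are the multiset union of those of $b$ and $c$ in the free abelian monoid $N$ --- only makes explicit what the paper's terser proof leaves implicit.
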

	
	\begin{proof}
		Recalling the inductive construction of the associative closure,
		it is sufficient to show that if $B^{(n-1)}$ has the factorization property then so is $B^{(n)}.$
		So assume that $B^{(n-1)}$ has the factorization property.
		New elements in $B^{(n)}$ are of the form $b+c$ where there exists $a\in B^{(n-1)}$ such that
		$(a+b)+c$ can be calculated in $B^{(n-1)}.$
		Let $b= m_1 + \dots + m_r$ and $c = m'_1 + \dots + m'_s$ be the unique factorization of $b$ and $c$
		in $N.$ By assumption, there exists a way to supply parentheses to these formula so that they can be
		calculated in $B^{(n-1)}.$ 
		Using these supplement of parentheses, $b+c$ can be calculated in $B^{(n)}.$
		Thus $B^{(n)}$ has the factorization property.
	\end{proof}

	\begin{lem}\label{lem:inc_seq_of_submonoids_unique_factorization}
		There exists an increasing sequence 
		$X^{(0)}\subseteq X^{(1)}\subseteq  \dots$ of partial submonoids of $N$
		such that
		\begin{enumerate}
			\item $X^{(i)}$ has the factorization property for all $i$,
			\item if $a,b \in X^{(i)}$ then $ab \in X^{(i+1)}$ for all $i$ and
			\item $\fun \angles{x_1,\dots, x_n | \exists s_1, \dots, s_r } = \cup_{i\geq 0} X^{(i)}$ as a partial monoid.
		\end{enumerate}
	\end{lem}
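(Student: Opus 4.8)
The plan is to build the $X^{(i)}$ by alternately taking associative closures and adjoining products, arranging each stage so that the preceding lemma on associative closures applies directly. For the base, I would let $B_0$ be the partial submagma of $N$ whose underlying set consists of $0$, of $1$, and of all subsums of $s_1, \dots, s_r$, and in which a pair $(u,v)$ is declared summable exactly when $u$, $v$ and $u+v$ are all subsums of one common $s_j$. Reading off the monomials of a subsum from left to right shows that every element of $B_0$ is factorized in $B_0$, so $B_0$ has the factorization property; I then set $X^{(0)} = (B_0)_{ass, N}$, which has the factorization property by the previous lemma and is a partial submonoid of $N$. For the inductive step, given $X^{(i)}$ I would let $G_{i+1}$ be the partial submagma of $N$ whose underlying set is $X^{(i)}$ together with all subsums of all products $ab$ for $a, b \in X^{(i)}$, and whose summability relation consists of that of $X^{(i)}$ together with all pairs $(u,v)$ for which $u$, $v$ and $u+v$ are subsums of one common product $ab$. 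Again left-to-right bracketing shows $G_{i+1}$ has the factorization property, and I set $X^{(i+1)} = (G_{i+1})_{ass, N}$.

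With this definition most clauses are immediate. Each $X^{(i+1)}$ is a partial submonoid and has the factorization property by the previous lemma, which gives clause (1); and since $X^{(i)} \subseteq G_{i+1}$, the sequence is increasing. For clause (2), if $a, b \in X^{(i)}$ then $ab$ is the full subsum of itself, hence lies in $G_{i+1} \subseteq X^{(i+1)}$. The real content is clause (3), and the tool I would isolate first is the following \emph{subsum property} of commutative partial monoids: if $u_1 + \dots + u_m$ can be calculated in a partial monoid $A$, then so can every subsum $\sum_{i \in I} u_i$ with $I \subseteq \{1, \dots, m\}$. This follows from the associativity condition (c) together with commutativity — the three-term case is a direct application of (c), and the general case follows by reordering and induction — and it is exactly what forces subsums of products to lie in the ambient partial ring.

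For clause (3) I would prove both inclusions with $R := \fun\angles{x_1, \dots, x_n \mid \exists s_1, \dots, s_r}$. For $\cup_i X^{(i)} \subseteq R$, induction reduces matters to $G_{i+1} \subseteq R$, using that $R$ is a partial submonoid of $N$ containing $G_{i+1}$, so the smallest partial submonoid of $N$ containing $G_{i+1}$, namely $(G_{i+1})_{ass,N}$, lands inside $R$. Here the subsum property enters: for $a, b \in X^{(i)} \subseteq R$ the monomial expansion of $ab$ can be calculated in $R$ by repeated use of bilinearity, so every subsum of $ab$ lies in $R$ and the generating summable pairs of $G_{i+1}$ are summable in $R$; hence $G_{i+1} \subseteq R$. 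For the reverse inclusion, $\cup_i X^{(i)}$ is a partial submonoid (an increasing union of such), it contains all subsums of the $s_j$ and the unit, it is closed under multiplication by clause (2), and the same subsum argument shows it is closed under the distributive summability that makes multiplication bilinear; thus it is a partial subring of $N$ containing every subsum of the $s_j$ in which the prescribed pairs are summable, so by minimality $R \subseteq \cup_i X^{(i)}$.

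The step I expect to be the main obstacle is the factorization property at the inductive stage, and the subsum property is what resolves it. The naive choice of adjoining only the products $ab$ with their distributively forced sums fails to have the factorization property: computing the monomial expansion of $ab$ forces ``mixed'' intermediate sums of the shape $(\mu_1 + \dots + \mu_{j-1})b + \mu_j(\nu_1 + \dots + \nu_\ell)$ that are neither in $X^{(i)}$ nor products, so no admissible bracketing stays inside such a base. Including \emph{all} subsums of each $ab$ repairs this at once, and the subsum property guarantees the enlargement is harmless, i.e.\ still contained in $R$. I would then only need to double-check that $G_{i+1}$ is genuinely a partial submagma — its declared sums land among subsums, hence in its underlying set — and that the chosen left-to-right calculation of a subsum keeps every partial sum within $G_{i+1}$; both are routine.
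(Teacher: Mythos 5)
Your construction does prove the lemma, but it differs from the paper's at the inductive step, and there are two points you should be aware of. First, the paper's construction is exactly the ``naive choice'' you dismiss: it takes $Y^{(k)} = X^{(k-1)} \cup \{ab \mid a,b \in X^{(k-1)}\}$ with only the distributive pairs $(a_1b, a_2b)$, for $(a_1,a_2) \in (X^{(k-1)})_2$ and $b \in X^{(k-1)}$, declared summable --- and this does \emph{not} fail the factorization property. Since ``factorized'' permits an arbitrary reordering and supplement of parentheses, one never has to pass through your mixed sums $(\mu_1+\dots+\mu_{j-1})b + \mu_j(\nu_1+\dots+\nu_\ell)$: bracket $ab = m_1b + \dots + m_rb$ by the bracketing of $a$ (every intermediate value is then (partial sum of $a$)$\,\cdot\, b$, a product of two elements of $X^{(k-1)}$, and every pair is a distributive pair), and compute each leaf $m_ib = m_im'_1 + \dots + m_im'_s$ by the bracketing of $b$ scaled by $m_i$. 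This blockwise bracketing stays inside $Y^{(k)}$, and it is precisely the paper's proof. What the minimal construction buys is that clause (3) becomes immediate: the distributive pairs are exactly what is needed to see that $\cup_i X^{(i)}$ is a partial subring and that $Y^{(k)} \subseteq R$ follows from bilinearity of multiplication in $R$.

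Your larger stages $G_{i+1}$ (all subsums of products, with common-product pairs) do make the factorization property a one-line left-to-right check, but they shift the real weight onto clause (3), and there your key lemma is stated too weakly for the use you make of it. The subsum property --- calculability of $u_1+\dots+u_m$ implies calculability of every $\sum_{i\in I}u_i$ --- gives \emph{membership} of the subsums of $ab$ in $R$, but not summability in $R$ of the generating pairs of $G_{i+1}$: knowing that $u$, $v$ and $u+v$ are each calculable in $R$ does not put $(u,v)$ into $R_2$, because the bracketing witnessing calculability of $u+v$ need not split into a $u$-block and a $v$-block. What you actually need is the partition form: if $u_1+\dots+u_m$ is calculable in a partial monoid $A$ and $I, J$ are disjoint index sets, then $\bigl(\sum_{i\in I}u_i, \sum_{j\in J}u_j\bigr) \in A_2$. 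This is true and provable by the same induction on axiom (c) that you sketch (your three-term case already produces pair statements, not just memberships), so the gap is reparable; but as written, the step ``the generating summable pairs of $G_{i+1}$ are summable in $R$'' does not follow from the lemma you stated, and it is the one place where your route requires an argument the paper's route avoids entirely.
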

	\begin{proof}
		If we put
		\begin{align*}
			Y^{(0)} &= \Set{ 0, 1 } \cup \Set{ s | s \mbox{~is a subsum of some~}s_i \in S},\\ 
			(Y^{(0)})_2 &= \Set{ (a,b) | a+b \mbox{~is a subsum of some~}s_i \in S} 
		\end{align*}
		then this is a partial submagma of $N,$ which has the factorization property.
		Let $X^{(0)} = (Y^{(0)})_{ass, N}$ be its associative closure in $N.$ By the previous lemma,
		$X^{(0)}$ has the factorization property.
		Suppose that we have constructed $X^{(0)}, \dots, X^{(k-1)}$ such that $X^{(i)}$ has the factorization
		property for $i=0,\dots, k-1$ and  if $a,b \in X^{(i)}$ then $ab \in X^{(i+1)}$ for all $i = 0, \dots, k-2.$
		The we put
		\begin{align*}
			Y^{(k)} &= X^{(k-1)}\cup \Set{ ab | a, b \in X^{(k-1)} },\\ 
			(Y^{(k)})_2 &= (X^{(k-1)})_2 \cup \Set{ (a_1 b, a_2 b) | (a_1, a_2) \in (X^{(k-1)})_2 , b\in  X^{(k-1)} }.
		\end{align*}
		$Y^{(k)}$ is a partial submagma of $N.$ To show that $Y^{(k)}$ has the factorization property,
		let $a= m_1 + \dots + m_r$ and $b = m'_1 + \dots + m'_s$ be the unique factorization of $a$ and $b$
		in $N.$ By assumption, there exists a way to supply parentheses to these formula so that they can be
		calculated in $X^{(k-1)}.$ Then the unique factorization of $ab$ is given by
		$ab = \sum m_i m'_j.$ Now, 
		supply parentheses to $ab =  m_1 b + \dots + m_r b$ in the way that we supplied parentheses
		to the formula $a = m_1 + \dots + m_r.$ This formula can be calculated in 
		$Y^{(k)},$ once $b$ is calculated. So we supply parentheses to each 
		$b = m'_1 + \dots + m'_s$ so that it is calculated in $Y^{(k)}.$ Thus $Y^{(k)}$ has the factorization property.
		Let $X^{(k)} = (Y^{(k)})_{ass, N}$ be its associative closure in $N.$ By the previous lemma,
		$X^{(k)}$ has the factorization property.
		
		We have constructed an increasing sequence $X^{(0)}\subseteq X^{(1)}\subseteq  \dots$ 
		of partial submonoids of $N$ which satisfies (1) and (2). Now it is clear that it satisfies (3).	
	\end{proof}

	\begin{proof}[Proof of Proposition \ref{prop:pring_hom_by_generators}]
		Let $Y^{(k)}$ and $X^{(k)}$ be as in (the proof of) 
		Lemma \ref{lem:inc_seq_of_submonoids_unique_factorization}.
		The assumption of the proposition that $s_i(c_1, \dots, c_n)$ can be calculated in $A$ for all $i$
		is equivalent to the condition that a partial magma homomorphism
		$\varphi\colon Y^{(0)} \to A$ can be defined by $\varphi(x_i) = c_i.$ Suppose we have shown that
		$\varphi$ extends to a partial magma homomorphism $\varphi \colon Y^{(k-1)} \to A.$
		If $(a + b) + c$ can be calculated in $Y^{(k-1)},$ then $(\varphi(b) , \varphi(c)) \in A_2,$
		 since $A$ is a
		partial monoid. If we put $\varphi(b+c) = \varphi(b)+\varphi(c),$
		then this is well-defined since $Y^{(k-1)}$ has the factorization property.
		In this way, we can extend $\varphi$ uniquely to 
		a partial monoid homomorphism $\varphi \colon X^{(k-1)}\to A.$ 
		Let $a, b \in X^{(k-1)}.$ We put $\varphi(ab) = \varphi(a)\varphi(b).$ This is well-defined by
		the factorization property of $Y^{(k)}.$ So $\varphi$ uniquely extends to
		a partial magma homomorphism $\varphi\colon Y^{(k)} \to A.$
		Continuing this process inductively, $\varphi$ extends uniquely to a partial monoid
		$\varphi \colon \cup_{k} X^{(k)} \to A,$ which is clearly a partial ring homomorphism
		$\varphi \colon \fun \angles{x_1,\dots, x_n | \exists s_1, \dots, s_r }\to A.$
	\end{proof}
	\subsection{Congruences}

	\begin{defn}[Congruence]
		Let $A$ be a partial ring. By a {\bf congruence} on $A,$ we mean an effective equivalence
		relation on $A.$ 
	\end{defn}

	\begin{prop}
		Let $A$ be a partial ring.
		A partial subring $R \subseteq A\times A$ is a congruence on $A$
		if and only if its underlying partial monoid is an effective equivalence relation
		on the underlying partial monoid of $A.$
	\end{prop}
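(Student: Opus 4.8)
The plan is to exploit the Remark that a partial ring is a commutative monoid object in $(\pmon, \otimes, \fun),$ so that the forgetful functor $U\colon \pring \to \pmon$ creates all small limits; in particular $U$ preserves and reflects kernel pairs. The second observation I would rely on is that a partial subring of $A\times A$ is completely determined by its underlying partial submonoid together with the multiplication inherited by restriction from $A\times A.$ Together these let me transport the kernel-pair property between the two categories, the only real work being the construction of the quotient partial ring in the backward direction.

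For the forward direction, suppose $R$ is a congruence, i.e.\ $R\rightrightarrows A$ is the kernel pair of some partial ring homomorphism $f\colon A\to B.$ Since $U$ preserves limits, $U(R)\rightrightarrows U(A)$ is the kernel pair of $U(f)\colon U(A)\to U(B)$ in $\pmon,$ so the underlying partial monoid of $R$ is an effective equivalence relation.

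For the backward direction, assume the underlying partial monoid $U(R)$ is an effective equivalence relation on $U(A).$ By Proposition \ref{prop:effective-additive-pmag} it is additive, so $R_2=(R\times R)\cap (A\times A)_2;$ using this one checks directly that the diagonal and the swap of $A\times A$ corestrict to partial ring homomorphisms, which together with transitivity witness that $R$ is an equivalence relation in $\pring.$ It remains to produce a quotient. The key point is that, being a partial subring of $A\times A,$ the set $R$ is closed under componentwise multiplication, so $aRa'$ and $bRb'$ force $ab\,R\,a'b'.$ Hence $([a],[b])\mapsto [ab]$ is a well-defined bilinear map $(A\dslash R)\times(A\dslash R)\to A/R$ into the partial monoid quotient $A/R=(A\dslash R)_{ass},$ which by Proposition \ref{prop:bilinear_pmag_pmon} extends uniquely to a bilinear multiplication on $A/R\times A/R.$ Commutativity, associativity and the unit descend by the uniqueness in that extension, so $A/R$ is a partial ring, $\pi\colon A\to A/R$ is a partial ring homomorphism, and its underlying homomorphism is the partial monoid quotient $U(A)\to U(A)/U(R).$

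Finally I would identify $R$ with the kernel pair of $\pi$ in $\pring.$ Since $U$ creates limits, the underlying partial monoid of that kernel pair is the kernel pair of $U(\pi)$ in $\pmon,$ which by effectiveness of $U(R)$ is $U(R)$ itself; as both $R$ and this kernel pair are partial subrings of $A\times A$ sharing the same underlying partial monoid and the multiplication restricted from $A\times A,$ they coincide, so $R$ is a congruence. The main obstacle is the middle step: verifying that the multiplication genuinely descends through the additive quotient $A\dslash R$ and then extends across the associative closure to make $A/R$ a partial ring with $U(A/R)=U(A)/U(R)$; everything else is a formal consequence of $U$ creating limits.
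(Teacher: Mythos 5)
Your proposal is correct and follows essentially the same route as the paper: the forward direction uses that limits (hence kernel pairs) in $\pring$ are computed in $\pmon,$ and the backward direction descends the multiplication to $A\dslash R$ using that $R$ is a partial subring of $A\times A,$ extends it to $A/R$ via Proposition \ref{prop:bilinear_pmag_pmon}, and then identifies $R$ with the kernel pair of $\pi\colon A\to A/R$ in $\pring$ by reducing to the kernel pair in $\pmon.$ The only cosmetic difference is your ``creates limits'' phrasing and the closing identification of the two subrings, where the paper instead verifies the universal property of the pullback directly; the substance is identical.
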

	\begin{proof}
		Assume that $R$ is a congruence on $A.$
		Then $R\rightrightarrows A$ is a kernel pair of some morphism $f\colon A\to B$ in
		$\pring.$ Since a limit of any diagram 
		in $\pring$ is constructed by taking a limit of the corresponding
		diagram in $\pmon,$ $R\rightrightarrows A$ is the kernel pair of the 
		underlying homomorphism $f\colon A\to B$ in $\pmon.$ So the underlying
		partial monoid of $R$ is an effective relation on the underlying partial monoid of $A.$
		
		Conversely, assume that the underlying
		partial monoid of $R$ is an effective relation on the underlying partial monoid of $A.$
		We can define a map $A\dslash R \times A\dslash R \to A\dslash R$ by
		$([a_1], [a_2]) \mapsto [a_1 a_2],$ since $R$ is a partial subring of $A\times A.$
		Then $m'$ is bilinear since the multiplication map of $A$ is bilinear.
		Then by Proposition \ref{prop:bilinear_pmag_pmon}, $m'$ induces 
		a bilinear map $\bar{m}\colon A/R\times A/R \to A/R.$
		This makes $A/R$ into a partial ring.
		It is clear that the canonical morphism $\pi\colon A\to A/R$ is 
		a homomorphism of partial rings.
		To show that $R$ is the kernel pair of $\pi\colon A\to A/R,$
		let $f_1, f_2 \colon B\to A$ be two morphisms such that
		$\pi\circ f_1 = \pi\circ f_2.$ 
		The underlying partial monoid of $R$ is the kernel pair of $\pi$ in $\pmon,$
		there exists a unique morphism $f\colon B\to A$ of underlying partial monoids
		such that $f = (f_1, f_2).$ 
		Since $R$ is a partial subring of $A\times A,$ $f$ is a homomorphism of partial rings.
	\end{proof}
	\begin{cor}
		If $\Set{R_\lambda}$ is a family of congruences on a partial ring $A,$
		then $\cap_{\lambda} R_{\lambda}$ is a congruence on $A.$
	\end{cor}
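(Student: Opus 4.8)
The plan is to reduce the statement to the analogous fact about effective equivalence relations on partial monoids, which has already been established, via the characterization of congruences proved just above. First I would form the intersection $R = \bigcap_\lambda R_\lambda$ exactly as in the discussion of intersections of partial submagmas, so that the underlying set of $R$ is $\bigcap_\lambda R_\lambda$ and its summability set is $R_2 = \bigcap_\lambda (R_\lambda)_2.$ Since an arbitrary intersection of partial submonoids is again a partial submonoid, and since $R$ contains $(1,1)$ and is closed under the multiplication inherited from $A\times A$ (each factor being closed, so is the intersection), $R$ is a partial subring of $A\times A.$ It is moreover an equivalence relation on $A,$ being an intersection of equivalence relations.

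Next I would invoke the preceding Proposition characterizing congruences: because each $R_\lambda$ is a congruence on $A,$ the underlying partial monoid of each $R_\lambda$ is an effective equivalence relation on the underlying partial monoid of $A.$ The key observation, immediate from the construction of intersections, is that the underlying partial monoid of $R$ coincides with the intersection of the underlying partial monoids of the $R_\lambda$ taken in $\pmon,$ since both the underlying set and the summability relation of $R$ are formed by intersecting those of the $R_\lambda$ componentwise. I would then apply the corresponding Corollary for partial monoids, which says that an intersection of effective equivalence relations on a partial monoid is again an effective equivalence relation; this yields that the underlying partial monoid of $R$ is an effective equivalence relation on the underlying partial monoid of $A.$ Appealing once more to the same Proposition, now in its converse direction, I conclude that $R$ is a congruence on $A.$

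The argument is essentially formal, so there is no substantial obstacle; the only point deserving care is the compatibility of passing to the underlying partial monoid with the formation of intersections. This compatibility holds because the intersection is computed componentwise on the underlying set and on the summability relation while the multiplication is merely inherited, so forgetting the multiplication commutes with intersecting. Once this is noted, the two applications of the congruence characterization bracket a single application of the monoid-level Corollary, and the proof is complete.
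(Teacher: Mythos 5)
Your proof is correct and follows exactly the route the paper intends: the corollary is stated without proof precisely because it follows immediately from the preceding Proposition (congruence $\iff$ underlying partial monoid is an effective equivalence relation) together with the earlier Corollary that intersections of effective equivalence relations on a partial monoid are effective. Your additional care about the partial subring structure of the intersection and the compatibility of intersections with the forgetful passage to $\pmon$ fills in the details the paper leaves implicit.
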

	\begin{cor}
		Let $A$ be a partial ring.
		For any subset $S$ of $A\times A,$ there exists the smallest congruence on $A$
		which contains $S.$ (It is denoted by $\angles{S}.$)
	\end{cor}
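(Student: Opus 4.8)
The plan is to realize $\angles{S}$ as the intersection of all congruences on $A$ that contain $S,$ and then to invoke the preceding corollary on closure of congruences under intersection. The only point needing care is that this family is nonempty, which I would settle by exhibiting a single congruence containing $S.$

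First I would note that $\pring$ has a terminal object, namely the zero partial ring $T=\Set{0}$ in which $0=1$; for every partial ring $A$ there is a unique homomorphism $t\colon A\to T,$ the constant map onto $0.$ Its kernel pair is the pullback of $t$ along itself, and since $T$ is terminal this pullback is simply the product $A\times A.$ Hence the total relation $A\times A$ is a kernel pair, so it is an effective equivalence relation, i.e.\ a congruence on $A,$ and trivially $S\subseteq A\times A.$ Consequently the family
\[
    \mathcal{F}=\Set{ R | R \text{ is a congruence on } A,\ S\subseteq R }
\]
is nonempty.

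Next I would put $\angles{S}=\cap_{R\in\mathcal{F}}R.$ By the preceding corollary an intersection of congruences is again a congruence, so $\angles{S}$ is a congruence on $A.$ As each $R\in\mathcal{F}$ contains $S,$ so does the intersection, whence $\angles{S}\in\mathcal{F}.$ Finally, if $R'$ is any congruence with $S\subseteq R',$ then $R'\in\mathcal{F},$ and therefore $\angles{S}\subseteq R'$ by the definition of the intersection. Hence $\angles{S}$ is the smallest congruence containing $S.$

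The main obstacle is precisely the nonemptiness of $\mathcal{F}$; everything else is the formal ``intersection of all objects with a given property'' argument, which works as soon as closure under arbitrary intersection is available from the preceding corollary. Identifying $A\times A$ as a congruence through the terminal object is the cleanest route; alternatively one could verify directly that the total relation is additive and is a partial subring of $A\times A,$ and then appeal to the characterization of congruences as effective equivalence relations on the underlying partial monoid, but the kernel-pair description seems least laborious.
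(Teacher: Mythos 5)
Your proof is correct and takes essentially the approach the paper intends: the corollary appears there without proof as an immediate consequence of the preceding corollary, namely intersecting the family of all congruences on $A$ containing $S.$ Your additional verification that this family is nonempty, by exhibiting $A\times A$ as the kernel pair of the unique homomorphism to the terminal partial ring $\Set{0}$ (hence an effective equivalence relation, hence a congruence), is a detail the paper leaves implicit, and it is sound.
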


	\begin{prop}
		Let $R$ be a congruence on a partial ring $A.$ Then 
		for any homomorphism $f\colon A\to B$
		for which $f(a_1) = f(a_2)$ for all $(a_1, a_2) \in R,$ there exists
		a unique homomorphism $\tilde{f} \colon A/R \to B$ which makes the following diagram commute:
		\begin{center}
			\begin{tikzcd}
				A \ar{r}{f}\ar{d}[left]{\pi} & B\\
				A/R \ar{ru}[below]{\tilde{f}}.
			\end{tikzcd}
		\end{center}
	\end{prop}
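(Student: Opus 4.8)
The plan is to obtain $\tilde f$ first as a homomorphism of partial monoids, using the coequalizer presentation of $A/R$, and then to upgrade it to a homomorphism of partial rings by checking compatibility with the unit and the multiplication.

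First I would reformulate the hypothesis. Writing $p_1,p_2\colon R\rightrightarrows A$ for the two projections (the restrictions of the projections $A\times A\to A$), the condition that $f(a_1)=f(a_2)$ for all $(a_1,a_2)\in R$ says precisely that $f\circ p_1=f\circ p_2$ as homomorphisms of partial monoids into the partial monoid underlying $B$. Since $R\rightrightarrows A\to A/R$ was observed to be a coequalizer diagram in $\pmon$, its universal property produces a unique homomorphism of partial monoids $\tilde f\colon A/R\to B$ with $\tilde f\circ\pi=f$. Uniqueness at the level of partial monoids already yields the uniqueness clause of the proposition, since any homomorphism of partial rings satisfying $\tilde f\circ\pi=f$ is in particular such a homomorphism of partial monoids.

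It then remains to verify that this $\tilde f$ is a homomorphism of partial rings, i.e.\ that $\tilde f(1)=1$ and $\tilde f(xy)=\tilde f(x)\tilde f(y)$ for all $x,y\in A/R$. The unit is immediate: since $\pi$ and $f$ are homomorphisms of partial rings, $\tilde f(1_{A/R})=\tilde f(\pi(1_A))=f(1_A)=1_B$. Multiplicativity is the main obstacle, and I would handle it by a two-step reduction to the generating set $\pi(A)\subseteq A/R$. The key tool is that $A/R=(A\dslash R)_{ass}$ is the associative closure of $A\dslash R,$ so by the universal property of the associative closure two homomorphisms of partial monoids out of $A/R$ coincide as soon as they agree after precomposition with $\alpha\colon A\dslash R\to A/R,$ that is, as soon as they agree on every $\pi(a)$ with $a\in A.$

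Concretely, fix $x\in A/R$ and consider the two maps $y\mapsto\tilde f(xy)$ and $y\mapsto\tilde f(x)\tilde f(y).$ Both are homomorphisms of partial monoids $A/R\to B$: additivity and preservation of summability follow from the bilinearity of the multiplications on $A/R$ and on $B$ together with the additivity of $\tilde f,$ and both send $0$ to $0.$ Hence, by the reduction above, it suffices to check $\tilde f(x\,\pi(a))=\tilde f(x)f(a)$ for every $a\in A.$ Now fix $a\in A$; the two maps $x\mapsto\tilde f(x\,\pi(a))$ and $x\mapsto\tilde f(x)f(a)$ are again homomorphisms of partial monoids $A/R\to B,$ so it suffices to check their equality on the generators $x=\pi(a').$ There the computation is direct: since $\pi,$ $f$ and $\tilde f$ are compatible, $\tilde f(\pi(a')\pi(a))=\tilde f(\pi(a'a))=f(a'a)=f(a')f(a),$ while $\tilde f(\pi(a'))f(a)=f(a')f(a),$ and the two agree. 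Unwinding the two reductions proves multiplicativity and completes the proof. The only point requiring care is that each intermediate map really is a homomorphism of partial monoids, so that the associative-closure reduction applies; this is exactly where bilinearity of the multiplication is used.
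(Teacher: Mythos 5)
Your proof is correct and takes essentially the same route as the paper: existence and uniqueness of $\tilde f$ as a partial monoid homomorphism come from the coequalizer $R \rightrightarrows A \to A/R$ in $\pmon$, and multiplicativity is then checked by reducing to elements of $\pi(A)$, using that $A/R = (A\dslash R)_{ass}$ is the associative closure of $A\dslash R$. The only cosmetic difference is that the paper performs this last reduction in one stroke, applying the uniqueness clause of Proposition \ref{prop:bilinear_pmag_pmon} to the two bilinear maps $\tilde f \circ \bar m$ and $m_B\circ(\tilde f\times\tilde f)$ in a commuting-rectangles diagram, whereas you obtain the same conclusion by currying, i.e.\ two one-variable applications of the universal property of the associative closure.
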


	\begin{proof}
		Let $f\colon A\to B$ be a morphism of partial rings such that
		$f(a_1) = f(a_2)$ for all $(a_1, a_2) \in R.$ Since $A\to A\dslash R$ is a coequalizer of
		$R\rightrightarrows A$ in $\pmag$ we have a homomorphism 
		$A\dslash R \to B$ of partial magmas. Similarly, we have a homomorphism $A/R \to B$
		of partial monoids, and we have a commutative diagram
		\begin{center}
			\begin{tikzcd}
				A\ar{r}\ar{rd}[below]{f} & A\dslash R \ar{r}\ar{d} & A/R\ar{ld}{\tilde{f}}\\
				& B.
			\end{tikzcd}
		\end{center}
		In the following diagram, the largest rectangle is commutative, since
		$f$ is a homomorphism of partial rings and by the definition of $m' :$
		\begin{center}
			\begin{tikzcd}
				 A\dslash R \times A\dslash R\ar{r}\ar{d}{m'} 
				& A/R \times A/R \ar{d}{\bar{m}} \ar{r} & B\times B \ar{d}{m_B}\\		
				A\dslash R\ar{r}  &  A/R \ar{r}& B.
			\end{tikzcd}
		\end{center}
		Also, the left small rectangle is commutative by the definition of $\bar{m}.$
		Then so is the right small rectangle, by Proposition \ref{prop:bilinear_pmag_pmon}.
	\end{proof}

\subsection{Ideals}

\begin{defn}[ideal]
	An {\bf ideal} of a partial ring $A$ is a partial submonoid $I$ of $A$
	 such that $I_2 = A_2 \cap (I\times I)$
	and $ax \in I $ for any $a\in A$ and $x\in I.$
\end{defn}

\begin{example}
	Let $T$ be a subset of $A.$ If we put
	\begin{align*}
		I &= \Set{ a_1 t_1 + \dots + a_r t_r | \begin{array}{l}r\in \NN, a_i \in A, t_i \in T  \\
		(a_1 t_1 , \dots , a_r t_r)\in A_r
		\end{array}
		},
	\end{align*}
	then $I$ is the smallest ideal which contains $T.$ This ideal $I$ is denoted by $(S).$
	If $ S = \Set{a}$ is a singleton, $(S)$ is also denoted by $(a).$
\end{example}

Let $\ia$ and $\ib$ be two ideals of $A.$ The smallest ideal which contains $\ia$ and $\ib$ is denoted by
$\ia + \ib.$ On the other hand, if we put 
	\begin{align*}
		I &= \Set{ a_1 b_1 + \dots + a_r b_r | \begin{array}{l}r\in \NN, a_i \in A, b_i \in B  \\
		(a_1 b_1 , \dots , a_r b_r)\in A_r
		\end{array}
		},
	\end{align*}
then $I$ is an ideal which is contained in both $\ia$ and $\ib.$ This ideal $I$ is denoted by $\ia \ib.$

Let $\varphi\colon A\to B$ be a homomorphism of partial rings and $J\subseteq B$ be an ideal. 
If we put
\[ I = \varphi^{-1}(J) \]
then $I$ is an ideal of $A.$ This ideal $I$ is denoted by $\varphi^*(J).$ On the other hand, let
$I$ be an ideal of $A.$ The smallest ideal of $B$ which contains $\varphi(I)$ is denoted by $\varphi_*(I).$

\begin{defn}[prime ideal]
	An ideal $I$ of a partial ring $A$ is called a {\bf prime ideal} if $I\neq A$ and
	$ab \in I$ implies $a\in I$ or $b\in I$ for any $a,b \in A.$
\end{defn}

\subsection{Localization}

Let $S$ be a multiplicative subset of $A.$ We put
\begin{align*}
	S^{-1} A &= \Set{ a/s | a\in A, s\in S} \\
	(S^{-1} A)_2 &= \Set{ (a/s, b/t) | \mbox{there exists~} u \in S \mbox{~s.t.~}(uta, usb) \in A_2},
\end{align*}
where $a/s$ denotes the usual equivalence class such that $a/s = b/t$ if and only if
there exists $u\in S$ such that $uta = usb.$ 
It is readily checked that $S^{-1}A$ is a partial ring by putting $\frac{a}{s}\frac{b}{t} = \frac{ab}{st}.$ 
The homomorphism $\lambda \colon A\to S^{-1}A$ given by $\lambda(a) = a/1$ has the universal property
of localization.

\begin{defn}[local pring]
	A partial ring $A$ is called a {\bf local partial-ring} (local pring for short)
	 if it has a unique maximal ideal.
\end{defn}

If $\ip$ is a prime ideal of a partial ring $A,$ then the localization $A_\ip$ of $A$
by the multiplicative set $A\setminus \ip$ is a local pring.

If $A , B$ are local prings, a homomorphism $\varphi\colon A\to B$ is called a
{\bf homomorphism of local prings} if $\varphi^*(\im_B) = \im_A,$ where
$\im_A$ (resp. $\im_B$) is the maximal ideals of $A$ (resp. $B).$

\begin{prop}
Let $A$ be a partial ring and $\lambda\colon A\to S^{-1}A$ be the localization 
by a multiplicative subset $S$ of $A.$ 
For any ideal $I\subseteq A, \lambda^*\lambda_*(I) = I.$
\end{prop}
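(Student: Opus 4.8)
The plan is to establish the two inclusions $I \subseteq \lambda^*\lambda_*(I)$ and $\lambda^*\lambda_*(I) \subseteq I$ separately. The first is formal and uses none of the partial structure: by definition $\lambda(I) \subseteq \lambda_*(I)$, so applying $\lambda^{-1}$ gives $I \subseteq \lambda^{-1}(\lambda(I)) \subseteq \lambda^{-1}(\lambda_*(I)) = \lambda^*\lambda_*(I)$.

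For the reverse inclusion I would first pin down $\lambda_*(I)$ explicitly, proving that
\[
	\lambda_*(I) = \Set{ x/s | x \in I,\ s\in S }.
\]
The right-hand side plainly contains $\lambda(I)$ (take $s=1$) and is contained in every ideal containing $\lambda(I)$, since such an ideal must contain $(1/s)\cdot(x/1) = x/s$; so the real work is to check that the right-hand side is itself an ideal of $S^{-1}A$. Closure under multiplication by $a/t \in S^{-1}A$ is immediate from $(a/t)(x/s) = ax/(ts)$ with $ax \in I$ and $ts \in S$. Closure under the partial addition is where the partial-monoid bookkeeping enters: for a summable pair the sum is $(tx+sy)/(st)$, and I must verify that $tx+sy$ can be calculated in $I$, combining that $I$ is an ideal with the summability clause built into the definition of $(S^{-1}A)_2$. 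I expect this verification to be the most delicate routine part.

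Granting the description, take $a \in \lambda^*\lambda_*(I)$, so that $a/1 = x/s$ for some $x \in I$ and $s\in S$. Unwinding the equivalence defining $S^{-1}A$ yields a $u\in S$ with $usa = ux$ in $A$; as $x \in I$ and $I$ is an ideal, $ux\in I$, whence $va\in I$ with $v = us \in S$. The crux is then to pass from $va \in I$ for some $v\in S$ to $a \in I$, and I expect this to be the main obstacle: classically $\lambda^*\lambda_*(I)$ recovers only the \emph{saturation} of $I$, which is in general strictly larger than $I$, so the entire content of the statement is the claim that in the present setting this passage introduces no new elements. I would attack this final cancellation by analysing how a witnessing expression of $va$ as an element of $I$ constrains $a$, exploiting whatever injectivity multiplication by $v$ retains in the partial-ring structure, and I would regard the success of this step as the decisive point of the proof.
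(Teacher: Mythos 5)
Your two inclusions and the explicit description $\lambda_*(I) = \{\,x/s \mid x \in I,\ s \in S\,\}$ are correct, and the summability check you flag does go through: if $(x/s,\,y/t)$ is summable in $S^{-1}A$, there is $u \in S$ with $(utx, usy) \in A_2$; both entries lie in $I$, hence $(utx,usy) \in A_2 \cap (I\times I) = I_2$, so the sum $(utx+usy)/(ust)$ again has numerator in $I$. Your reduction of the whole proposition to the implication ``$va \in I$ for some $v \in S$ implies $a \in I$'' is also exactly right. But the step you singled out as decisive is not merely delicate --- it is impossible, and the proposition as stated is false. There is nothing in the paper to compare against (the proposition is stated without proof), and no proof can exist: commutative rings are partial rings in this framework (the paper itself notes they are even \emph{good} partial rings), so the classical saturation counterexample applies verbatim. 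Take $A = \ZZ$, $S = \{1,2,4,8,\dots\}$, $I = (2)$. Then $\lambda_*(I)$ contains $\frac{1}{2}\cdot\frac{2}{1} = 1$, hence $\lambda_*(I) = S^{-1}A$ and $\lambda^*\lambda_*(I) = \ZZ \neq I$. Indeed your own computation identifies $\lambda^*\lambda_*(I)$ as precisely the saturation $\{\,a \in A \mid \exists v \in S,\ va \in I\,\}$, and nothing in the partial-ring axioms makes multiplication by $v$ cancellable in the way the final step would require.

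What survives are two corrected statements, both provable with the computations you already have. First, the reverse composition: $\lambda_*\lambda^*(J) = J$ for every ideal $J$ of $S^{-1}A$; one inclusion is formal since $\lambda(\lambda^{-1}(J)) \subseteq J$, and for the other, $x/s \in J$ gives $x/1 = (s/1)(x/s) \in J$, so $x \in \lambda^*(J)$ and then $x/s = (1/s)(x/1) \in \lambda_*\lambda^*(J)$. Second, the stated equality restricted to $S$-saturated ideals, in particular to prime ideals $\ip$ disjoint from $S$: there $va \in \ip$ with $v \in S \subseteq A \setminus \ip$ forces $a \in \ip$ by primality. The second version is what the surrounding Spec machinery actually needs (the correspondence between primes of $S^{-1}A$ and primes of $A$ disjoint from $S$, and the identification of stalks with $A_\ip$), and it is almost certainly what was intended.
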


\begin{defn}[partial field]
	A {\bf partial field} is a partial ring in which every non-zero element is multiplicatively invertible.
\end{defn}

A partial ring is a partial field iff the ideal $(0)$ is a maximal ideal.

\subsection{Blueprints}

We can compare partial rings to Lorscheid's blueprints\cite{lorscheid-the-geometry-of-1}. 
As is explained in the above paper, Deitmar's sesquiads\cite{deitmar-congruence-schemes}
are special kinds of blueprints. 
In this section, we show that 
partial rings are also special kinds of blueprints as explained below.
 
Let $A$ be a partial ring and let $\NN[A]$ denote the monoid-semiring determined by
the underlying multiplicative monoid of $A.$
We put
\[
	R_0(A) = \Set{ (a_1\dotplus a_2, a ) | (a_1, a_2) \in A, a_1+a_2 = a}\cup \Set{(0,\emptyset)}.
\]
Let $R(A)$ be the smallest additive equivalence relation on $\NN[A].$ In other words,
$R(A)$ is the smallest equivalence relation on $\NN[A]$ which contains
\[
	R_1(A) = \Set{(0 \dotplus x, x) , (a_1 \dotplus a_2 \dotplus x, (a_1+a_2) \dotplus x ) | (a_1, a_2 ) \in A_2, x\in \NN[A]}.
\]

\begin{lem}\label{lem:pring_to_blueprint_to_pring}
	If $(a_1\dotplus a_2, a ) \in R(A),$ then $(a_1,a_2) \in A_2$ and $a_1+a_2 = a.$
\end{lem}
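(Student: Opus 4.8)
The plan is to control the equivalence relation $R(A)$ by an invariant that detects genuine summability. Write $z \sim z'$ for $(z,z') \in R(A)$. Since $R(A)$ is the equivalence relation generated by $R_1(A)$, we have $z \sim z'$ exactly when $z$ and $z'$ are joined by a finite zig-zag of elementary moves, each move being either the insertion or deletion of a summand $0$, or the replacement of a summable pair $a_1, a_2$ (with $(a_1,a_2) \in A_2$) by the single summand $a_1 + a_2$, or the reverse of such a replacement. For $a \in A$ I would say that a formal sum $c_1 \dotplus \dots \dotplus c_k \in \NN[A]$ \emph{evaluates to $a$} if, after some reordering and some insertion of parentheses, it can be calculated in $A$ with result $a$; for $k = 1$ this means $c_1 = a$, and the empty sum evaluates only to $0$. (Equivalently, the hypothesis $(a_1 \dotplus a_2, a) \in R(A)$ is just the identity $\mu(a_1) + \mu(a_2) = \mu(a)$ in $A_{mon}$, and the lemma asserts that $A_{mon}$ reflects the partial addition of $A$.)

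First I would reduce everything to one invariance claim: \emph{for each fixed $a \in A$, every elementary move preserves the property of evaluating to $a$}, in both directions. Granting this, the lemma is immediate. The one-term sum $a$ evaluates to $a$; transporting this property along a zig-zag joining $a$ to $a_1 \dotplus a_2$ shows that $a_1 \dotplus a_2$ evaluates to $a$. But a two-term sum can be calculated in $A$ only as $a_1 + a_2$ or $a_2 + a_1$, which by commutativity (axiom (b)) forces $(a_1, a_2) \in A_2$ and $a_1 + a_2 = a$, as required.

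It then remains to prove the invariance claim. For the zero-move it is immediate from the unit axiom (a) that inserting or deleting a summand $0$ affects neither calculability nor value. The combine-move is the substance. Its easy half is \emph{splitting}: if $(a_1 + a_2) \dotplus w$ evaluates to $a$, then locating the leaf $a_1 + a_2$ in a successful bracketing and expanding it by a single application of strict associativity (axiom (c)) shows that $a_1 \dotplus a_2 \dotplus w$ evaluates to $a$ as well. The hard half, which I would isolate as a \emph{grouping lemma}, is the converse: if a formal sum containing summands $a_1, a_2$ with $(a_1, a_2) \in A_2$ evaluates to $a$, then merging them into $a_1 + a_2$ preserves this.

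The grouping lemma is where I expect the real work. In a successful bracketing the leaves $a_1$ and $a_2$ may lie in entirely different branches and must be brought together without destroying calculability; note that the underlying rewriting system is \emph{not} confluent (two summable pairs that share a summand need not reconverge), so there is no appeal to unique normal forms. I would prove it by induction on the number of summands, using a \emph{pull-out sub-lemma}: if a calculable sum contains a summand $a_1$, then the remaining summands are calculable to some $c$ with $(a_1, c) \in A_2$ and $a_1 + c$ equal to the total value, this itself proved by induction on the bracketing from axiom (c) and commutativity. With pull-out available, the only delicate case of the grouping lemma is when $a_1$ and $a_2$ sit on opposite sides of the top bracket; pulling each out reduces that case to rearranging the four elements $a_1, a_2$ and the two complementary values, which two applications of axiom (c) settle, while the cases where $a_1, a_2$ lie on the same side follow from the inductive hypothesis applied to a proper sub-sum. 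Assembling these pieces yields the invariance claim and hence the lemma.
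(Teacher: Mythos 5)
Your proof is correct and takes essentially the same route as the paper: the paper's own proof likewise fixes the property ``the formal sum can be calculated in $A$ (with the given value)'', asserts that this property and the value are invariant under the generating moves of $R_1(A)$, propagates the invariance along the generated equivalence relation $R(A)$, and then reads off the conclusion for the two-term sum $a_1 \dotplus a_2$. The only difference is one of detail: your splitting/grouping decomposition and the pull-out sub-lemma spell out exactly the invariance step that the paper compresses into the single phrase ``Since $A$ is a partial monoid.''
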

\begin{proof}
Consider the following property for $x = a_1\dotplus \dots \dotplus a_r\in \NN[A]$
\[
\mbox{(Property)}~~~\mbox{if~}\dotplus\mbox{~is replaced with~}+, x\mbox{~can be calculated in~}A.
\]
Since $A$ is a partial monoid,
for any $(x,y) \in R_1(A), x$ has this property if and only if $y$ has, and the sum for $x$
and that for $y$ are equal.
Then the same is true for any $(x,y) \in R(A).$ Suppose $(a_1 \dotplus a_2, a) \in R(A).$
Since $a$ can be calculated in $A,$ so is $a_1 \dotplus a_2$ and $a_1 + a_2 = a.$
\end{proof}

\begin{lem}
	$R(A)$ is multiplicative.
\end{lem}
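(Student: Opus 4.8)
The plan is to verify that $R(A)$ respects the multiplication of the semiring $\NN[A]$: I will show that $(x,y) \in R(A)$ implies $(xz, yz) \in R(A)$ for every $z \in \NN[A]$, which — together with reflexivity and commutativity of the product — is exactly multiplicativity of the relation. The device is the standard one for relations generated by a set. Fix $z \in \NN[A]$ and let $m_z \colon \NN[A] \to \NN[A]$ be multiplication by $z$; this is a well-defined map, additive because $\NN[A]$ is a semiring. Consider
\[
	S_z = (m_z \times m_z)^{-1}(R(A)) = \Set{ (x,y) | (xz, yz) \in R(A) }.
\]
As the preimage of an equivalence relation under a map, $S_z$ is itself an equivalence relation on $\NN[A]$. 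Therefore it suffices to prove $R_1(A) \subseteq S_z$: since $R(A)$ is by definition the smallest equivalence relation containing $R_1(A)$, this inclusion forces $R(A) \subseteq S_z$, which is the assertion.

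So the work reduces to testing the two families of generators of $R_1(A)$. Write $z = c_1 \dotplus \dots \dotplus c_s$ with $c_j \in A$. For a generator $(0 \dotplus x, x)$ of the first kind, distributivity and the partial ring axiom $0 \cdot c_j = 0$ give $m_z(0\dotplus x) = 0 \dotplus \dots \dotplus 0 \dotplus xz$ with $s$ copies of the generator $0 \in A$; removing these by $s$ applications of a first-kind relation shows $(m_z(0\dotplus x), m_z(x)) \in R(A)$. For a generator $(a_1 \dotplus a_2 \dotplus x, (a_1+a_2)\dotplus x)$ of the second kind, with $(a_1, a_2) \in A_2$, distributivity gives
\[
	m_z(a_1 \dotplus a_2 \dotplus x) = (a_1 c_1 \dotplus \dots \dotplus a_1 c_s) \dotplus (a_2 c_1 \dotplus \dots \dotplus a_2 c_s) \dotplus xz,
\]
and bilinearity condition (2) of the partial ring supplies, for each $j$, both $(a_1 c_j, a_2 c_j) \in A_2$ and $a_1 c_j + a_2 c_j = (a_1+a_2)c_j$. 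Hence each block $a_1 c_j \dotplus a_2 c_j$ may be collapsed to $(a_1+a_2)c_j$ by a single second-kind relation, and after $s$ such steps one reaches $m_z((a_1+a_2)\dotplus x)$; thus both generators lie in $S_z$.

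The only point carrying genuine content is the term-by-term collapse in the second case: its legitimacy rests entirely on axiom (2), which guarantees that each pair $(a_1 c_j, a_2 c_j)$ is summable with the prescribed sum, so that a bona fide relation of the second kind is available at every step. I expect the main thing to be careful about is the bookkeeping of distributing $m_z$ across a formal sum and confirming that each intermediate expression is a genuine element of $\NN[A]$ to which a generator of $R_1(A)$ applies; beyond the use of bilinearity, no new idea is required.
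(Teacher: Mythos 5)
Your proof is correct and is in essence the paper's own argument: reduce multiplicativity to the generators of $R(A)$ via minimality of the generated equivalence relation, and settle the generators using the bilinearity axiom (2) of partial rings; the paper merely organizes the bootstrap differently (closure under multiplication by a single $c\in A$ first, where generators map to generators, then by a general $\xi\in\NN[A]$ using additivity, then pairs by transitivity), whereas you handle a general $z\in\NN[A]$ in one pass via the preimage relation $S_z$ at the cost of the multi-step collapses. One small correction: deducing the pairwise statement $(x,y),(x',y')\in R(A)\Rightarrow(xx',yy')\in R(A)$ from your one-sided statement uses transitivity (together with commutativity of the product), not reflexivity --- reflexivity only enters in the trivial converse direction.
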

\begin{proof}
	If $(\alpha, \beta) \in R_0(A)$ and $c\in A$ then $(\alpha c, \beta c) \in R_0(A).$
	Then it is readily checked that 
	\begin{enumerate}
		\item if $(\alpha, \beta) \in R(A)$ and $c\in A$ then $(\alpha c, \beta c) \in R(A),$
		\item if $(\alpha, \beta) \in R(A)$ and $\xi\in \NN[A]$ 
		then $(\alpha \xi, \beta \xi) \in R(A)$ and
		\item if $(\alpha, \beta) \in R(A)$ and $(\gamma, \delta)\in \NN[A]$ 
		then $(\alpha \gamma, \beta \delta) \in R(A),$
	\end{enumerate}
	which shows that $R(A)$ is multiplicative.
\end{proof}

It follows that $B(A) = (A, R(A))$ is a blueprint. 

Now, let $M$ be a commutative (multiplicative) monoid and $B = (M,R)$ be a blueprint.
Put 
\[
	R_0 = \Set{ (a_1\dotplus a_2, a ) \in R | a_1, a_2 , a \in M} \cup \Set{(0,\emptyset)}.
\]
\begin{defn}
A blueprint $(M, R)$ is {\bf generated by binary operations} if
$R$ is the smallest additive equivalence relation which contains $R_0.$
A blueprint $(M, R)$ is {\bf associative} if
$(a_1\dotplus a_2 \dotplus a_3, b)\in R, a_i\,(i = 1,2,3), b\in M$ 
then there exists $c\in M$ such that $(a_1\dotplus a_2, c) \in R.$
\end{defn}

\begin{lem}
Let $A$ be a partial ring.
The blueprint $B(A) = (A, R(A))$ is generated by binary operations and associative.
\end{lem}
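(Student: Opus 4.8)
The plan is to verify the two clauses of the definition in turn. For the claim that $B(A)$ is generated by binary operations, I would first identify the set $R_0$ attached to the blueprint $B(A)$ --- the set of pairs $(a_1 \dotplus a_2, a) \in R(A)$ with $a_1, a_2, a \in A$, together with $(0, \emptyset)$ --- with the set $R_0(A)$ introduced above. The inclusion $R_0 \subseteq R_0(A)$ is exactly Lemma \ref{lem:pring_to_blueprint_to_pring}: a pair $(a_1 \dotplus a_2, a) \in R(A)$ forces $(a_1, a_2) \in A_2$ and $a_1 + a_2 = a$. Conversely, every pair in $R_0(A)$ already occurs in $R_1(A)$ (take the auxiliary $x$ to be the empty sum), hence lies in $R(A)$, so $R_0(A) \subseteq R_0$. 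It then remains to show that $R(A)$, which is the smallest additive equivalence relation containing $R_1(A)$, is also the smallest additive equivalence relation containing $R_0(A)$.

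For this I would argue by mutual inclusion. Since $R_0(A) \subseteq R_1(A) \subseteq R(A)$, the smallest additive equivalence relation containing $R_0(A)$ is contained in $R(A)$. For the reverse, write $R'$ for the smallest additive equivalence relation containing $R_0(A)$ and check that $R'$ already contains both families of generators of $R_1(A)$. On $\NN[A]$ additivity means precisely that $(\alpha, \beta) \in R'$ and reflexivity $(x,x) \in R'$ yield $(\alpha \dotplus x, \beta \dotplus x) \in R'$; hence the generator $(0 \dotplus x, x)$ arises from $(0, \emptyset) \in R_0(A)$ and the generator $(a_1 \dotplus a_2 \dotplus x, (a_1 + a_2) \dotplus x)$ arises from $(a_1 \dotplus a_2, a_1 + a_2) \in R_0(A)$, in both cases by adding the reflexive pair $(x,x)$. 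Thus $R' \supseteq R_1(A)$ and therefore $R' \supseteq R(A)$, so the two relations coincide and $B(A)$ is generated by binary operations.

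To prove associativity, suppose $(a_1 \dotplus a_2 \dotplus a_3, b) \in R(A)$ with $a_1, a_2, a_3, b \in A$. Here I would reuse the invariant established inside the proof of Lemma \ref{lem:pring_to_blueprint_to_pring}: the property that a formal sum, after replacing each $\dotplus$ by $+$, can be calculated in $A$ is preserved along $R(A)$, and the resulting values agree. The single generator $b$ trivially has this property, so $a_1 \dotplus a_2 \dotplus a_3$ has it as well, i.e. $a_1 + a_2 + a_3$ can be calculated in $A$. From the calculability of this triple I would extract $(a_1, a_2) \in A_2$ using the partial monoid axiom (c) together with commutativity; setting $c = a_1 + a_2 \in A$ then gives $(a_1 \dotplus a_2, c) \in R_0(A) \subseteq R(A)$, which is what the definition demands.

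The inclusions in the first clause are routine bookkeeping with additivity and reflexivity. The step I expect to be the main obstacle is the very last one: deducing pairwise summability $(a_1, a_2) \in A_2$ from the mere calculability of $a_1 + a_2 + a_3$. This is exactly where strict associativity enters, since one must check that whatever admissible ordering and parenthesization computes the triple, axiom (c) and commutativity can rearrange it into a computation in which $a_1 + a_2$ appears as an inner sum. This is a short manipulation of the partial monoid axioms --- essentially the same calculation underlying Lemma \ref{lem:pring_to_blueprint_to_pring} --- but it is the one place where more than formal rewriting is involved.
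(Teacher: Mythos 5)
Your proof is correct and takes essentially the same route as the paper's: for associativity you transport calculability in $A$ along $R(A)$ (the invariant inside the proof of Lemma \ref{lem:pring_to_blueprint_to_pring}, extended to ternary sums) and then invoke strict associativity of the partial monoid to produce $c = a_1+a_2$, exactly as the paper does. Your first clause simply spells out the mutual-inclusion bookkeeping, including the identification $R_0 = R_0(A)$ via Lemma \ref{lem:pring_to_blueprint_to_pring}, behind what the paper dismisses as holding ``by its construction.''
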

\begin{proof}
	$B(A)$ is generated by binary operations by its construction.
	If $(a_1\dotplus a_2 \dotplus a_3, b)\in R, a_i\,(i = 1,2,3), b\in A,$
	then by Lemma \ref{lem:pring_to_blueprint_to_pring}, $a_1 + a_2 + a_3 = b$ in $A.$
	Then by the associativity of a partial monoid $A,$ there exists $c\in A$ 
	such that $a_1 + a_2 = c $ in $A.$ This means that $B(A)$ is associative.
\end{proof}

\begin{prop}
	The category of partial rings is isomorphic to the category of 
	proper cancellative and associative blueprints with zero which are 
	generated by binary operations.
\end{prop}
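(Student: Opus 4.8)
The plan is to prove the isomorphism by constructing a functor in each direction and checking that they are mutually inverse. In one direction I send a partial ring $A$ to the blueprint $B(A) = (A, R(A))$ constructed above. The preceding lemmas already show that $B(A)$ is associative and generated by binary operations, so it remains to verify that $B(A)$ is proper, cancellative, and has zero, and that a homomorphism $f\colon A \to A'$ of partial rings induces a morphism of blueprints: $f$ is a homomorphism of the underlying multiplicative monoids, and since it preserves summability and sums it carries $R_0(A)$ into $R_0(A')$, hence $R(A)$ into $R(A')$. In the other direction, given a proper cancellative associative blueprint $(M,R)$ generated by binary operations, I equip the multiplicative monoid $M$ with a partial additive structure: setting $A := \Psi(M,R)$ with underlying set $M$, I declare a pair $(a_1,a_2)$ summable precisely when $(a_1 \dotplus a_2, a) \in R_0$ for some $a \in M$, and put $a_1 + a_2 = a$.

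For the backward functor the first point to settle is that the sum is well defined: if $(a_1 \dotplus a_2, a)$ and $(a_1 \dotplus a_2, a')$ both lie in $R_0$, then $(a, a') \in R$ by symmetry and transitivity, and here the cancellative and proper hypotheses ensure that $R$ restricts to equality on $M \subseteq \NN[M]$, forcing $a = a'$. Next I verify the partial monoid axioms for $A$. Axioms (a) and (b) are immediate: the pair $(0,\emptyset) \in R_0$ together with additivity gives $(0 \dotplus a, a) \in R_0$, so $0+a = a$, and commutativity of $\dotplus$ in the free abelian monoid $\NN[M]$ gives the symmetry of summability. Axiom (c), the associativity biconditional, is where the associativity of the blueprint does the real work: from $(a_1 \dotplus a_2,\, a_1{+}a_2)$ and $((a_1{+}a_2)\dotplus a_3,\, (a_1{+}a_2){+}a_3)$ in $R_0$ one deduces $(a_1 \dotplus a_2 \dotplus a_3,\, (a_1{+}a_2){+}a_3) \in R$, and the associativity hypothesis produces $c \in M$ with $(a_2 \dotplus a_3, c) \in R$, so that $a_2{+}a_3$ exists; additivity of $R$ together with well-definedness then yields $(a_1 \dotplus (a_2{+}a_3),\, (a_1{+}a_2){+}a_3) \in R_0$ and the equality $(a_1{+}a_2){+}a_3 = a_1{+}(a_2{+}a_3)$. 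Finally, bilinearity of the multiplication inherited from $M$ follows from the multiplicativity of $R$, so $\Psi(M,R)$ is a genuine object of $\pring$.

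It then remains to check that the two constructions are mutually inverse. For $\Psi(B(A)) = A$ the underlying multiplicative monoids agree on the nose, and Lemma \ref{lem:pring_to_blueprint_to_pring} identifies the recovered additive data with the original: $(a_1 \dotplus a_2, a) \in R(A)$ holds if and only if $(a_1,a_2) \in A_2$ and $a_1 + a_2 = a$, which is exactly the defining data of $R_0(A)$. For $B(\Psi(M,R)) = (M,R)$ the underlying monoid is again unchanged, and by construction the set $R_0$ computed from $\Psi(M,R)$ coincides with the given $R_0$; since $(M,R)$ is generated by binary operations, $R$ is the smallest additive equivalence relation containing $R_0$, which is precisely $R(\Psi(M,R))$, so the two relations agree. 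Naturality in morphisms is routine. The main obstacle is the backward direction: confirming that \emph{exactly} the four stated hypotheses are what is needed, with associativity supplying axiom (c), and the cancellative and proper conditions supplying both the well-definedness of sums and the non-collapsing of $M$ inside $\NN[M]$. Getting the interplay of these four conditions to match the partial monoid axioms, rather than any single computation, is the delicate part.
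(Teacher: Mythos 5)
Your proposal is correct and takes essentially the same approach as the paper: the same pair of mutually inverse constructions, with properness and the zero giving a well-defined partial magma structure, blueprint associativity giving the partial monoid axiom (c), multiplicativity of $R$ giving the ring structure, and Lemma \ref{lem:pring_to_blueprint_to_pring} together with ``generated by binary operations'' identifying the two composites with the identity functors. You in fact supply strictly more detail than the paper, whose own proof only sketches the blueprint-to-partial-ring direction and breaks off mid-sentence on the converse; the one point you flag but never discharge --- that $B(A)$ is proper, cancellative and has a zero --- is likewise left unverified in the paper's proof.
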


\begin{proof}
	Let $B = (A, R)$ be a proper cancellative and associative blueprint with zero which is 
	generated by binary operations.
	We put 
	\[
		A_2 = \set{(a_1, a_2) \in A^2 | \exists a\in A \mbox{~s.t.~}(a_1 \dotplus a_2 , a ) \in R}
	\]
	and define $a_1 + a_2 = a$ if $(a_1 \dotplus a_2 , a ) \in R.$ Since $B$ is a proper
	blueprint with a zero, this determines a partial magma structure on $A.$
	Since $B$ is associative, $A$ is a partial monoid. Finally, $A$ is a partial ring since
	$R$ is multiplicative.
	
	Conversely, if $A$ is a partial ring, let $\NN[A]$ denote the semiring determined by
	the underlying multiplicative monoid of $A.$ 
	
\end{proof}

\section{Partial Schemes}

\subsection{Locally PRinged Spaces}

\begin{defn}[locally pringed space]
	A {\bf locally partial-ringed space} (locally pringed space for short) is a pair $(X, \calO_X)$ where $X$ is a topological space and $\calO_X$ is a
	sheaf of partial rings on $X$ whose stalks are local prings.
	Let $(X,\calO_X)$ and $(Y,\calO_Y)$ be two locally pringed spaces.
	A morphism $(X, \calO_X) \to (Y,\calO_Y)$ is
	\begin{enumerate}
		\item a map $f\colon X\to Y$ of topological spaces and
		\item a homomorphism $f^\#\colon \calO_Y\to f_* \calO_X$ of sheaves of partial rings over $Y$
		which induces a homomorpshim of local prings $o_{Y,f(x)} \to o_{X,x}$ for all point $x\in X.$
	\end{enumerate}
\end{defn}

\subsection{Affine Partial Schemes}
	In this section, we will define affine partial schemes.
	Most statements and proofs in \S 3.1 and \S 3.2 of \cite{lorscheid-the-geometry-of-1} 
	about affine blue schemes can be read as statements and proofs about affine partial schemes.
	We will give proofs for some of them rather when it is simpler than those in 
	\cite{lorscheid-the-geometry-of-1} to show the simplicity of our theory.
	
	Let $A$ be a partial ring and
	$X_A$ be the set of prime ideals of $A.$
	For any $a\in A,$ let $D(a)$ be the set of prime ideals $\ip$ such that $a\notin \ip.$
	We give $X_A$ the topology generated by $D(a)$'s for all $a\in A.$ This topology is called the Zariski topology.
	For any ideal $\ia,$ let $V(\ia)$ denote the set of prime ideals which contain $\ia.$
	\begin{prop}
		If $\ia$ does not meet a multiplicative set $S,$ then there exists a prime ideal $\ip$
		which contains $\ia$ and does not meet $S.$
	\end{prop}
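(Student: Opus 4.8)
The plan is to adapt the classical Zorn's-lemma argument that proves the existence of prime ideals avoiding a multiplicative set, taking care that the partial-ring notion of ideal and the partial-monoid summability conditions behave the way their commutative-ring counterparts do. Concretely, I would consider the collection
\[
	\Sigma = \Set{ \ib \mbox{~an ideal of~} A | \ia \subseteq \ib \mbox{~and~} \ib \cap S = \emptyset },
\]
ordered by inclusion. Since $\ia$ itself lies in $\Sigma$ (it contains $\ia$ and misses $S$ by hypothesis), $\Sigma$ is nonempty. The goal is to produce a maximal element of $\Sigma$ and then show that any such maximal element is automatically prime.

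\textbf{First} I would verify that $\Sigma$ satisfies the hypotheses of Zorn's lemma. Given a chain $\Set{\ib^{(\lambda)}}$ in $\Sigma$ totally ordered by inclusion, its union $\ib = \cup_\lambda \ib^{(\lambda)}$ should again be an ideal missing $S$. That the union misses $S$ is immediate since each member does; that the union is an ideal is the place where I must invoke the partial-monoid structure carefully. The definition of ideal requires $I_2 = A_2 \cap (I\times I)$ and closure under multiplication by arbitrary elements. The union-of-a-chain construction for partial submonoids was already established earlier in the excerpt (the totally-ordered union of partial submonoids is again a partial submonoid, with $B_2 = \cup_\lambda B^{(\lambda)}_2$), so the summability condition $\ib_2 = A_2 \cap (\ib\times\ib)$ follows because any summable pair from the union already lies in some single $\ib^{(\lambda)}$. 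Multiplicative absorption is inherited termwise. Thus $\ib \in \Sigma$ is an upper bound, and Zorn's lemma yields a maximal element $\ip \in \Sigma$.

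\textbf{The main obstacle}, and the step I expect to require the most care, is proving that the maximal element $\ip$ is prime. Suppose $ab \in \ip$ but $a\notin\ip$ and $b\notin\ip$. In the classical case one forms $\ip + (a)$ and $\ip + (b)$, notes they strictly contain $\ip$ hence must meet $S$, and derives a contradiction by multiplying the two witnessing elements. The delicate point in the partial setting is that elements of $\ip + (a)$ are sums $p + \sum_i c_i a$ which are only defined when the relevant summability conditions hold, so I cannot freely add or multiply. The cleanest route is to argue that $\ip$ being maximal in $\Sigma$ forces $\ip + (a)$ and $\ip + (b)$ to leave $\Sigma$, so each meets $S$: there exist $s = p_1 + u a \in S$ and $t = p_2 + v b \in S$ with $p_1, p_2 \in \ip$ and $u,v\in A$ (strictly, finite summable $A$-combinations). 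Then $st \in S$ since $S$ is multiplicative, while expanding $st$ by bilinearity of multiplication expresses it as a summable combination of terms each lying in $\ip$ (every term contains a factor from $\ip$, or the factor $ab\in\ip$), whence $st \in \ip$. This contradicts $\ip \cap S = \emptyset$.

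\textbf{The one technical check} that makes the final expansion legitimate is that the products and sums appearing in $st$ can actually be calculated in $A$ and land in $\ip$: bilinearity of the multiplication (condition (2) of the partial-ring definition) guarantees that distributing a product over a summable sum preserves summability, and the ideal axiom $\ip_2 = A_2 \cap (\ip\times\ip)$ guarantees that a sum of elements of $\ip$ that is summable in $A$ stays in $\ip$. So once I confirm that each monomial in the expansion of $st$ lies in $\ip$ and that the whole expression is summable, membership $st\in\ip$ follows. I would organize the write-up so that this summability bookkeeping is done once and cited, keeping the logical skeleton (Zorn plus the primality argument) transparent.
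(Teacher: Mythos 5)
Your proposal follows exactly the paper's argument: Zorn's lemma applied to the family of ideals containing $\ia$ and avoiding $S$, followed by the observation that for a maximal element $\ip$, if $ab\in\ip$ then the product of $\ip+(a)$ and $\ip+(b)$ lands in $\ip$, so at least one of these enlarged ideals avoids $S$ and maximality forces $a\in\ip$ or $b\in\ip$. Your version merely phrases this last step contrapositively (taking witnesses $s,t\in S$ and deriving $st\in\ip\cap S$) and spells out the summability bookkeeping that the paper leaves implicit, so it is correct and essentially the same proof.
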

	\begin{proof}
		Let $\Sigma$ be a family of ideals which contain $\ia$ and do not meet $S.$
		Since $\ia$ is an element of it, $\Sigma$ is not empty. By the Zorn's lemma,
		there exists a maximal element $\ip$ of $\Sigma.$ Suppose that $ab \in \ip.$
		Then $(\ip + (a)) (\ip + (b)) \subseteq \ip.$
		Since $\ip$ does not meet $S,$ at least one of  $(\ip + (a))$ and $(\ip + (b))$
		does not meet $S.$ 
		Since $\ip$ is maximal with this property, one of $a$ and $b$ is an element of $\ip.$
		This proves that $\ip$ is a prime ideal.
	\end{proof}
	\begin{cor}
		If $D(b) \subseteq D(a),$ then there exist $k\in \NN$ such that $b^k \in (a).$
	\end{cor}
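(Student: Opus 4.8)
The plan is to prove the statement by contradiction, reducing it entirely to the preceding proposition on the existence of a prime ideal avoiding a given multiplicative set. First I would restate the hypothesis $D(b)\subseteq D(a)$ in ideal-theoretic terms: by definition of $D(-)$ it says exactly that every prime $\ip$ with $a\notin\ip$ also has $b\notin\ip$, equivalently every prime ideal $\ip$ containing $a$ contains $b$ as well. So the goal becomes to show that ``$b$ lies in every prime over $(a)$'' forces some power $b^k$ into the ideal $(a)$.

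Next I would introduce the multiplicative set $S = \Set{ b^k | k\in\NN }$ generated by $b$, which contains $1 = b^0$ and is closed under multiplication, and then assume for contradiction that $b^k \notin (a)$ for every $k\in\NN$. Under this assumption $S$ is disjoint from the ideal $(a)$, so the preceding proposition applies with $\ia = (a)$ and this $S$, yielding a prime ideal $\ip$ with $(a)\subseteq\ip$ and $\ip\cap S = \emptyset$.

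From such a $\ip$ the contradiction is immediate: since $a\in(a)\subseteq\ip$ we have $\ip\notin D(a)$, whereas $b\in S$ together with $\ip\cap S=\emptyset$ gives $b\notin\ip$, that is $\ip\in D(b)$. This contradicts $D(b)\subseteq D(a)$, and hence some $b^k$ must lie in $(a)$, as required.

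I do not expect a serious obstacle here, since the argument is essentially bookkeeping layered on top of the preceding proposition; the only substantive input (the Zorn's-lemma construction of the avoiding prime) has already been supplied. The one point to check carefully is the degenerate case where $b$ is nilpotent (in particular $b=0$): there $b^k = 0$ for some $k$, and as the zero element belongs to every ideal we get $b^k = 0 \in (a)$ trivially. In fact this case is already subsumed by the contradiction setup, because if $0\in S$ then $S$ meets $(a)$ (as $0\in(a)$), so the assumption ``$b^k\notin(a)$ for all $k$'' fails at once; thus no separate treatment is strictly necessary.
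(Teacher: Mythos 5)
Your proof is correct and is essentially the paper's own argument: the paper proves this corollary by simply taking $\ia = (a)$ and $S = \{\, b^k \mid k \in \NN \,\}$ in the preceding proposition, which is exactly your reduction; you have merely written out the contradiction bookkeeping that the paper leaves implicit.
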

	\begin{proof}
		 Take $\ia = (a)$ and $S = \Set{b^k | k\in \NN}$ in the previous lemma.
	\end{proof}
	\begin{cor}
		If we put $\sqrt{\ia} = \Set{ a\in A | \exists r \in \NN\mbox{~s.t.~} a^r \in \ia},$ then
		\[
			\sqrt{\ia} = \bigcap_{\ia \subseteq \ip,~\ip\in X_A} \ip
		\]
	\end{cor}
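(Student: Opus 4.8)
The plan is to prove the two inclusions separately. The inclusion $\sqrt{\ia} \subseteq \bigcap_{\ia \subseteq \ip} \ip$ is the elementary one and rests only on primality, while the reverse inclusion is where the Proposition proved just above does the real work, applied to the multiplicative set consisting of the powers of a single element.

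For $\sqrt{\ia} \subseteq \bigcap_{\ia \subseteq \ip} \ip$: given $a \in \sqrt{\ia}$, choose $r \in \NN$ with $a^r \in \ia$, and let $\ip$ be an arbitrary prime ideal with $\ia \subseteq \ip$, so that $a^r \in \ip$. I would then run a short induction on $r$: writing $a^r = a \cdot a^{r-1}$ and invoking the defining implication of a prime ideal (if $xy \in \ip$ then $x \in \ip$ or $y \in \ip$), one deduces $a \in \ip$ from $a^r \in \ip$. Since $\ip$ ranges over all primes above $\ia$, this gives $a \in \bigcap_{\ia \subseteq \ip} \ip$.

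For the reverse inclusion I would argue contrapositively. Suppose $a \notin \sqrt{\ia}$; since $\NN$ includes $0$, this says exactly that $a^r \notin \ia$ for every $r \in \NN$, that is, the multiplicative set $S = \Set{ a^r | r\in \NN }$ is disjoint from $\ia$ (note $1 = a^0 \in S$, so this simultaneously records $\ia \neq A$). The Proposition then furnishes a prime ideal $\ip$ with $\ia \subseteq \ip$ and $\ip \cap S = \emptyset$; since $a = a^1 \in S$ we get $a \notin \ip$, whence $a \notin \bigcap_{\ia \subseteq \ip} \ip$. This establishes $\bigcap_{\ia \subseteq \ip} \ip \subseteq \sqrt{\ia}$.

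The reverse inclusion carries the mathematical content, but even there I anticipate no genuine obstacle: every step is governed by multiplication alone — the primality implication and the closure of $S$ under products — so the partial additive structure is inert and the classical argument transcribes directly once the Proposition is in hand. The one point meriting care is the role of $r = 0$: reading $a^0 = 1$ makes $S$ genuinely multiplicative and at the same time forces $\ia \neq A$ in the relevant case; in the degenerate case $\ia = A$ there are no primes above $\ia$, so the empty intersection is read as $A$, consistently with $\sqrt{A} = A$, and the contrapositive step never triggers because no element lies outside $\sqrt{A}$.
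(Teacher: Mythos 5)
Your proof is correct and follows essentially the same route as the paper: the easy inclusion via primality, and the reverse inclusion by applying the preceding proposition to the multiplicative set $S = \Set{a^r \mid r \in \NN}$ disjoint from $\ia$. Your extra attention to the $r=0$ case and to $\ia = A$ only makes explicit what the paper leaves implicit.
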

	\begin{proof}
		It is clear that $\sqrt{\ia} \subseteq \bigcap \ip.$ For the converse, assume that
		$a \in A$ is not in $\sqrt{\ia},$ or equivalently, $\ia$ does not meet the multiplicative 
		subset $S = \Set{ a^r | r\in \NN}$ of $A.$ Then
		by proposition, there exists a prime ideal $\ip$ which contains $\ia$
		and does not meet $S.$ In particular, $a\notin \ip$ and this proves that
		$\sqrt{\ia} \supseteq \bigcap \ip.$
	\end{proof}
	\begin{prop}
		$D(a)$ is quasi-compact for all $a\in A.$ In particular $X_A$ is quasi-compact.
	\end{prop}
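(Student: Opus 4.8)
The plan is to reduce to covers by basic open sets and then translate the covering condition into a purely algebraic statement about radicals, invoking the corollary that identifies $\sqrt{\ia}$ with the intersection of the primes containing $\ia$. Since a prime ideal $\ip$ satisfies $ab\notin\ip$ if and only if $a\notin\ip$ and $b\notin\ip$, one has $D(a)\cap D(b) = D(ab)$; thus the sets $D(a)$ are closed under finite intersection and form a basis for the Zariski topology. It therefore suffices to show that every cover of $D(a)$ by basic opens $\Set{D(b_\lambda)}_{\lambda}$ admits a finite subcover.

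First I would rewrite the covering condition in terms of closed sets. Passing to complements, $D(a)\subseteq \bigcup_\lambda D(b_\lambda)$ is equivalent to $\bigcap_\lambda V(b_\lambda) \subseteq V(a)$, that is, every prime ideal containing the ideal $I = (\Set{b_\lambda})$ also contains $a$. By the corollary describing the radical, this says precisely that $a\in\sqrt{I}$, so $a^r \in I$ for some $r\in\NN$.

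Next I would exploit the concrete description of the ideal $I$ generated by the $b_\lambda$. By definition an element of $I$ is a summable finite sum $a^r = c_1 b_{\lambda_1} + \dots + c_m b_{\lambda_m}$ with $c_j\in A$; in particular only finitely many generators $b_{\lambda_1},\dots,b_{\lambda_m}$ occur. Hence $a^r$ lies in the finitely generated ideal $(b_{\lambda_1},\dots,b_{\lambda_m})$, and since a prime ideal contains $a^r$ exactly when it contains $a$, we obtain $\bigcap_{j=1}^m V(b_{\lambda_j}) \subseteq V(a)$, i.e. $D(a)\subseteq \bigcup_{j=1}^m D(b_{\lambda_j})$. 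This is the desired finite subcover. The ``in particular'' assertion then follows by taking $a=1$: every prime ideal is proper, so $1\notin\ip$ for all $\ip$, whence $X_A = D(1)$ is quasi-compact.

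The step I expect to require the most care is confirming that the summability constraints of the partial-ring setting do not obstruct the extraction of a \emph{finite} combination of generators — this is precisely where the argument could deviate from the classical one. But the definition of $(\Set{b_\lambda})$ already requires the defining expression $c_1 b_{\lambda_1} + \dots + c_m b_{\lambda_m}$ to be summable (that is, $(c_1 b_{\lambda_1},\dots,c_m b_{\lambda_m})\in A_m$), so membership in $I$ automatically produces a finite, genuinely summable representation of $a^r$, and no additional subtlety arises.
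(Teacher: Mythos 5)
Your proof is correct and follows essentially the same route as the paper's: reduce to covers by basic opens, use the corollary $\sqrt{\ia}=\bigcap_{\ia\subseteq\ip}\ip$ to get $a^r\in(\Set{b_\lambda})$, and then invoke the definition of a generated ideal as finite summable sums to extract finitely many generators and hence a finite subcover. The only differences are cosmetic (you phrase the covering condition via complements $V(\cdot)$, and you spell out the basis fact $D(a)\cap D(b)=D(ab)$ and the case $X_A=D(1)$, which the paper leaves implicit).
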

	
	\begin{proof}
		It is sufficient to show that 
		if $D(a) = \cup_{\lambda\in \Lambda} D(a_{\lambda})$ for some subset $\set{a_\lambda}\subseteq A,$ then 
		there exist finite elements $\lambda_1, \dots , \lambda_r \in \Lambda$ such that
		$D(a) = D(a_{\lambda_1})\cup \dots \cup D(a_{\lambda_r}).$
		
		So assume that 
		$D(a) = \cup_{\lambda\in \Lambda} D(a_{\lambda})$ for some subset $\set{a_\lambda}\subseteq A.$
		Then $a\in \ip \iff \Set{a_\lambda}\subseteq \ip$ for all $\ip \in X_A,$ which implies
		that $\sqrt{(a)} = \sqrt{(\Set{a_\lambda})}.$ So there exists $r\in \NN$ such that
		$a^r \in (\Set{a_\lambda}),$ which means that we can take some
		$a_{\lambda_1}, \dots , a_{\lambda_s}$ and $c_1, \dots, c_s \in A$ such that
		$(c_1a_{\lambda_1}, \dots , a_sa_{\lambda_s} )\in A_s$ and
		$a^r =c_1a_{\lambda_1}+ \dots + a_sa_{\lambda_s},$ so 
		$a \in \sqrt{(a_{\lambda_1}, \dots , a_{\lambda_s})}.$
		Then 
		$\Set{a_{\lambda_1}, \dots , a_{\lambda_s}} \subseteq \ip \implies a\in \ip$ for all 
		$\ip \in X_A$ and this means that 
		$D(a) \subseteq D(a_{\lambda_1}) \cup \dots \cup D(a_{\lambda_s}).$
	\end{proof}
	
	For any open set $U$ of $X= X_A,$ let $S_U$ be the subset of $A$ consisting of all $a\in A$ such that
	$a\notin \ip$ for all $\ip \in U.$ Then $S_U$ is a multiplicative subset of $A.$
	If we put $\calO'_X(U) = S_U^{-1} A,$ then $\calO'_X$ is a presheaf of partial rings on $X.$ Its sheafification is
	denoted by $\calO_X.$
	
	\begin{prop}
		 If $x = \ip\in X_A$ is a point, the stalk $o_{X,x}$ at $x$ coincides with the 
		local pring $A_\ip.$
	\end{prop}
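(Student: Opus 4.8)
The plan is to identify the stalk with a filtered colimit and compute that colimit explicitly. Since sheafification does not change stalks, I would first replace $\calO_X$ by the presheaf $\calO'_X$ and write
\[
	o_{X,x} = \colim_{U \ni \ip} \calO'_X(U) = \colim_{U \ni \ip} S_U^{-1}A,
\]
the colimit being taken over the open neighbourhoods of $\ip$ directed by reverse inclusion, hence filtered. The basic opens $D(a)$ with $a\notin\ip$ form a base of the topology (using $D(ab) = D(a)\cap D(b)$, which follows from primeness of the ideals), and each such $D(a)$ is a neighbourhood of $\ip$; they are therefore cofinal, so the colimit may be computed as $\colim_{a\notin\ip} S_{D(a)}^{-1}A$.

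Next I would construct a comparison homomorphism to $A_\ip$. The central observation is that if $a\notin\ip$ then $S_{D(a)} \subseteq A\setminus\ip$: indeed $\ip \in D(a)$, so any $s$ invertible on all of $D(a)$ satisfies $s\notin\ip$. This gives compatible localization homomorphisms $S_{D(a)}^{-1}A \to A_\ip$ for every $a\notin\ip$, and hence a homomorphism of partial rings $\Phi\colon o_{X,x} \to A_\ip$ out of the colimit. It then remains to show that $\Phi$ is an isomorphism, that is, a bijection of underlying sets that respects summability in both directions.

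For surjectivity, given $b/s\in A_\ip$ with $s\notin\ip$, note $s\in S_{D(s)}$ (by definition every prime in $D(s)$ omits $s$) and $\ip\in D(s)$, so $b/s$ already lives in the stage $S_{D(s)}^{-1}A$ and maps to the prescribed element. For injectivity I would use a clearing-of-denominators trick: suppose representatives $b/s$ and $b'/s'$, coming from stages $D(a)$ and $D(a')$, have equal images in $A_\ip$, so there is $u\notin\ip$ with $us'b = usb'$. Put $c = ss'u$; then $c\notin\ip$ since $\ip$ is prime and each factor omits $\ip$, and $s,s',u\in S_{D(c)}$ because every prime in $D(c)=D(s)\cap D(s')\cap D(u)$ omits each factor of $c$. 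Thus both fractions restrict into the common stage $S_{D(c)}^{-1}A$, where the witness $u\in S_{D(c)}$ forces $b/s = b'/s'$; hence the two elements are already identified in the colimit. The identical argument applied to the defining relation $(us'b, usb')\in A_2$ for summability of $(b/s, b'/s')$ in $(A_\ip)_2$ shows that a summable pair in $A_\ip$ becomes summable already at the stage $S_{D(c)}^{-1}A$; conversely any pair summable at some stage is summable in $A_\ip$ because $S_{D(c)}\subseteq A\setminus\ip$.

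I expect the main obstacle to be precisely this last point. In the classical theory a bijection of underlying sets suffices, but here one must additionally verify that $\Phi$ transports the summability relation $A_2$ faithfully in both directions. The clearing trick handles this uniformly exactly because the colimit is filtered: both an equality of fractions and a summability relation holding in $A_\ip$ can be realized at a single stage $D(c)$, with $c = ss'u$ chosen so that all relevant denominators together with the witness $u$ are simultaneously invertible on $D(c)$.
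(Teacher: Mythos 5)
Your argument is essentially correct, and it is worth pointing out that the paper gives no proof of this proposition at all: it is one of the statements the author defers to Lorscheid's treatment of affine blue schemes (\S 3.1--3.2 of the cited paper), which are claimed to transcribe verbatim to partial rings. Your filtered-colimit computation is exactly the standard argument such a transcription would produce: sheafification preserves stalks, the basic opens $D(a)$ with $a\notin\ip$ are cofinal among neighbourhoods of $\ip$ (using $D(ab)=D(a)\cap D(b)$, valid here since $\ip$ is prime and an ideal), the inclusions $S_{D(a)}\subseteq A\setminus\ip$ induce the comparison map $\Phi$, and surjectivity comes from the stage $D(s)$. Crucially, you also identify and handle the one point that is genuinely specific to partial rings and invisible in the classical case: $\Phi$ must transport the summability relation faithfully in both directions. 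Your treatment of this is correct because filtered colimits of partial monoids (hence of partial rings) are computed naively --- a pair of germs is summable if and only if representatives are summable at some common stage --- so both an equality and a summability relation in $A_\ip$ can be pulled back to a single stage by clearing denominators.

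One small imprecision to repair: with $c = ss'u$, the open $D(c)$ need not be contained in $D(a)\cap D(a')$ (take $s=s'=u=1$, so $D(c)=X_A$), so there are no restriction maps from the stages $D(a)$, $D(a')$ to $D(c)$ in the directed system, and the phrase ``both fractions restrict into the common stage $S_{D(c)}^{-1}A$'' is not literally meaningful. The fix is immediate: take $c = aa'ss'u$ instead, so that
\[
	D(c) = D(a)\cap D(a')\cap D(s)\cap D(s')\cap D(u)\ni \ip,
\]
and the same factor-omitting argument gives $s,s',u\in S_{D(c)}$; now both germs honestly restrict to the stage $D(c)$, where the witness $u$ forces the desired equality (respectively, summability), and the rest of your proof goes through unchanged. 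Alternatively, keep $c=ss'u$, observe $s\in S_{D(c)}$ so that $b/s$ defines an element of $S_{D(c)}^{-1}A$, and identify the germs $(D(a),\, b/s)$ and $(D(c),\, b/s)$ at the common refinement $D(ac)$.
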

	
	The locally pringed space $(X_A, \calO_X)$ constructed above is denoted by $\Spec (A).$
	
	\begin{defn}[affine pscheme]
		A locally pringed space is called an {\bf affine partial scheme} (affine pscheme for short)
		 if it is isomorphic
		as a locally pringed space to $\Spec A$ for a partial ring $A.$ 
	\end{defn}
	
	Let $A, B$ be partial rings and $\varphi\colon A\to B$ be a homomorphism.
	Suppose that $\Spec(A) = (X,\calO_X)$ and $\Spec(B) = (Y,\calO_Y).$

	\begin{prop}
		Let $A$ be a partial ring and $(X,\calO_X) = \Spec (A).$ 
		For any element $s\in A,$ there exists a monomorphism $A_s \to \calO_X(D(s)).$
	\end{prop}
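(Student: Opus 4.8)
The plan is to exhibit the desired arrow as a canonical composite and then prove it is injective, since a homomorphism of partial rings is a monomorphism exactly when it is injective on underlying sets (monomorphisms in $\pring$ are detected in $\pmon$, where, as noted in the excerpt, they coincide with injective maps of underlying sets; limits in $\pring$ are created in $\pmon$).

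First I would construct the map. Writing $S = \Set{s^k | k\in\NN}$ for the multiplicative set generated by $s$, so that $A_s = S^{-1}A$, I observe that $S \subseteq S_{D(s)}$: every $\ip\in D(s)$ has $s\notin\ip$, hence $s^k\notin\ip$ because $\ip$ is prime, so $s^k$ does not vanish anywhere on $D(s)$. By the universal property of localization this yields a homomorphism $A_s \to S_{D(s)}^{-1}A = \calO'_X(D(s))$, and composing with the sheafification map $\calO'_X(D(s)) \to \calO_X(D(s))$ gives the candidate $\varphi\colon A_s \to \calO_X(D(s))$, sending $a/s^n$ to the section determined by the fraction $a/s^n$.

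To analyze the kernel I would pass to stalks. Since $\calO_X$ is a sheaf and limits in $\pring$ are computed on underlying sets, the map sending a section over $D(s)$ to the family of its germs is injective; hence $\varphi(a/s^n)=0$ if and only if the germ of $a/s^n$ vanishes in every stalk $(\calO_X)_\ip = A_\ip$ with $\ip\in D(s)$. Here I use the proposition already proved that the stalk at $\ip$ is $A_\ip$, together with the fact that sheafification does not alter stalks, so that the germ of $\varphi(a/s^n)$ at $\ip$ is simply the image of $a/s^n$ under the localization $A_s \to A_\ip$. That image is $0$ precisely when there is some $u\notin\ip$ with $ua=0$.

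Finally I would translate this into the language of prime ideals. Put $\Ann(a) = \Set{u\in A | ua=0}$; using bilinearity of the multiplication one checks that this is an ideal (if $u,v\in\Ann(a)$ and $(u,v)\in A_2$ then $(u+v)a = ua+va = 0$, and $cu\in\Ann(a)$ for every $c\in A$). The vanishing condition above now reads: for every $\ip\in D(s)$ we have $\Ann(a)\not\subseteq\ip$; equivalently, every prime ideal containing $\Ann(a)$ must contain $s$. By the corollary expressing $\sqrt{\Ann(a)}$ as the intersection of the primes containing it, this forces $s\in\sqrt{\Ann(a)}$, so $s^k a = 0$ for some $k\in\NN$; but then $a/s^n = 0$ already in $A_s$, so $\varphi$ is injective. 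I expect the main obstacle to be the bookkeeping in the stalk step: one must be sure that the stalk of the presheaf $\calO'_X$ agrees with that of its sheafification $\calO_X$ and equals $A_\ip$, and that a section of a sheaf of partial rings vanishes exactly when all of its germs do. Both facts rest on limits in $\pring$ being created on underlying sets; granting them, the remaining radical-ideal computation is exactly the classical one and is supplied by the corollaries already established for $\Spec A$.
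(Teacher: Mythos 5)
Your construction of the map and your stalkwise analysis parallel the paper's proof, but the final step contains a genuine gap: you establish only that $\varphi(a/s^n)=0$ forces $a/s^n=0$, i.e., that the preimage of $0$ is trivial, and then conclude that $\varphi$ is injective. In $\pring$ (as in $\pmon$) this implication is invalid: addition is only partially defined and there are no additive inverses, so one cannot convert $\varphi(\xi)=\varphi(\eta)$ into the vanishing of a difference, which is what the classical ``trivial kernel implies injective'' argument secretly does. For instance, the partial monoid $\Set{0,a,b}$ with only trivial sums maps onto $\Set{0,c}$ by $a,b\mapsto c$; the preimage of $0$ is $\Set{0}$, yet the map is not injective, and such examples persist for partial rings. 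Injectivity of a partial-ring homomorphism must therefore be checked on arbitrary pairs of elements, not just against $0$.

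The paper's proof addresses exactly this point: given $\varphi(a/s^r)=\varphi(b/s^q)$, so that $a/s^r=b/s^q$ in $A_\ip$ for every $\ip\in D(s)$, it forms the equalizer ideal $\ia=\Set{x\in A | xs^q a = xs^r b}$ (an ideal by bilinearity, just as your $\Ann(a)$ is), observes that $\ia\not\subseteq\ip$ for all $\ip\in D(s)$, and applies the same radical corollary you invoke to conclude $s\in\sqrt{\ia}$; then $s^k\in\ia$ gives $s^{k+q}a=s^{k+r}b$, i.e., $a/s^r=b/s^q$ in $A_s$. Your argument becomes correct, and essentially identical to the paper's, once you replace $\Ann(a)$ by this two-sided comparison ideal and run the stalk computation for a pair of fractions rather than for a single fraction against $0$; as written, however, it proves a strictly weaker statement than the proposition requires.
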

	\begin{proof}
		We can define a homomorphism $\varphi\colon A_s \to \calO_X(D(s))$ by putting\\
		$\varphi(a/s^r) (\ip) = a/s^r \in A_{\ip}$ for all $\ip \in X.$
		Suppose $\varphi(a/s^r) = \varphi(b/s^q).$
		This means that $a/s^r = b/s^q$ in $A_\ip$ for all $\ip\in D(s),$ that is, we have $h\notin \ip$
		such that $hs^q a = hs^r b.$ 
		If we put
		\[
			\ia = \Set{ x\in A | xs^qa = xs^rb},
		\]
		then $\ia$ is an ideal of $A$ and $\ia \not\subseteq \ip$ for all $\ip \in D(s).$ Then we have that
		$\sqrt{(s)} \subseteq \sqrt{\ia}.$ If $s^k \in \ia,$ then $s^{k+q} a = s^{k+r} b, $ which means that
		$a/s^r = b/s^q$ in $A_s.$ Therefore $\varphi$ is injective.
	\end{proof}
	If we take $s=1$ in the above proposition, we obtain a monomorphism $\gamma \colon A\to \calO_X(X).$
	The following lemma, which is an extraction from the proof of Lemma 3.16 in \cite{lorscheid-the-geometry-of-1}
	is useful.
	\begin{lem}\label{lem:integralization}
		Let $A$ be a partial ring and $(X,\calO_X) = \Spec(A).$ We put $B = \calO_X(X).$ 
		For any element $\sigma \in B,$ there exists an element $s\in A$ such that $s\sigma \in \gamma(A).$		
	\end{lem}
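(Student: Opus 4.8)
The plan is to represent $\sigma$ by honest fractions on a finite cover, to convert the overlap conditions into subtraction-free equations in $A,$ and then to clear denominators by multiplying with a single element, thereby avoiding any appeal to a partition of unity. For the local models: by definition of the sheafification $\calO_X=(\calO'_X)^+,$ every point $\ip$ has a basic neighbourhood $D(f)$ on which $\sigma$ is the image of a section of $\calO'_X(D(f))=S_{D(f)}^{-1}A.$ Any denominator $t\in S_{D(f)}$ satisfies $D(f)\subseteq D(t),$ so the corollary $D(b)\subseteq D(a)\Rightarrow b^k\in(a)$ gives $f^l\in(t)$ for some $l;$ hence $t$ is invertible in $A_f$ and the section may be rewritten with a power of $f$ as denominator. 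As $X$ is quasi-compact, I extract a finite cover $X=\bigcup_{i=1}^n D(f_i)$ on which $\sigma|_{D(f_i)}$ is the image of some $a_i/f_i^{m_i}\in A_{f_i}.$

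Next I would exploit the overlaps. Writing $g_i=f_i^{m_i},$ one has $D(g_i)=D(f_i)$ and $D(g_i)\cap D(g_j)=D(g_ig_j).$ Because the map $A_{g_ig_j}\to\calO_X(D(g_ig_j))$ established above is a monomorphism, the two restrictions of $\sigma$ force $a_i/g_i=a_j/g_j$ in $A_{g_ig_j};$ unwinding the definition of equality in a localization gives, after taking the maximum over the finitely many pairs, a common $N$ with $(g_ig_j)^N g_j a_i=(g_ig_j)^N g_i a_j$ for all $i,j.$ Setting $h_i=g_i^{N+1}$ and $c_i=g_i^N a_i,$ I obtain $D(h_i)=D(f_i),$ $\sigma|_{D(h_i)}=c_i/h_i,$ and the key identities $h_j c_i=h_i c_j$ in $A,$ all of which are genuine equations in the partial ring requiring no subtraction.

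Finally I would clear denominators with a single element. Fix any index $k$ and put $s=h_k.$ On each $D(h_j)$ the identity $h_k c_j=h_j c_k$ gives $s\sigma=h_k c_j/h_j=h_j c_k/h_j=c_k/1$ in $A_{h_j},$ so $s\sigma$ and $\gamma(c_k)$ have the same restriction to every member of the cover $\{D(h_j)\}.$ Since these cover $X,$ the identity axiom for the sheaf $\calO_X$ yields $s\sigma=\gamma(c_k),$ which is the assertion with the element $s=h_k.$

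The step I expect to be the real obstacle is the last one, or rather the recognition that it must take this form. The classical route would observe that $X=\bigcup D(h_i)$ forces $(h_1,\dots,h_n)=A,$ write $1=\sum_i e_i h_i,$ and set $a=\sum_i e_i c_i$ to conclude $\sigma=\gamma(a)$ outright. In a partial ring this breaks down: even though $\sum_i e_i h_i$ is summable, there is no reason for $\sum_i e_i c_i$ to be summable, and this failure of summability is exactly the obstruction to the stronger equality $\sigma\in\gamma(A).$ Multiplying by the single element $h_k$ circumvents summability entirely through the cancellation $h_k c_j=h_j c_k,$ which is why the lemma is necessarily stated with a multiplier $s.$
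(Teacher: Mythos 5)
Your overall architecture is the one the paper intends: the paper's own ``proof'' is nothing but a pointer to the second paragraph of the proof of Lemma 3.16 in Lorscheid's paper, and that argument is essentially your second and third paragraphs --- a finite cover by principal opens on which $\sigma$ is an honest fraction, cross-multiplication identities $h_jc_i=h_ic_j$ in $A$ extracted from the overlaps via the monomorphism $A_s\to\calO_X(D(s))$, and then multiplication by a single $h_k$ together with the sheaf identity axiom. Those two paragraphs are correct as written, and your closing observation about why the classical partition-of-unity argument is unavailable (summability of $\sum e_ih_i$ does not give summability of $\sum e_ic_i$) is exactly the right diagnosis of why the lemma carries a multiplier $s$.

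There is, however, a genuine gap in your first paragraph, and ironically it is the very phenomenon your last paragraph warns about. You claim that $f^l\in(t)$ makes $t$ invertible in $A_f$. In a partial ring the ideal $(t)$ is the set of \emph{summable sums} $a_1t+\dots+a_rt$, and such a sum need not factor as $bt$ for a single $b\in A$: summability of $(a_1t,\dots,a_rt)$ does not give summability of $(a_1,\dots,a_r)$, since bilinearity only runs in the other direction. So $f^l\in(t)$ yields $f^l=a_1t+\dots+a_rt$, not $f^l=bt$, and one cannot conclude $t\cdot(b/f^l)=1$ in $A_f$; in a partial ring, $1\in(t)$ need not make $t$ a unit. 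Consequently the rewriting of $a/t$ as a fraction with a power of $f$ in the denominator is unjustified as stated. The repair is cheap and avoids invertibility altogether: since $t\in S_{D(f)}$ gives $D(f)\subseteq D(t)$, one has $D(ft)=D(f)\cap D(t)=D(f)$; so replace $f$ by $g:=ft$ and observe that $a/t=af/g$, which is (the image of) an element of $A_g$ with denominator $g$ itself. Running your argument with the cover $\{D(g_i)\}$ and these fractions, the rest of your proof goes through verbatim.
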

	\begin{proof}
		We can prove this similarly as in the second paragraph of the proof of Lemma 3.16 of \cite{lorscheid-the-geometry-of-1}.
	\end{proof}
	\begin{thm}
		Let $A$ be a partial ring and $(X,\calO_X) = \Spec(A).$ We put $B = \calO_X(X)$ and let $(Y,\calO_Y) = \Spec (B).$
		Then $(X, \calO_X)$ and $(Y, \calO_Y)$ are isomorphic.
	\end{thm}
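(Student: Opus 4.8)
The homomorphism $\gamma\colon A\to B=\calO_X(X)$, shown to be a monomorphism by taking $s=1$ in the preceding proposition, induces a morphism of locally pringed spaces $(f,f^{\#})\colon (Y,\calO_Y)\to(X,\calO_X)$ whose underlying map is $f(\iq)=\gamma^{-1}(\iq)$. The plan is to prove that $f$ is a homeomorphism and that $f^{\#}$ is an isomorphism on every stalk; this mirrors the classical identification $\Spec\Gamma(\Spec A)\cong\Spec A$, the only new subtlety being that $\gamma$ need not be surjective. The two inputs I would rely on are the injectivity of $\gamma$ and the integralization Lemma \ref{lem:integralization}, together with the fact (from the proposition identifying stalks) that the germ of a global section at $\ip$ lives in $A_\ip$, so that evaluation gives a partial ring homomorphism $\mathrm{ev}_\ip\colon B\to A_\ip$ whose composite with $\gamma$ is the localization $A\to A_\ip$.

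For surjectivity of $f$ on points I would use these evaluation maps: for a prime $\ip$ of $A$, the ideal $\iq_\ip=\mathrm{ev}_\ip^{-1}(\im_{A_\ip})$ is prime, and since $\mathrm{ev}_\ip\circ\gamma$ is the localization map one checks directly $\gamma^{-1}(\iq_\ip)=\ip$. This simultaneously produces a candidate inverse $g\colon X\to Y,\ \ip\mapsto\iq_\ip$, with $f\circ g=\mathrm{id}_X$. The remaining content is $g\circ f=\mathrm{id}_Y$, i.e. that $\iq_\ip$ is the \emph{only} prime of $B$ lying over $\ip$; here I would take $\sigma\in B$, use Lemma \ref{lem:integralization} to write $\gamma(s)\sigma=\gamma(a)$, and feed the resulting relation through $\gamma^{-1}$ and through $\mathrm{ev}_\ip$, choosing a local fraction representative $\mathrm{ev}_\ip(\sigma)=a'/s'$ with $s'\notin\ip$ to conclude that membership of $\sigma$ in a prime over $\ip$ is detected by $\mathrm{ev}_\ip(\sigma)\in\im_{A_\ip}$. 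Continuity of $f$ is immediate from $f^{-1}(D(s))=D(\gamma(s))$, and openness follows once one knows, again via Lemma \ref{lem:integralization}, that the sets $D(\gamma(a))$ with $a\in A$ form a basis of $Y$.

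Finally I would verify that $f^{\#}$ is an isomorphism by checking it on stalks, where it is the map $A_\ip\to B_{\iq_\ip}$ induced by $\gamma$ (using that the stalks of $\calO_X$ and $\calO_Y$ are $A_\ip$ and $B_{\iq_\ip}$ by the cited proposition). The composite $A_\ip\to B_{\iq_\ip}\xrightarrow{\mathrm{ev}}A_\ip$ is the identity, so this map is a split monomorphism; to get surjectivity I would take a germ $\sigma/t\in B_{\iq_\ip}$, clear $t$ and $\sigma$ using Lemma \ref{lem:integralization} to express them as fractions of elements of $\gamma(A)$, and use the local fraction representative with denominator outside $\ip$ to realize $\sigma/t$ as the image of an element of $A_\ip$.

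The step I expect to be the main obstacle is precisely this isomorphism $A_\ip\cong B_{\iq_\ip}$ on stalks, equivalently the uniqueness of the prime over $\ip$. The integralization lemma only supplies a multiplier $s$ with $\gamma(s)\sigma\in\gamma(A)$ with no control over whether $s\in\ip$, so the naive argument breaks exactly when $s\in\ip$; the fix is to combine it with the local representation of germs as fractions with denominators outside $\ip$, and one must check throughout that the summability conditions defining the partial-ring localizations $A_\ip$ and $B_{\iq_\ip}$ are respected, so that the cleared-denominator elements are genuinely summable and the comparison maps are partial ring homomorphisms.
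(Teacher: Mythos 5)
Your overall architecture (induce $f\colon Y\to X$ from $\gamma$, prove it is a homeomorphism, prove the stalk maps are isomorphisms) is the natural fleshing-out of the proof the paper cites from Lorscheid, and the easy half of it is fine: $f(\iq)=\gamma^{-1}(\iq)$ is continuous with $f^{-1}(D(a))=D(\gamma(a))$, the germ maps satisfy $\mathrm{ev}_\ip\circ\gamma=\lambda_\ip$, so $\iq_\ip=\mathrm{ev}_\ip^{-1}(\im_{A_\ip})$ is a prime of $B$ with $\gamma^{-1}(\iq_\ip)=\ip$, giving surjectivity of $f$ and the split monomorphism $A_\ip\to B_{\iq_\ip}$. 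The genuine gap is at the step you yourself flagged, and your proposed fix does not close it. Fix $\iq$ prime in $B$ with $\gamma^{-1}(\iq)=\ip$ and $\sigma\in B$. Lemma \ref{lem:integralization} gives a \emph{global} identity $\gamma(s)\sigma=\gamma(a)$ with no control on $s$; the germ gives $\mathrm{ev}_\ip(\sigma)=a'/s'$ with $s'\notin\ip$, but this is only an identity in $A_\ip$: unwinding it yields $usa'=us'a$ in $A$ for some $u\notin\ip$, a relation among elements of $A$ that does not involve $\sigma$ at all. When $s\in\ip$, both implications you need (that $a'\in\ip$ forces $\sigma\in\iq$, and that $a'\notin\ip$ forces $\sigma\notin\iq$) are blocked, because primality of $\iq$ only lets you cancel $\gamma(s)$ when $\gamma(s)\notin\iq$; with $\gamma(s)\in\iq$ the relation $\gamma(s)\sigma=\gamma(a)\in\iq$ holds whether or not $\sigma\in\iq$, so membership of $\sigma$ in $\iq$ simply cannot be read off from these two facts. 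The same defect infects your basis claim (you need $\gamma(a)=\gamma(s)\sigma\notin\iq$, which fails if $\gamma(s)\in\iq$) and your stalk-surjectivity argument (clearing denominators with a multiplier lying in $\ip$ produces fractions that are not legal elements of $B_{\iq_\ip}$).

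What is actually needed is a pointwise strengthening of Lemma \ref{lem:integralization}: for every $\sigma\in B$ and every $\ip\in X$ there exist $s\in A\setminus\ip$ and $a\in A$ with $\gamma(s)\sigma=\gamma(a)$ \emph{globally in} $B$ (so that in particular $\mathrm{ev}_\ip(\sigma)=a/s$). This is strictly more than "germ is a fraction with denominator outside $\ip$", and proving it requires going back into the construction of $\calO_X$ rather than quoting the two stated results: cover $X$ by finitely many basic opens on which $\sigma$ is a fraction (using quasi-compactness of $X$), compare the representatives on overlaps via the injectivity of $A_t\to\calO_X(D(t))$ from the preceding proposition, and use separatedness of the sheaf to promote the identity valid near $\ip$ to a global identity after multiplying by a suitable power of an element outside $\ip$. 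Once that strengthened lemma is available, every remaining step of your outline (uniqueness of the prime over $\ip$, openness of $f$, surjectivity on stalks) goes through exactly as you describe; but that strengthening is the real content here, and it is precisely what the paper delegates to Lorscheid's Lemmas 3.16 and 3.18 when it declines to give a self-contained proof.
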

	\begin{proof}
		This theorem is only a mixture of Lemma 3.16 and Lemma 3.18 of \cite{lorscheid-the-geometry-of-1}.
		At the final step of the proof of
		Lemma 3.18 of \cite{lorscheid-the-geometry-of-1}, we can use Lemma \ref{lem:integralization}.
	\end{proof}
	
	The following corollary is immediate from the theorem.
	
	\begin{cor}\label{cor:one-one-correspondence}
		There exists a one-to-one correspondence between the morphisms $\Spec A\to \Spec B$ of 
		affine pschemes and the morphisms $B\to \Gamma ( \Spec A )$ of partial rings.
	\end{cor}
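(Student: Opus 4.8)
The plan is to produce the bijection functorially and let the theorem do the essential work. Write $\Gamma = \calO_X(X) = \Gamma(\Spec A)$ for the ring of global sections, so that $\gamma\colon A\to\Gamma$ is the canonical monomorphism obtained by taking $s=1$ in the proposition above, and let $\gamma_B\colon B\to\calO_Y(Y)$ be the analogous map for $B$. The theorem supplies an isomorphism of locally pringed spaces $\Phi\colon \Spec A \xrightarrow{\sim} \Spec\Gamma$ (it is the inverse of $\Spec\gamma$). First I would define the two candidate maps. In one direction, send a morphism $g\colon \Spec A\to\Spec B$ of locally pringed spaces to
\[
  \Theta(g) = \Gamma(g)\circ\gamma_B \colon B\longrightarrow \Gamma(\Spec A),
\]
where $\Gamma(g)\colon \calO_Y(Y)\to\calO_X(X)$ is the effect of the sheaf part $g^\#$ on global sections. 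In the other direction, send a homomorphism $\varphi\colon B\to\Gamma(\Spec A)$ to
\[
  \Psi(\varphi) = \Spec(\varphi)\circ\Phi \colon \Spec A\longrightarrow\Spec B,
\]
using functoriality of $\Spec$ to form $\Spec(\varphi)\colon\Spec\Gamma\to\Spec B$ and the theorem's isomorphism $\Phi$. Both $\Theta$ and $\Psi$ are manifestly well defined; the content is that they are mutually inverse.

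Next I would check $\Theta\circ\Psi = \mathrm{id}$. Unwinding the definitions and using contravariance of $\Gamma$ on composites, $\Theta(\Psi(\varphi)) = \Gamma(\Phi)\circ\Gamma(\Spec\varphi)\circ\gamma_B$. Naturality of the canonical map $\gamma$ applied to $\varphi$ rewrites $\Gamma(\Spec\varphi)\circ\gamma_B = \gamma_\Gamma\circ\varphi$, so that $\Theta(\Psi(\varphi)) = \bigl(\Gamma(\Phi)\circ\gamma_\Gamma\bigr)\circ\varphi$, and it remains to see that $\Gamma(\Phi)\circ\gamma_\Gamma = \mathrm{id}_\Gamma$. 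This is exactly the point at which the theorem is indispensable: because $\Phi$ (equivalently $\Spec\gamma$) is an isomorphism, the pointed endofunctor $\Gamma\circ\Spec$ is idempotent, which forces $\gamma_\Gamma = \Gamma(\Spec\gamma) = \Gamma(\Phi)^{-1}$, whence $\Gamma(\Phi)\circ\gamma_\Gamma=\mathrm{id}_\Gamma$. (Naturality of $\gamma$ applied to the monomorphism $\gamma\colon A\to\Gamma$ already yields this identity after precomposition with $\gamma$, and idempotency is what upgrades it to an honest equality.)

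The reverse composite $\Psi\circ\Theta = \mathrm{id}$ is the harder direction, and I expect it to be the main obstacle. Here one must show that a morphism $g\colon\Spec A\to\Spec B$ of locally pringed spaces is completely recovered from the induced map $\Theta(g)$ on global sections. Unlike the computation above, this is genuinely non-formal: it uses the local-pring condition on the stalk homomorphisms together with the explicit description of $\Spec B$—its points are the prime ideals of $B$, and its structure sheaf is assembled from the localizations $S_U^{-1}B$—in the manner of the classical argument for affine schemes. Concretely, one verifies both that the underlying continuous map of $g$ is determined (a prime of $\calO_X(X)$ pulls back along $\gamma_B$ and $\Theta(g)$ to the correct prime of $B$) and that $g^\#$ is determined on each basic open $D(s)$ by the localized map, invoking the monomorphism $B_s\to\calO(D(s))$ and the integralization Lemma; passing through $\Gamma(\Spec A)$ rather than $A$ itself is forced precisely because $\Spec A$ only determines $A$ up to its ring of global sections, which is what the theorem makes precise. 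Once this recovery statement is in hand, applying the theorem a second time (now for $B$, to identify $\Spec\calO_Y(Y)$ with $\Spec B$) closes the loop and shows $\Psi$ is a two-sided inverse of $\Theta$.
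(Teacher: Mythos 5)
Your overall architecture---build $\Theta$ from global sections and $\Psi$ from functoriality of $\Spec$ plus the theorem's isomorphism, then check the two composites---is exactly the derivation the paper intends. Note, though, that the paper offers no argument at all: it declares the corollary ``immediate from the theorem,'' which works only because the section's opening remark (and the theorem's own proof) imports Lorscheid's affine results wholesale; in particular his Lemma 3.18, which is precisely the statement that a morphism of affine pschemes is determined by, and reconstructible from, its effect on global sections. Measured against that, your proposal contains one step that fails as stated and one step that is only a sketch, and unfortunately they are the two places where all the content lives.

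The failing step is the ``idempotency'' argument for $\Gamma(\Phi)\circ\gamma_\Gamma=\mathrm{id}_\Gamma$, i.e.\ for the identity $\gamma_\Gamma=\Gamma(\Spec\gamma_A)$. This is not a formal consequence of the theorem. Naturality of $\gamma$ applied to $\gamma_A\colon A\to\Gamma$ gives only $\gamma_\Gamma\circ\gamma_A=\Gamma(\Spec\gamma_A)\circ\gamma_A$, and $\gamma_A$ is a monomorphism, not an epimorphism, so it cannot be cancelled. The purely categorical implication you invoke---a pointed endofunctor $(T,\eta)$ with every $T\eta_A$ invertible must satisfy $\eta_{TA}=T\eta_A$---is false: take the one-object category on the monoid $M=\langle \eta, u^{\pm1} \mid u \text{ central},\ \eta^2=u\eta\rangle$, with $T$ induced by the endomorphism $t(\eta)=u$, $t(u^{\pm1})=u^{\pm1}$; then $\eta$ is a natural transformation $\mathrm{id}\Rightarrow T$, $T\eta=u$ is invertible, yet $\eta_{T\bullet}=\eta\neq u=T\eta_\bullet$. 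Idempotency would follow if $(\Gamma\circ\Spec,\gamma)$ were known to be the monad of an adjunction $\Gamma\dashv\Spec$, but that adjunction \emph{is} the corollary being proved, so this route is circular. (A related point: your $\Phi$ requires knowing that the theorem's isomorphism is $(\Spec\gamma_A)^{-1}$ specifically, which is again more than the bare statement ``$\Spec A$ and $\Spec\Gamma(\Spec A)$ are isomorphic.'') In the present setting the identity $\gamma_\Gamma=\Gamma(\Spec\gamma_A)$ is true, but proving it requires the same stalkwise unwinding---localizations, the monomorphism $A_s\to\calO_X(D(s))$, the integralization lemma---as your ``hard direction'' $\Psi\circ\Theta=\mathrm{id}$, which you correctly identify as non-formal and for which you list the right ingredients, but which you do not carry out. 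Both items are exactly Lorscheid's Lemma 3.18 in disguise; since your formal layer cannot substitute for that concrete argument, the proposal as written is incomplete in the easy direction and missing its main step in the hard one.
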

	
	The following definition is a translation into our case from \cite{lorscheid-the-geometry-of-1}.
	\begin{defn}
		A partial ring of the form $\Gamma(X, \calO)$ for some affine pscheme $(X,\calO)$ is called
		{\bf global}.
	\end{defn}
	
	\begin{prop}
		Every partial field is global.
	\end{prop}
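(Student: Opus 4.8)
The plan is to realize a given partial field $K$ as $\Gamma(\Spec K)$ by showing that $\Spec K$ is a one-point space whose single stalk is $K$ itself. The first step is to identify the underlying set $X_K$. Recalling that $K$ is a partial field precisely when $(0)$ is a maximal ideal, I would argue that $(0)$ is moreover the \emph{only} prime ideal: if $\ip$ is a prime ideal then $1 \notin \ip$ (otherwise $\ip = K$), and were some nonzero $a$ to lie in $\ip$, invertibility of $a$ would give $1 = a^{-1}a \in \ip$, a contradiction. Hence $X_K = \{(0)\}$ is a single point.

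The second step is to evaluate the structure sheaf on this space. Since the only open subsets of a one-point space are $\emptyset$ and the whole space, every presheaf on it is already a sheaf, so sheafification is trivial and $\Gamma(\Spec K) = \calO_X(X_K) = \calO'_X(X_K) = S_{X_K}^{-1}K$, where $S_{X_K} = K \setminus \{0\}$; equivalently, this is the stalk at the unique point, which by the stalk proposition is the local pring $K_{(0)}$. I would then verify that the localization homomorphism $\lambda \colon K \to S_{X_K}^{-1}K$ is an isomorphism: every $a/s$ equals $\lambda(a s^{-1})$ because $s \in S_{X_K}$ is invertible, giving surjectivity, while $a/1 = b/1$ forces $u a = u b$ for some invertible $u$ and hence $a = b$, giving injectivity. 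This identifies $\Gamma(\Spec K)$ with $K$, so $K$ is global.

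The argument is short and largely formal once the space is seen to be a point, so I expect no deep obstacle; the one place requiring attention is the bookkeeping peculiar to partial rings, namely confirming that the one-point space genuinely trivializes sheafification and that the localization $S_{X_K}^{-1}K$ collapses back to $K$ when read through the explicit equivalence relation defining $S^{-1}A$ (where $a/s = b/t$ means $uta = usb$ for some $u \in S$), rather than appealing to any cancellation one takes for granted in ordinary rings.
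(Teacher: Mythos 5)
Your proposal is correct and takes essentially the same route as the paper: identify $\Spec K$ as the one-point space $\{(0)\}$ and observe that its global sections recover $K$ (the paper dismisses this second step as ``clear,'' while you verify it via the isomorphism $K \to S_{X_K}^{-1}K$). The only minor imprecision is your claim that every presheaf on a one-point space is a sheaf --- one still needs the value on $\emptyset$ to be terminal --- but since sheafification does not alter sections over the whole space (and here $S_\emptyset^{-1}K$ is in fact trivial because $0 \in S_\emptyset$), this does not affect the argument.
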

	\begin{proof}
		If $F$ is a partial field and $\Spec(F) = (X, \calO),$ then $X = \Set{ (0) }$ is a point.
		Then it is clear that $\calO_X(X) = F.$
	\end{proof}
	 
\subsection{Partial schemes}
\begin{defn}[partial scheme]
	A locally pringed space is called a {\bf partial scheme} if it is locally an affine pscheme.
\end{defn}

\subsection{Projective Spaces}

In the polynomial semiring $\NN[y_0, \dots, y_n]$ of $n+1$ indeterminates, we can consider a partial
monoid
\begin{align*}
	\NN_1[y_0, \dots, y_n]=\Set{ f(y_0, \dots, y_n) | \mbox{~the constant term of~}f \mbox{~is~}0\mbox{~or~}1 }, 
\end{align*}
in which two polynomials are summable if constant terms of them are summable in $\fun.$
Let $B$ be a partial subring of $\NN_1[x_0, \dots, x_n]$ which contains $\angles{y_0, \dots, y_n},$ the 
multiplicatively written free commutative monoid thought of as a partial ring with trivial sum.
Consider a multiplicative subset $S_i = \Set{ y_i^r | r\in \NN},$ then the localization $S_i^{-1} B$
has a $\ZZ$ grading by the degree of polynomials. Let
$A^{(i)}$ denote the 0-th part of $S_i^{-1} B\,(0\leq i \leq n).$ More precisely,
\begin{align*}
	A^{(i)} &= \Set{y_i^{-r} f_r | r\in \NN, f_r\in B \mbox{~: a homogeneous polynomial of degree~}r},\\
	A^{(i)} &= \Set{ (y_i^{-r} f_r,y_i^{-r} g_r) | (f_r, g_r ) \in B_2}.
\end{align*}
If $A^{(i,j)}$ denotes the localization of $A^{(i)}$ by the multiplicative subset \[\Set{(y_j/y_i)^r | r\in \NN},\]
then we have $A^{(i,j)}=A^{(j,i)}.$ This observation ensures that, for any partial ring $A,$ we can glue
affine pschemes $\AAA_i = \Spec A^{(i)}$ along the isomorphisms of open subschemes 
$\Spec A^{(i,j)}$ along the isomorphisms given by the equalities $A^{(i,j)}=A^{(j,i)}.$
Let $\PP^n_V$ denote the resulting partial scheme, where $V = \Spec(B),$ which is thought of as
a vector space determined by $B.$

\begin{example}\label{ex:projective}
	Let 
	\[
		B = \fun \angles {y_0, \dots, y_n | \exists(y_0 + \dots + y_n) }.
	\]
	At this point, we propose to think of this partial ring as the most fundamental one
	which is between $\angles{y_0, \dots, y_n}$ and $\NN_1[x_0, \dots, x_n].$ For example
	$\Hom_{\alg_{\fun}}(B, A)$ equals $A_{n+1},$ the summable $(n+1)$-tuples in $A,$
	which may be think of as the most fundamental $\fun$-module ``of rank $n+1$''.
	In this example, $\PP^n_V $ for this specific $B$ is abbreviated as $\PP^n.$
	
	In this case,
	\begin{align*}
		A^{(i)} &= \Set{\mbox{~subsum of~}(y_0/y_i + \dots + y_n/y_i)^r | r\in \NN}\cup \set{0},\\
		A^{(i,j)} &= \Set{y_i^s y_j^{-s}\left(\mbox{~subsum of~}(y_0/y_i + \dots + y_n/y_i)^r\right) |r\in \NN, s\in \ZZ}\cup \set{0},\\
		A^{(i,j,k)} &= \Set{
		\begin{array}{l}y_i^s y_j^t y_k^u\times\\
		\left(\mbox{~subsum of~}(y_0/y_i + \dots + y_n/y_i)^r\right)
		\end{array} |\begin{array}{l}
		r\in \NN, \\
		s,t,u\in \ZZ,\\
		s+t+u = 0
		\end{array}
		}\cup \set{0},\\
		\vdots	
	\end{align*}
	and so on. 
	
	Now let $F$ be a partial field. Then by Corollary \ref{cor:one-one-correspondence},
	there is a one to one correspondence between the set of $F$ points $\Spec F \to \AAA_i$ and
	\[
		A_i(F) = \Hom_{\alg_{\fun}}(A^{(i)}, F).
	\]
	Then the set of $F$-points $\Spec F\to \PP^n$ can be identified with
	\[
		\PP^n(F) = \coprod_{i=1}^n A_i(F)/\sim,
	\]
	where for $v_i \in A_i(F)$ and $v_j \in A_j(F),$ $v_i\sim v_j$
	if there exists a homomorphism $v\colon A^{(i,j)} \to F$ such that $v|_{A^{(i)}} = v_i$ and $v|_{A^{(j)}} = v_j.$
	
	For any subset $\set{i_1,\dots, i_r} \subseteq \Set{0,1,\dots , n},$ we put
	\[
		A_{i_1,\dots, i_r}(F) = \Hom_{\alg_{\fun}}\left(A^{(i_1,\dots, i_r)}, F \right).
	\]
	Then as sets,
	\[A_{i_1,\dots, i_r}(F) \cong
			\left(\begin{array}{l}
				n\mbox{-tuples~} (x_1, \dots, x_n)\mbox{~for which}\\
				1+x_1 + \dots +x_n \mbox{~can be calculated in~}F\\
				\mbox{and~} x_1, \dots , x_{r-1} \neq 0
			\end{array}\right).
	\]
	Then
	\[
		\# A_{i_1,\dots, i_r}(F)=  (\kappa-1)^{r-1}\kappa^{n-r+1},
	\]
	where $\kappa = \kappa(F)$ denotes the number of elements of $F$ which is summable with 1.
	Now we can calculate $\# \PP^n(F )$ as
	\begin{align*}
		\# \PP^n(F) &= \sum_{i=1}^{n+1} (-1)^{i-1} \binom{n+1}{k}(\kappa-1)^{i-1}\kappa^{n-i+1}\\
		&= -\frac{(\kappa-(\kappa-1))^{n+1} - \kappa^{n+1}}{\kappa-1}\\
		&= \frac{\kappa^{n+1} - 1}{\kappa-1} = \kappa^n + \dots + \kappa + 1.
	\end{align*}
	Of course, $\kappa(\FF_q) = q,$ where $\FF_q$ denotes the finite field with $q$ elements, 
	and $\kappa(\fun ) = 1.$
\end{example}

\section{Affine Group PSchemes and Affine PGroup PSchemes}
\subsection{$\GG_a$}
	Put $K = \fun\angles{ x, y | \exists (x+y) }$ and let $R$ be the congruence $\angles{(x+y, 0)}.$
	Then put $G =  K/ R.$ If we put $t = [x] = -[y] \in G,$ then
	\[
		G = \Set{ a_0 + a_1t + \dots + a_r t^r | r\in \NN, a_0 =0\mbox{~or~}1, a_i \in \ZZ }.
	\]
	A cogroup structure on $G$ is given by 
	\begin{align*}
		&m\colon G\to G\otimes G~;~ t\mapsto 1\otimes t + t\otimes 1\\
		&e\colon G\to \fun ~;~t \mapsto 0\\
		&i\colon G\to G~;~t\mapsto -t.
	\end{align*}
	Then $\GG_a = \Hom_{{\alg}_{\fun}} (G, -)$ is the additive group.
\subsection{$\GG_m$}
	Put $K= \angles{ x, y }$ and let $R$ be the congruence $\angles{(ab, 1)}.$
	Then put $G = K/R.$ If we put $t = [x] \in G, $ then $G = \Set{ t^n | t\in \ZZ}.$
	A cogroup structure on $G$ is given by 
	\begin{align*}
		&m\colon G\to G\otimes G~;~ t\mapsto t\otimes t\\
		&e\colon G\to \fun ~;~t \mapsto 1\\
		&i\colon G\to G~;~t\mapsto t^{-1}.
	\end{align*}
	Then $\GG_m = \Hom_{{\alg}_{\fun}} (G, -)$ is the multiplicative group.
\subsection{$\GGLL_n$}
In this section, we construct an affine pgroup pscheme $\GGLL_n,$
which induces a $\grp$ valued functor when restricted to a `good' partial rings.

\subsubsection{Linear Algebra}
Let $A$ be a partial ring.
\begin{defn}[$A$-modules]
	An $A$-module is a partial monoid $M$ equipped with a bilinear action $A\times M\to M$ by $A.$
\end{defn}

For any natural number $n, A^n$ is an $A$-module in a natural way. 
Let $\varphi \colon A^n \to A^m$ be an $A$-module homomorphism.
Let $\set{e_1,\dots, e_n}$ and\\ $\set{f_1,\dots, f_m}$ be the canonical basis of $A^n, $
respectively. Suppose
$\varphi( e_j ) = \sum_{i=1}^m c_{ij} f_i.$
Let $(a_1,\dots, a_n)\in A^n$ be any element. 
Since $( a_1e_1, \dots, a_n e_n )$ is a summable $n$-tuple in $A^n,$
$( a_1\varphi(e_1), \dots, a_n \varphi(e_n) )$ is a summable $n$-tuple in $A^m.$
This implies that $( a_1 c_{1j} , a_2c_{2j}, \dots, a_n c_{nj} ) $ is a summable $n$-tuple in $A.$
Now, we put
\[
	A_{(n)} = \Set{ (c_1, \dots, c_n) \in A^n | 
		\begin{array}{l}
			(a_1 c_1 , \dots , a_n c_n) \mbox{~is a summable~}n\mbox{-tuple}\\
			\mbox{for any } a_1, \dots, a_n \in A
		\end{array}	
	}.
\]
If we think of $C = (c_{ij})$ as an $m\times n$ matrix, then we have shown above that 
each row of $C$ is an element of $A_{(n)}.$ 
The set of $m\times n$ matrices with this property is denoted by $M_{m,n}(A).$
Conversely, if we are given an $m\times n$
matrix $C = (c_{ij})\in M_{m,n}(A),$ we can define a $A$-module homomorphism 
$\varphi \colon A^n\to A^m$
by the usual matrix multiplication $\varphi(a_1,\dots, a_n) = C\,^t\!(a_1,\dots, a_n).$

If $m=n,$ matrix multiplication gives $M_n(A) = M_{n,n}(A)$ a non-commutative monoid structure.
The invertible elements of $M_n(A)$ constitute a group, which is denoted by $GL_n(A).$

We introduce a weaker version $M'_{m,n}(A)$ of $M_{m,n}(A),$ in which matrices have their rows in $A_n,$
instead of $A_{(n)}.$
If $m=n,$ matrix multiplication gives $M'_n(A) = M'_{n,n}(A)$, in this case, 
a non-commutative partial magma structure as is illustrated below.
If we put
\begin{align*}
	M &= M'_n(A), \mbox{~and}\\
	M_2 &= \Set{ ( (c_{ij}), (d_{ij}) ) | (c_{i1}d_{1j}, \dots, c_{in}d_{nj}) \in A_n \mbox{~for each~} i, j },
\end{align*}
then a multiplication $M_2 \to M$ is given by the matrix multiplication. The unit matrix
gives a unit for the partial magma $M,$ which can be multiplied to any element of $M.$
As a definition of invertible matrix in $M'_n(A)$, we give the following one (hinted by the group pscheme 
construction given in a following section): 
a matrix $C \in M'_n(A)$ is invertible if there exists a matrix $C'\in M'_n(A)$ such that we can form
$CC'$ and $C'C$ in $M'_n(A)$ and $CC' = C'C = I_n,$ the unit matrix.
The invertible elements of $M_n(A)$ constitute a partial group, which is denoted by $GL'_n(A),$
with the following (ad hoc) definition of a partial group.

\begin{defn}[partial group]
	A partial group is a non-commutative 
	partial magma $G$ in which for every element $a\in G,$ there exists
	an element $b\in G$ such that $ab$ and $ba$ can be formed in $G$ and $ab=ba=1,$ where
	$1$ is the unit element of the partial magma $G.$	
\end{defn}

\begin{defn}[good partial ring]
	A partial ring $A$ is called good if $A_n = A_{(n)}.$ 
\end{defn}

If $A$ is a good partial ring, $GL_n(A) = GL'_n(A).$
Note that commutative monoids and commutative rings are good partial rings.

\subsubsection{Partial Cogroups}

If $\calC$ is a category with finite coproducts, then we can define a partial cogroup in $\calC.$
Let $I$ be an initial object of $\calC,$ and $\otimes $ be a binary coproduct in $\calC.$

\begin{defn}[Partial cogroup]
	A partial cogroup in $\calC$ is
	\begin{enumerate}[label = \arabic*)]
		\item an object $G,$
		\item an object $H$ and an epimorphism $j\colon G\otimes G\to H,$
		\item a morphism $e\colon G\to I,$ called the counit, and epimorphisms
		$e_L\colon H \to I\otimes G$ and $e_R \colon H \to G\otimes I,$
		\item a morphism $m\colon G\to H,$ called the comultiplication, and
		\item a morphism $i\colon G\to G,$ called the inverse, and morphisms
		$i_L, i_R \colon H \to G$
	\end{enumerate}
	which makes the following diagrams commute :
		\begin{enumerate}[label = \alph*)]
		\item 
		\[
			\begin{tikzcd}
				& G\otimes G \ar{ld}[above]{e\otimes id}\ar[two heads]{d}{j}\ar{rd}{id\otimes e}\\
				I\otimes G\ar{d}[left]{(id,!)} & H \ar[two heads]{l}{e_L}\ar[two heads]{r}[below]{e_R} & G\otimes I\ar{d}{(!, id)}\\
				G & G \ar[equal]{l}\ar{u}{m}\ar[equal]{r} & G
			\end{tikzcd}
		\]
		\item 
		\[
			\begin{tikzcd}
				& G\otimes G\ar{ld}[above]{i\otimes id}\ar{rd}{id\otimes i}\ar[two heads]{d}{j}\\
				G & H\ar{l}{i_L}\ar{r}[below]{i_R}  & G\\
				I\ar{u}{!} & G\ar{u}{m}\ar{l}{e}\ar{r}[below]{e} & I\ar{u}{!}.
			\end{tikzcd}
		\]
	\end{enumerate}
\end{defn}

\subsubsection{$\GGLL_n$}
	Let $\NN[x_{ij}, y_{ij}\,(1\leq i,j \leq n) ]$ be the semiring of polynomials of
	$2n^2$ indeterminates $x_{ij}, y_{ij}\,(1\leq i,j \leq n).$
	Consider $n\times n$ matrices $X = (x_{ij}), Y = (y_{ij}), Z = XY = (z_{ij})$ and $W = YX = (w_{ij}).$
	Let $K$ be the subset of $\NN[x_{ij}, y_{ij}\,(1\leq i,j \leq n) ]$ 
	consisting of $4n$ elements
	\[
		\begin{array}{l}
				x_i = x_{i1}+\dots +x_{in} \,(1\leq i\leq n),\\
				y_i = y_{i1}+\dots +y_{in} \,(1\leq i\leq n),\\		
				z_i = z_{i1}+\dots +z_{in} \,(1\leq i\leq n)\mbox{~and~}\\		
				w_i = w_{i1}+\dots +w_{in} \,(1\leq i\leq n).
		\end{array}
	\]
	We put $G' = \fun \angles{x_{ij}, y_{ij}\,(1\leq i,j \leq n) | \exists t, \forall t\in K}.$
	Let $Q$ be the smallest congruence on $G'$ which contains $2n^2$ pairs
	$(z_{ij} ,\delta_{ij})$ and $(w_{ij}, \delta_{ij})\,(1\leq i, j\leq n).$
	Then we put $G=G'/Q.$
	
	Next, let $N = \NN[x_{ij}, y_{ij}, x'_{ij}, y'_{ij}\,(1\leq i,j \leq n) ]$ be the semiring of polynomials of
	$4n^2$ indeterminates $x_{ij}, y_{ij}, x'_{ij}, y'_{ij}\,(1\leq i,j \leq n).$
	Consider $n\times n$ matrices 
	\begin{align*}
		&X = (x_{ij}), Y = (y_{ij}), Z = XY = (z_{ij}), W = YX = (w_{ij}), \\
		&X' = (x'_{ij}), Y' = (y'_{ij}), Z' = X'Y' = (z'_{ij}), W' = Y'X' = (w'_{ij}), \\
		&S = XX' = (s_{ij}), T = Y'Y = (t_{ij}), U = ST = (u_{ij}), V = TS = (v_{ij}).
	\end{align*}
	We put
	\[
		L = \Set{ x_i, y_i, z_i, w_i, x'_i, y'_i, z'_i, w'_i, s_i, t_i, u_i, v_i | 1\leq i \leq n},
	\]
	where $*_i$ denotes the sum of $i$-th row of a matrix indicated by the capital of the same letter $*.$
	We put $H' = \fun \angles{x_{ij}, y_{ij},x'_{ij}, y'_{ij}\,(1\leq i,j \leq n) | \exists t, \forall t\in L}.$
	Let $R$ be the smallest congruence on $H'$ which contains $6n^2$ pairs
	$(z_{ij} ,\delta_{ij}),$ $(w_{ij}, \delta_{ij}),$ $(z'_{ij} ,\delta_{ij}),$  $(w'_{ij}, \delta_{ij}),$
	$(u_{ij} ,\delta_{ij})$ and $(v_{ij}, \delta_{ij})\,(1\leq i, j\leq n).$ Then we put $H = H'/R.$
	
	The following list of maps define partial ring homomorphisms which give
	$G$ a partial cogroup structure:
	\begin{align*}
		& j\colon G\otimes G \to H~&;~ & j(x_{ij}\otimes 1) = x_{ij}, j(y_{ij}\otimes 1) = y_{ij},\\
		& & &  j(1\otimes x_{ij}) = x'_{ij}, j(1\otimes y_{ij}) = y'_{ij},\\
		& e\colon G \to \fun~&;~& e(x_{ij}) = e(y_{ij}) = \delta_{ij},\\
		& e_L\colon H \to \fun\otimes G~&;~& e_L(x_{ij}) = e_L(y_{ij}) = \delta_{ij}\otimes 1,\\
		& &  & e_L(x'_{ij}) = 1\otimes x_{ij}, e_L(y'_{ij}) = 1\otimes y_{ij},\\
		& e_R\colon H \to \fun\otimes G~&;~& e_R(x_{ij}) = x_{ij}\otimes 1, e_R(y_{ij}) = y_{ij}\otimes 1,\\
		& &  & e_R(x'_{ij}) = e_R(y'_{ij}) = 1\otimes \delta_{ij},\\
		& m\colon G\to H~&;~& m(x_{ij}) = s_{ij}, m(y_{ij}) = t_{ij},\\
		& i\colon G\to G~&;~& i(x_{ij}) = y_{ij}, i(y_{ij}) = x_{ij},\\
		& i_L\colon H\to G~&;~& i_L(x_{ij}) = y_{ij}, e_L(y_{ij}) = x_{ij},\\
		& &  & e_L(x'_{ij}) = x_{ij}, e_L(y'_{ij}) = y_{ij},\\
		& i_R\colon H\to G~&;~&  i_R(x_{ij}) = x_{ij}, e_R(y_{ij}) = y_{ij},\\
		& &  & e_R(x'_{ij}) = y_{ij}, e_R(y'_{ij}) = x_{ij}.
	\end{align*}
	
	\begin{thm}
		There exists a representable functor
		$\GGLL_n \colon \pring \to \pgrp$
		from the category of partial rings to the category of partial groups
		which enjoys the following properties:
		\begin{enumerate}
			\item its restriction to the category of good partial rings factors through 
			$\grp, $ the category of groups.
			\item $\GGLL_n (A)$ is 
			the group of $n$-th general linear group with entries in $A$, if
			$A$ is a  commutative rings with 1, and
			\item $\GGLL_n (\fun) = \frakS_n$ is $n$-th symmetric group.
		\end{enumerate}	 
	\end{thm}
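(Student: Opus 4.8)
The plan is to exhibit the functor as $\GGLL_n = \Hom_{\pring}(G, -)$, which is representable by construction, and then to read off both its values and its partial-group structure directly from the presentations of $G$ and $H$, using Proposition \ref{prop:pring_hom_by_generators} throughout.

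First I would identify $\Hom_{\pring}(G, A)$ naturally in $A$ with the partial group $GL'_n(A)$. By Proposition \ref{prop:pring_hom_by_generators}, a homomorphism $G' \to A$ is the same datum as matrices $X = (\varphi(x_{ij}))$ and $Y = (\varphi(y_{ij}))$ over $A$ for which the row sums $x_i, y_i, z_i, w_i$ of $X, Y, XY, YX$ can all be calculated in $A$; factoring through the congruence $Q = \angles{(z_{ij},\delta_{ij}),(w_{ij},\delta_{ij})}$ imposes exactly $XY = YX = I_n$, since $\varphi(z_{ij}) = (XY)_{ij}$ and $\varphi(w_{ij}) = (YX)_{ij}$. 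Hence $\Hom_{\pring}(G, A)$ is the set of pairs $(X, Y)$ of mutually inverse matrices with summable rows, i.e. $GL'_n(A)$, the inverse $Y$ being uniquely determined by $X$.

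Next I would verify that the displayed maps $j, e, e_L, e_R, m, i, i_L, i_R$ are well-defined partial ring homomorphisms and make $G$ a partial cogroup, so that $\Hom(G, -)$ lands in $\pgrp$. The key is the parallel reading of $H$: by the same argument $\Hom_{\pring}(H, A)$ is the set of quadruples $(X, Y, X', Y')$ with $X, X'$ invertible and, in addition, $S = XX'$ invertible with inverse $T = Y'Y$, which is precisely what the congruence $R$ enforces through the pairs $(u_{ij},\delta_{ij})$ and $(v_{ij},\delta_{ij})$. Thus $\Hom(H, A)$ is the set of multipliable pairs $(X, X') \in GL'_n(A)_2$, the epimorphism $j$ dualizes to the inclusion $GL'_n(A)_2 \hookrightarrow GL'_n(A) \times GL'_n(A) = \Hom(G \otimes G, A)$, and $m$ (sending $x_{ij} \mapsto s_{ij}$, $y_{ij} \mapsto t_{ij}$) dualizes to matrix multiplication. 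Checking that each map respects its summability list and congruence is routine, and the two commuting diagrams in the definition of a partial cogroup then dualize to the unit and inverse axioms of the partial group $GL'_n(A)$, with $e, e_L, e_R$ producing the unit $I_n$ and $i, i_L, i_R$ producing $X \mapsto Y = X^{-1}$. I expect this step, namely tracking exactly which products are formable so that $H$ correctly captures the domain of the partial multiplication, to be the main obstacle.

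Finally I would deduce the three properties. For (1), if $A$ is good then $A_n = A_{(n)}$, so each row of $X \in GL'_n(A)$ lies in $A_{(n)}$ and the product $XX'$ is automatically formable; hence multiplication is total, $GL'_n(A) = GL_n(A)$ is a genuine group, and the restricted functor factors through $\grp$. Property (2) is then immediate, since a commutative ring with $1$ is good and every summability condition is vacuous, so $\GGLL_n(A) = GL_n(A)$ is the usual general linear group. For (3), $\fun$ is good (the summable and strongly summable $n$-tuples in $\fun$ coincide, each allowing at most one nonzero entry), so a matrix in $M'_n(\fun) = M_n(\fun)$ has at most one $1$ in every row; invertibility forces $X$ and $Y = X^{-1}$ to be mutually inverse permutation matrices, whence $\GGLL_n(\fun)$ is the group of $n \times n$ permutation matrices, that is, $\frakS_n$.
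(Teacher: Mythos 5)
Your proposal is correct and is essentially the paper's own argument: the paper states this theorem with no written proof, the intended justification being exactly the construction that precedes it---the representing object $G$, the object $H$ carrying the congruence $R$, and the cogroup data $j, e, e_L, e_R, m, i, i_L, i_R$---so your reconstruction via $\GGLL_n = \Hom_{\pring}(G,-)$, dualization of the partial cogroup structure, totality of the multiplication over good partial rings, and permutation matrices over $\fun$ supplies strictly more detail than the paper itself does. The one point to watch is your blanket identification $\Hom_{\pring}(G,A)\cong GL'_n(A)$ for arbitrary $A$: the summability list $K$ forces calculability of the \emph{full} row sums of $XY$ and $YX$ (all $n^2$ monomials at once), which is stronger than entrywise formability in $M'_n(A)$, and uniqueness of the inverse $Y$ requires regroupings such as $Y(XY')=(YX)Y'$ whose summability can fail in a general partial ring---but this does not affect the theorem, since you only use the identification for good partial rings, commutative rings with $1$, and $\fun$, where it does hold.
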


	\bibliographystyle{plain}
	\bibliography{reference}
\end{document}